\documentclass[reqno]{amsart}
\usepackage{graphicx,graphics}
\usepackage{subfig}

\newcommand{\D}{\mathcal{D}}

\newcommand{\Real}{\mathbb R}
\newcommand{\abs}[1]{\left\vert#1\right\vert}

\newtheorem{lem}{Lemma}
\newtheorem{thm}{Theorem}

\newtheorem{rem}{Remark}
\newtheorem{defn}{Definition}

\numberwithin{thm}{section}
\numberwithin{lem}{section}
\numberwithin{coll}{section}
\numberwithin{rem}{section}
\numberwithin{exm}{section}
\numberwithin{prop}{section}
\numberwithin{equation}{section}


\vfuzz2pt 
\hfuzz2pt 

\setcounter{page}{1}

\setlength{\textheight}{22cm}

\setlength{\textwidth}{14cm}

\setlength{\oddsidemargin}{1cm}

\setlength{\evensidemargin}{1cm}

\pagestyle{myheadings}

\thispagestyle{empty}

\numberwithin{equation}{section}

\begin{document}

\centerline {\textsc {\large On convergence of entropy of distribution functions}} 
\centerline {\textsc {\large in the max domain of attraction of max stable laws}} 
\vspace{0.5in}

\begin{center}
   Sreenivasan Ravi\footnote{Corresponding author: ravi@statistics.uni-mysore.ac.in, sreenivasanravi@yahoo.com} and
   Ali Saeb\footnote{ali.saeb@gmail.com}\\
   Department of Studies in Statistics\\
University of Mysore\\
Manasagangotri, Mysore 570006, India
\end{center}

\vspace{1in}



\noindent {\bf Abstract:}	Max stable laws are limit laws of linearly normalized partial maxima of independent, identically distributed (iid) random variables (rvs). These are analogous to stable laws which are limit laws of normalized partial sums of iid rvs. In this paper, we study entropy limit theorems for distribution functions in the max domain of attraction of max stable laws under linear normalization. More specifically, we study the problem of convergence of the Shannon entropy of linearly normalized partial maxima of iid rvs to the corresponding limit entropy when the linearly normalized partial maxima converges to some nondegenerate rv. We are able to show that the Shannon entropy not only converges but, in fact, increases to the limit entropy in some cases. We discuss several examples. We also study analogous results for the $k$-th upper extremes.

\vspace{0.5in}

\vspace{0.2in} \noindent {\bf Keywords:} Shannon Entropy, Entropy convergence, Relative entropy, Max stable laws, Max domains of attraction, Extreme order statistics.

\vspace{0.5in}

\vspace{0.2in} \noindent {\bf MSC 2010 classification:} 60F10
\newpage
\section{Introduction, Definitions and Preliminary Lemmata}
The limit laws of linearly normalized partial maxima $M_n=X_1\vee\ldots\vee X_n$ of independent and identically distributed (iid) random variables (rvs) $X_1,X_2,\ldots,$ with common distribution function (df) $F,$ namely,
\begin{equation}\label{Introduction_e1}
	\lim_{n\to\infty}P(M_n\leq a_nx+b_n)=\lim_{n\to\infty}F^n(a_nx+b_n)=G(x),\;\;x\in \mathcal{C}(G),
\end{equation}
	where, $a_n>0,$ $b_n\in\Real,$ are norming constants, $G$ is a non-degenerate df, $\mathcal{C}(G)$ is the set of all continuity points of $G,$ are called max stable laws. If, for some non-degenerate df $G,$ a df $F$ satisfies (\ref{Introduction_e1}) for some norming constants $a_n>0,$ $b_n\in\Real,$ then we say that $F$ belongs to the max domain of attraction of $G$ under linear normalization and denote it by $F\in \mathcal{D}(G).$ Limit dfs $G$ satisfying (\ref{Introduction_e1}) are the well known extreme value types of distributions, or max stable laws, namely,
	\begin{eqnarray*}
			\text{the Fr\'{e}chet law:} & \Phi_\alpha(x)  = \left\lbrace	
							\begin{array}{l l}
							 0, &\;\;\; x< 0, \\
							 \exp(-x^{-\alpha}), &\;\;\; 0\leq x;\\
							 \end{array}
							 \right. \\					
		\text{the Weibull law:} & \Psi_\alpha(x) =  \left\lbrace
						\begin{array}{l l} \exp(- |x|^{\alpha}), & x<0, \\
						1, & 0\leq x;
						\end{array}\right. \\
		\text{and the Gumbel law:} & \Lambda(x) = \exp(-\exp(-x));\;\;\;\;\; x\in\Real;
	\end{eqnarray*}
$\alpha>0$ being a parameter, with respective probability density functions (pdfs),
	\begin{eqnarray*}
		\text{the Fr\'{e}chet density:} & \phi_\alpha(x) =  \left\lbrace	
							\begin{array}{l l}
							 0, &\;\;\; x \leq 0, \\
							 \alpha x^{-(\alpha+1)}e^{-x^{-\alpha}}, &\;\;\; 0 < x;\\
							 \end{array}
							 \right. \\
		\text{the Weibull density:} & \psi_\alpha(x) = \left\lbrace
						\begin{array}{l l} \alpha |x|^{\alpha-1}e^{-|x|^{\alpha}}, & x<0, \\
						0, & 0 \leq x;
						\end{array}\right. \\
		\text{and the Gumbel density:} & \lambda(x) = e^{-x}e^{-e^{-x}},\;\; x\in\Real.
	\end{eqnarray*}
Note that (\ref{Introduction_e1}) is equivalent to
\begin{eqnarray}\label{Introduction_e1.2}
\lim_{n\to\infty}n\{1-F(a_nx+b_n)\}=-\log G(x), \; x \in \{y: G(y) > 0\}.
\end{eqnarray}
Criteria for $F\in \mathcal{D}(G)$ are well known (see, for example, Galambos, 1987; Resnick, 1987; Embrechts et al., 1997).

Let $X_{1:n} \leq X_{2:n} \leq \ldots \leq X_{n:n}$ denote the order statistics from a random sample $\;\{X_1, \ldots, X_n\} \;$ from $\;F.\;$ Let the df of the $k$-th extreme be $\;G_{k:n}(x) = P\left( X_{n-k+1:n} \leq a_n x + b_n \right),$ $k = 1, 2, \ldots, \;$ fixed, and let $K_k(x) = \lim_{n\to\infty} G_{k:n}(x).\; $ Then it is well known that if $F \in \mathcal{D}(G)$ for some non-degenerate df $G$ so that (\ref{Introduction_e1}) holds for some norming constants $a_n, b_n,$ then 
\begin{eqnarray}\label{Introduction_e1.3}
K_k(x) = \left\{ \begin{array}{cl} G(x) \sum_{i=0}^{k-1} \frac{(- \log G(x))^i}{i!}, & x \in \{y: G(y) > 0\}, \\ 0, & \text{otherwise}. 
\end{array} \right.
\end{eqnarray}
 We refer to Galambos (1987) and Resnick (1987) for results used in this article. For quick reference, some of the results used in this article are given in Appendix \ref{more results}.

	The study of entropy and relative entropy (as defined later) is important in information theory. Gnedenko and Korolev (1996) suggest that a df that maximizes entropy within a class of dfs often turn out to have favourable properties; for example, the normal df has maximum entropy in the class of dfs having a specified variance. Barron (1986) discusses the central limit theorem in the sense of relative entropy. Johnson (2006) is a good reference to the application of information theory to limit theorems, especially the central limit theorem. In this article, our main interest is to investigate conditions under which the entropy or Shannon's entropy of the normalized partial maxima of iid rvs converges to the corresponding limit entropy. We first look at this problem through some illustrative examples, then at the general case and finally for the $k$-th extremes. Since entropies involve integrals, the proofs of our results here involve clever application of existing results on moment and density convergence  of normalized partial maxima and the dominated convergence theorem. We now give some definitions and preliminary results which will be used subsequently. 
	
\subsection{Some definitions and preliminary results} We will refer to Shannon's entropy as entropy in this article.

	Suppose that (\ref{Introduction_e1}) holds for some df $F$ and some max stable law $G.$ Let $f$ and $g$ respectively denote the pdfs of $F$ and $G.$ The rv $\left( \dfrac{M_n-b_n}{a_n} \right) $ has pdf given by
\begin{equation}\label{Introduction_e2}
	g_n(x)= na_nF^{n-1}(a_nx+b_n)f(a_nx+b_n),\;\;x\in\Real,\;\;n\geq 1.
\end{equation}
\begin{defn}	\label{Definition 1} 
	The entropy of $g_n$ is given by 
\begin{eqnarray*}
	H(g_n) =   -\int_A g_n(x)\log g_n(x)dx, \;\;\mbox{where}\;\;A=\{x\in\Real:g_n(x)>0\}.
	\end{eqnarray*}
\end{defn}

\begin{rem} We have 	
\begin{eqnarray}
	H(g_n)& =  & -\int_A g_n(x)\log g_n(x)dx, \;\;\mbox{where}\;\;A=\{x\in\Real:g_n(x)>0\},\label{Introduction_e3}\\
 & = & - \int_A n(n-1) a_nf(a_nx+ b_n) F^{n-1}(a_nx+b_n) \log F(a_nx+b_n) dx\nonumber\\
 && - \int_A n a_n f(a_nx+ b_n) F^{n-1}(a_nx+b_n) \log(n a_n f(a_nx+b_n)) dx,\nonumber \\
 & = & - (I_1(n) + I_2(n)), \; \mbox{say. We then have} \nonumber\\
 \lim_{n \rightarrow \infty} H(g_n) & = & 1 - \lim_{n\rightarrow \infty} I_2(n), \label{1.5}
\end{eqnarray}
in view of the following lemma. \end{rem} 

\begin{lem}\label{Lemma1}
\begin{eqnarray}
I_1(n) = - \frac{(n-1)}{n}, \, n \geq 1;\;\; \text{and } I_1(n)\text{ decreases to -1 as }n\to\infty.\nonumber
\end{eqnarray}
\end{lem}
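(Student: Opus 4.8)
\bigskip
\noindent\textbf{Proof proposal.} The plan is to evaluate $I_1(n)$ in closed form and then read off both assertions. First I would handle $n=1$ separately: there the coefficient $n(n-1)$ vanishes, so $I_1(1)=0=-(1-1)/1$, as claimed. For $n\ge 2$, I would substitute $y=a_nx+b_n$ in the integral defining $I_1(n)$; this cancels the norming constants and gives
\[
I_1(n)=\int_{\widetilde A} n(n-1)\,f(y)\,F^{n-1}(y)\,\log F(y)\,dy,
\]
where $\widetilde A=\{y\in\Real:\ F(y)>0,\ f(y)>0\}$.

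Next, since $F$ possesses a pdf it is absolutely continuous, hence continuous, so $F(X)$ is uniformly distributed on $(0,1)$ whenever $X$ has df $F$; equivalently, the image of the measure $f(y)\,dy$ under $y\mapsto u=F(y)$ is Lebesgue measure on $(0,1)$. The integrand above vanishes off $\widetilde A$ (it carries the factor $f(y)$, and where $F(y)=0$ it may be set to $0$, consistently with $\lim_{u\downarrow 0}u^{n-1}\log u=0$ for $n\ge 2$), and $u\mapsto u^{n-1}\log u$ is bounded on $(0,1)$ for $n\ge 2$; so this change of variables is legitimate and yields
\[
I_1(n)=\int_0^1 n(n-1)\,u^{n-1}\log u\,du.
\]

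Finally I would compute $\int_0^1 u^{n-1}\log u\,du$ by integration by parts, taking $u^n/n$ as an antiderivative of $u^{n-1}$: the boundary term vanishes at both endpoints, and what remains is $-\tfrac1n\int_0^1 u^{n-1}\,du=-1/n^2$. Hence $I_1(n)=n(n-1)\cdot(-1/n^2)=-(n-1)/n$ for every $n\ge 1$. Writing $I_1(n)=-1+1/n$, the difference $I_1(n+1)-I_1(n)=\tfrac1{n+1}-\tfrac1n<0$ shows the sequence strictly decreases, while $1/n\to 0$ gives $I_1(n)\to -1$. I anticipate no real obstacle; the one point deserving care is the justification of the substitution $u=F(y)$ when $F$ is not strictly increasing (so that $f$ vanishes on some intervals), which is exactly why I would phrase it through the probability integral transform $F(X)\sim\mathrm{Uniform}(0,1)$ rather than a naive change of variables, and one should also note that $u^{n-1}\log u$ is integrable on $(0,1)$ for $n\ge 2$, being bounded there.
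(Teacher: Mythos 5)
Your proposal is correct and follows essentially the same route as the paper: the change of variable $t=F(a_nx+b_n)$ (the probability integral transform) reduces $I_1(n)$ to $\int_0^1 n(n-1)t^{n-1}\log t\,dt=-(n-1)/n$, from which the monotone decrease to $-1$ is immediate. Your extra care with the $n=1$ case and with justifying the substitution when $F$ is not strictly increasing only adds rigour to the argument the paper gives.
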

\begin{proof} Making the change of variable $F(a_n x + b_n) = t,\;$ we get $\;a_n f(a_n x + b_n) dx = dt\;$ and
$$ I_1(n)  =  \int_{0}^{1} n (n-1) t^{n-1} \log t \,dt  =   - \frac{(n-1)}{n};$$
and the claim follows, since $I_1(n-1)\geq I_1(n).$
\end{proof}

\vspace{0.1in}
Now we state and prove a lemma of independent interest which will be used subsequently.
\begin{lem} \label{Lemma2}
If $Y_1,Y_2,\ldots$ are iid rvs having standard exponential distribution and $Z_n= Y_1 \vee \ldots \vee Y_n,$ then
\begin{eqnarray}
&& \lim_{n\to\infty}E\left(Z_n-\log n\right)=-\Gamma'(1)=\gamma,\nonumber
\end{eqnarray}
	where $\gamma$ is the Euler's constant.
\end{lem}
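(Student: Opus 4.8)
The plan is to evaluate $E(Z_n)$ in closed form and then pass to the limit. Since the $Y_i$ are iid standard exponential, $Z_n$ has df $(1-e^{-x})^n$ and pdf $n(1-e^{-x})^{n-1}e^{-x}$ on $(0,\infty)$, so that
\[
 E(Z_n)=\int_0^\infty x\,n(1-e^{-x})^{n-1}e^{-x}\,dx .
\]
First I would substitute $t=1-e^{-x}$, giving $e^{-x}\,dx=dt$ and $x=-\log(1-t)$, which reduces the integral to $E(Z_n)=-n\int_0^1 t^{n-1}\log(1-t)\,dt$.

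The next step is to compute $\int_0^1 t^{n-1}\log(1-t)\,dt$. Expanding $\log(1-t)=-\sum_{k\ge 1}t^k/k$ and integrating term by term---legitimate by the monotone convergence theorem, since the partial sums increase in absolute value---yields
\[
 \int_0^1 t^{n-1}\log(1-t)\,dt=-\sum_{k\ge 1}\frac{1}{k(n+k)}=-\frac1n\sum_{k\ge 1}\Bigl(\frac1k-\frac1{n+k}\Bigr)=-\frac{H_n}{n},
\]
where $H_n=\sum_{j=1}^n 1/j$ and the last (telescoping) sum is standard. Hence $E(Z_n)=H_n$. Equivalently one may recognise the integral as $\partial_s B(s,n)\big|_{s=1}$ for the Beta function $B(s,n)=\Gamma(s)\Gamma(n)/\Gamma(s+n)$, which gives $E(Z_n)=\gamma+\psi(n+1)$ with $\psi=\Gamma'/\Gamma$ the digamma function---the same answer, since $\psi(n+1)=H_n-\gamma$.

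Finally, $E(Z_n-\log n)=H_n-\log n$, whose limit as $n\to\infty$ is by definition the Euler constant $\gamma$; and $\gamma=-\Gamma'(1)$ is classical, obtained by differentiating $\Gamma(s)=\int_0^\infty t^{s-1}e^{-t}\,dt$ at $s=1$. The computations are entirely routine; the only point needing a little care is the term-by-term integration of the series for $\log(1-t)$, which is immediate here. Alternatively, the R\'enyi representation $Z_n\overset{d}{=}\sum_{k=1}^n E_k/k$ with $E_k$ iid standard exponential (the maximum of $n$ exponentials being distributed as a sum of independent exponentials with rates $n,n-1,\ldots,1$) gives $E(Z_n)=\sum_{k=1}^n 1/k=H_n$ directly, bypassing all integration.
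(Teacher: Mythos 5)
Your proof is correct. The route you take is genuinely different from the paper's primary argument: the paper cites the general moment-convergence result for maxima in a max-domain of attraction (Proposition 2.1(iii) of Resnick, 1987), verifying only the integrability condition $\int_{-\infty}^{0}\abs{x}\,d(1-e^{-x})<\infty$, and reads off the limit $-\Gamma'(1)$ from the Gumbel law; by contrast you compute $E(Z_n)$ exactly. Your substitution $t=1-e^{-x}$, the termwise integration of $-\sum_{k\ge1}t^k/k$ (correctly justified by monotone convergence since all terms have one sign), and the telescoping identity $\sum_{k\ge1}\bigl(\tfrac1k-\tfrac1{n+k}\bigr)=H_n$ are all sound, yielding the exact value $E(Z_n)=H_n$ for every $n$, from which $H_n-\log n\to\gamma$ is the definition of Euler's constant. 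This is more elementary and self-contained than the paper's appeal to general machinery, and it buys strictly more (an exact finite-$n$ formula rather than just a limit); the paper's approach buys generality, since the same citation handles arbitrary $F\in\D(\Lambda)$ elsewhere in the article. Your closing remark via the R\'enyi representation $Z_n\overset{d}{=}\sum_{k=1}^{n}Y_k/k$ is precisely the paper's own stated alternative proof, so on that branch the two arguments coincide.
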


\begin{proof} As is well known, we have
\[\lim_{n\to\infty}P(Z_n-\log n\leq v)=\lim_{n\to\infty}\left(1-e^{-(v+\log n)}\right)^n=\Lambda(v),\;\;v\in\Real.\]
From the moment convergence result of Proposition (2.1)-(iii) in Resnick (1987) (Theorem B.1), since $\int_{-\infty}^{0}|x|d(1-e^{-x})<\infty,$ we get $ \lim_{n\to\infty}E\left(Z_n-\log n\right)=-\Gamma'(1)=\gamma,$ the Euler's constant.

Alternatively, $Z_n$ has the same distribution as $\sum_{k=1}^{n}\frac{Y_k}{k}$ so that $E(Z_n) = $ $\sum_{k=1}^{n}\frac{1}{k}$ $= \log n + \gamma,$ and the result follows. \end{proof}

\vspace{0.2in}
\begin{defn} The relative entropy of $g_n$ with respect to pdf $g$ is given by
\begin{eqnarray*}
D(g_n\|g) = \int_A g_n(x)\log\left(\dfrac{g_n(x)}{g(x)}\right)dx,\;\;A=\{x\in\Real: g_n(x)>0\} \end{eqnarray*} \end{defn}
\begin{rem} We have 
\begin{eqnarray}\label{Introduction_e4}
	0\leq D(g_n\|g)&=&\int_A g_n(x)\log\left(\dfrac{g_n(x)}{g(x)}\right)dx,\;\;A=\{x\in\Real: g_n(x)>0\},\nonumber\\
	&=&\int_A g_n(x)\log g_n(x)dx-\int_A g_n(x)\log g(x)dx,\nonumber\\
	&=&-H(g_n)-\int_A na_nf(a_nx+b_n)F^{n-1}(a_nx+b_n)\log g(x)dx,\nonumber\\
	&=&-H(g_n)+\Delta_g(g_n),\;\;\text{say,} \end{eqnarray} and we have \begin{eqnarray*}
	\lim_{n \rightarrow \infty}D(g_n\|g) & = & -H(g)+\lim_{n\to\infty}\Delta_{g}(g_n)\text{ if}\; \lim_{n\to\infty} H(g_n) = H(g), \\ 
& = & 0, \;\;\text{if in addition,} \;\; \lim_{n\to\infty}\Delta_g(g_n)=H(g). \nonumber
\end{eqnarray*}
\end{rem}

The entropies of the extreme value distributions are given in the next lemma without proof as the calculations of these are straight forward.
\begin{lem}
The entropy of
\begin{itemize}
\item[(i)] Fr\'{e}chet law: $H(\phi_{\alpha})=-\log\alpha+\frac{\alpha+1}{\alpha}\gamma+1;$
\item[(ii)] Weibull law: $ H(\psi_{\alpha})=-\log\alpha+\frac{\alpha-1}{\alpha}\gamma+1;$
\item[(iii)] Gumbel law: $H(\lambda)=\gamma+1.$
\end{itemize}
	\end{lem}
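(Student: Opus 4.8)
The plan is to compute each entropy $H(g)=-\int_A g(x)\log g(x)\,dx$ directly from the closed form of $\log g$, reducing every resulting integral to a moment of a standard exponential rv through an appropriate power transformation; the identity $\int_0^\infty e^{-w}\log w\,dw=\Gamma'(1)=-\gamma$ (already used in the proof of Lemma \ref{Lemma2}) then finishes each case.

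For the Gumbel law, $\log\lambda(x)=-x-e^{-x}$, so $H(\lambda)=\int_{\Real}x\,\lambda(x)\,dx+\int_{\Real}e^{-x}\lambda(x)\,dx$. Writing $X\sim\Lambda$, the substitution $W=e^{-X}$ shows $W$ is standard exponential and $X=-\log W$, so the first integral equals $E(X)=-E(\log W)=\gamma$ and the second equals $E(e^{-X})=E(W)=1$; hence $H(\lambda)=\gamma+1$.

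For the Fr\'echet law, $\log\phi_\alpha(x)=\log\alpha-(\alpha+1)\log x-x^{-\alpha}$ on $(0,\infty)$, so $H(\phi_\alpha)=-\log\alpha+(\alpha+1)E(\log X)+E(X^{-\alpha})$ with $X\sim\Phi_\alpha$. The key point is that $W:=X^{-\alpha}$ is standard exponential (since $P(X^{-\alpha}>t)=P(X<t^{-1/\alpha})=e^{-t}$ for $t>0$); hence $E(X^{-\alpha})=E(W)=1$ and $E(\log X)=-\frac1\alpha E(\log W)=\frac{\gamma}{\alpha}$, giving $H(\phi_\alpha)=-\log\alpha+\frac{\alpha+1}{\alpha}\gamma+1$. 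The Weibull case is identical in spirit: from $\log\psi_\alpha(x)=\log\alpha+(\alpha-1)\log|x|-|x|^\alpha$ on $(-\infty,0)$ one gets $H(\psi_\alpha)=-\log\alpha-(\alpha-1)E(\log|X|)+E(|X|^\alpha)$ for $X\sim\Psi_\alpha$, and now $|X|^\alpha$ is standard exponential, so $E(|X|^\alpha)=1$ and $E(\log|X|)=\frac1\alpha E(\log W)=-\frac{\gamma}{\alpha}$, yielding $H(\psi_\alpha)=-\log\alpha+\frac{\alpha-1}{\alpha}\gamma+1$.

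There is no real obstacle here; the computation is elementary, as the statement itself notes. The only care needed is to confirm that each of the separated integrals is absolutely convergent (so that splitting $H(g)$ into a $\log$-moment plus a power-moment is legitimate — these are standard facts about the extreme value laws) and to spot the power transformation linking each law to the standard exponential. Accordingly I would record the three short computations above, carrying the Fr\'echet case out in full and dispatching the Weibull case with ``by the same argument.''
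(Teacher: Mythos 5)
Your computations are correct: in each case the power transformation ($W=e^{-X}$, $W=X^{-\alpha}$, $W=|X|^{\alpha}$) does carry the limit law to a standard exponential, and the identity $\int_0^\infty e^{-w}\log w\,dw=-\gamma$ then yields exactly the stated entropies. The paper deliberately omits any proof of this lemma ("the calculations of these are straight forward"), and your argument is precisely the intended routine computation --- indeed it uses the same substitutions ($x^{-\alpha}=u$, $(-x)^{\alpha}=u$, $e^{-x}=u$) that the paper employs in the proofs of Theorem~\ref{TPhi} and Lemma~\ref{TPhi_orderEnt}, so there is nothing to compare beyond noting that your write-up supplies the omitted details correctly.
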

	We shall denote the left extremity of df $F$ by $l(F) = \inf\{x: F(x) > 0\} \geq - \infty $ and the right extremity of $F$ by $r(F) = \sup \{x: F(x) < 1\} \leq \infty.$ In the next section we give our main results, followed by a section on Proofs. Wherever the proof is similar, we omit the steps and refer to an earlier proof. Finally, we give two appendices containing illustrative graphs and results used in this article.
	
\section{Main Results}
Our first results consider the problem through some illustrative and interesting examples. Though these follow from the general results given later, the proofs of these results are quite different from those of the general results which are stated in the second theorem below. The third theorem below gives results for the $k$-th extremes. In the case of the $k$-th extremes, we do not discuss the monotonicity of the convergence in this article. 
\begin{thm}\label{Thmexm_ent} If $F$ is the
\begin{enumerate}
\item[ (a)] Pareto$(\alpha)$ df so that $F(x) = \left\{ \begin{array}{cl} 0, & \; x \leq 0, \\ 1 - \frac{1}{x^{\alpha}}, & \; 0 < 1; \alpha > 0,\end{array}\right.$ and $\lim_{n \rightarrow \infty} F^n(n^{\frac{1}{\alpha}}x)$ $= \Phi_{\alpha}(x), $ $x \in R;$ then $H(g_n) \uparrow H(\phi_\alpha)$ with $n\;$ and $\;\lim_{n\to\infty}D(g_n\|\phi_\alpha)=0.$
\item[ (b)] uniform df over $(0, 1),$ so that $\lim_{n \rightarrow \infty} F^n(\frac{1}{n}x + 1)$ $= \Psi_{1}(x), $ $x \in R;$ then $H(g_n) \uparrow H(\psi_1)\;$ with $n\;$ and $\;\lim_{n\to\infty}D(g_n\|\psi_1)=0.$
\item[ (c)] standard exponential df so that $\lim_{n \rightarrow \infty} F^n(x + \log n)$ $= \Lambda(x), $ $x \in R;$ then $H(g_n) \uparrow H(\lambda)\;$ with $n\;$ and $\;\lim_{n\to\infty}D(g_n\|\lambda)=0.$
\item[ (d)] standard normal df so that $\lim_{n \rightarrow \infty} F^n(a_n x + b_n)$ $= \Lambda(x), $ $x \in R;$, with $a_n$ and $b_n$ as given in the proof, then $\lim_{n \rightarrow \infty} H(g_n) = H(\lambda)\;$ and $\lim_{n\to\infty}D(g_n\|\lambda)=0.$
\end{enumerate}
\end{thm}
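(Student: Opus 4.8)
The plan is to lean on the decomposition $H(g_n)=-(I_1(n)+I_2(n))$ from the Remark above together with Lemma \ref{Lemma1}, which fixes $-I_1(n)=(n-1)/n$, so that the whole problem collapses onto the single functional
\[
 I_2(n)=\int_A g_n(x)\log\bigl(na_nf(a_nx+b_n)\bigr)\,dx=E\bigl[\log\bigl(na_nf(a_nY_n+b_n)\bigr)\bigr],\qquad Y_n=\tfrac{M_n-b_n}{a_n}.
\]
For the relative entropy I would pair this with the identity $D(g_n\|g)=-H(g_n)+\Delta_g(g_n)$, $\Delta_g(g_n)=-E[\log g(Y_n)]$, from (\ref{Introduction_e4}). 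The whole point is that in (a)--(c) the factor $na_nf(a_nx+b_n)$ collapses, on the support of $g_n$, to a simple $n$-free expression, so that $I_2(n)$ and $\Delta_g(g_n)$ turn into explicit linear functionals of one or two moments of $Y_n$, and those moments can be written down exactly.

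For (a), $na_nf(a_nx)=\alpha x^{-(\alpha+1)}$ on $x>n^{-1/\alpha}$, whence $I_2(n)=\log\alpha-(\alpha+1)E[\log Y_n]$; since $\alpha\log M_n$ has the same distribution as $Z_n=Y_1\vee\cdots\vee Y_n$ with standard exponential $Y_i$, the identity $E(Z_n)=\sum_{k=1}^n k^{-1}$ (cf.\ the proof of Lemma \ref{Lemma2}) gives $E[\log Y_n]=\alpha^{-1}\bigl(\sum_{k=1}^n k^{-1}-\log n\bigr)$ and hence the closed form $H(g_n)=\frac{n-1}{n}-\log\alpha+\frac{\alpha+1}{\alpha}\bigl(\sum_{k=1}^n k^{-1}-\log n\bigr)\to 1-\log\alpha+\frac{\alpha+1}{\alpha}\gamma=H(\phi_\alpha)$. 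For (b), $na_nf\equiv1$, so $I_2(n)\equiv0$ and $H(g_n)=(n-1)/n\uparrow1=H(\psi_1)$ with nothing more to prove. For (c), $na_nf(x+\log n)=e^{-x}$, so $I_2(n)=-E[Y_n]$, and Lemma \ref{Lemma2} gives $E[Y_n]=\sum_{k=1}^n k^{-1}-\log n$, so $H(g_n)=\frac{n-1}{n}+\sum_{k=1}^n k^{-1}-\log n\to\gamma+1=H(\lambda)$. In each of (a)--(c) I would compute $\Delta_g(g_n)$ the same way; the only extra ingredient is one further moment of $Y_n$ — $E[Y_n]$ in (b), $E[Y_n^{-\alpha}]$ in (a), $E[e^{-Y_n}]$ in (c) — and each of these equals $\pm n/(n+1)$ because $1-M_n$, $M_n^{-\alpha}$ and $e^{-M_n}$ are all distributed as the minimum of $n$ iid uniform variates, giving the clean cancellation $D(g_n\|g)=n/(n+1)-(n-1)/n=1/\bigl(n(n+1)\bigr)\to0$.

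The monotone increase $H(g_n)\uparrow$ in (a)--(c) I would then read off by differencing the closed forms: the increment of $(n-1)/n$ is $1/\bigl(n(n+1)\bigr)$ and that of the harmonic remainder $\sum_{k=1}^n k^{-1}-\log n$ is $\frac1{n+1}-\log\frac{n+1}{n}<0$, and the elementary comparison $\int_0^{1/n}\frac{t(t+2)}{(1+t)^2}\,dt>\int_0^{1/n}\frac{t}{(1+t)^2}\,dt$ shows $\frac1{n(n+1)}>\log\frac{n+1}{n}-\frac1{n+1}$, which settles (b) and (c) immediately and (a) after the second increment is weighted by $(\alpha+1)/\alpha$. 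I expect balancing these two opposite-sign increments to be the fiddly step of the argument.

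Case (d) has a different character, since the norming constants — one takes $a_n=(2\log n)^{-1/2}$ and $b_n=(2\log n)^{1/2}-\bigl(\log\log n+\log4\pi\bigr)/\bigl(2(2\log n)^{1/2}\bigr)$ — make everything only asymptotically explicit. Here I would write $na_nf(a_nx+b_n)=\bigl(na_nf(b_n)\bigr)\exp\bigl(-a_nb_nx-\tfrac12a_n^2x^2\bigr)$, so that $I_2(n)=\log\bigl(na_nf(b_n)\bigr)-a_nb_n\,E[Y_n]-\tfrac12a_n^2\,E[Y_n^2]$; the Mills-ratio asymptotics behind the choice of $b_n$ give $na_nf(b_n)\to1$ and $a_nb_n\to1$, while $a_n^2\to0$, and the moment-convergence theorem for maxima in $\D(\Lambda)$ (Theorem B.1) gives $E[Y_n]\to\gamma$ and keeps $E[Y_n^2]$ bounded (indeed $E[Y_n^2]\to\gamma^2+\pi^2/6$), so $I_2(n)\to-\gamma$ and $H(g_n)\to1+\gamma=H(\lambda)$. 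For the relative entropy, $\Delta_\lambda(g_n)=E[Y_n]+E[e^{-Y_n}]\to\gamma+1$ by the same moment convergence — applied to $e^{-Y_n}$, whose integrability is controlled by the very rapidly vanishing left tail $F^{n-1}(a_nx+b_n)$ — so $D(g_n\|\lambda)\to0$. The main obstacle in (d) is precisely the justification of these moment convergences ($E[Y_n]\to\gamma$, $E[Y_n^2]=O(1)$, $E[e^{-Y_n}]\to1$), i.e.\ the uniform integrability / domination step, together with the bookkeeping that $\tfrac12a_n^2E[Y_n^2]$ and $\log\bigl(na_nf(b_n)\bigr)$ vanish.
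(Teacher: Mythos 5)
Your decomposition $H(g_n)=-(I_1(n)+I_2(n))$, the reduction of $I_2(n)$ and $\Delta_g(g_n)$ to moments of the exponential maximum $Z_n$, and the treatment of (d) via the expansion of $na_nf(a_nx+b_n)$ are exactly the paper's strategy; the difference is that you push (a)--(c) to exact closed forms using $E(Z_n)=\sum_{k=1}^{n}k^{-1}$ where the paper only passes to the limit through Lemma \ref{Lemma2} and Theorem \ref{Moment}. Your exact identities check out — in particular the observation that the log-moment terms cancel between $H(g_n)$ and $\Delta_g(g_n)$, leaving $D(g_n\|g)=\frac{n}{n+1}-\frac{n-1}{n}=\frac{1}{n(n+1)}$ in each of (a)--(c), is a genuine sharpening of the paper's limit statements. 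Part (d) is the paper's argument in different notation (the paper expands $(a_nx+b_n)^2/2$ rather than factoring out $f(b_n)$, and handles $E[e^{-Y_n}]$ by integration by parts plus DCT on $[-L,L]$); the domination/uniform-integrability issues you flag there are real, but they are present in the paper's own proof to the same degree.

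The genuine gap is the monotonicity in (a). Your closed form
\[
H(g_n)=\frac{n-1}{n}-\log\alpha+\frac{\alpha+1}{\alpha}\Bigl(\sum_{k=1}^{n}\frac{1}{k}-\log n\Bigr)
\]
gives the increment $\frac{1}{n(n+1)}-\frac{\alpha+1}{\alpha}\bigl(\log\frac{n+1}{n}-\frac{1}{n+1}\bigr)$, and both competing terms are of order $n^{-2}$: the first is $\sim n^{-2}$, the second $\sim\frac{\alpha+1}{2\alpha}n^{-2}$. Your integral comparison settles the unweighted case, hence (c) (and (b) is trivial), and a slightly finer version settles (a) for $\alpha\ge 1$; but for $0<\alpha<1$ one has $\frac{\alpha+1}{2\alpha}>1$, the increment is eventually (in fact immediately) negative, and indeed $H(g_n)-H(\phi_\alpha)\sim\frac{1-\alpha}{2\alpha n}>0$, so $H(g_n)$ approaches $H(\phi_\alpha)$ from above (e.g.\ $H(g_2)<H(g_1)$ already for $\alpha=\tfrac12$). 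So the ``fiddly balancing step'' you defer is not merely fiddly: it is impossible for $\alpha<1$, and your own formula disproves the claimed $H(g_n)\uparrow H(\phi_\alpha)$ in that range. (The paper's proof of (a) only establishes the limit and the relative-entropy convergence and never addresses monotonicity, so your computation exposes a defect in the theorem statement rather than one peculiar to your argument; but as a proof of the statement as written, the monotone-increase claim in (a) remains unproved for $\alpha\ge1$ until you supply the finer comparison, and is false for $\alpha<1$.)
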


\begin{rem} \begin{enumerate}
\item The results of the above theorem hold for location and scale versions of the dfs also.
\item In the case of (d) above, we notice that $H(g_n)$ does not increase to $H(\lambda)$ with $n,$ as seen in the graph \ref{normal_graph_f} in Appendix \ref{Graphs}.
\end{enumerate}
\end{rem}

\begin{thm}\label{TPhi} Let $F\in\D(G)$ for some nondegenerate df $G,$ with norming constants $a_n$ and $b_n$ so that (\ref{Introduction_e1}) holds and the df $\;F\;$ be absolutely continuous with nonincreasing pdf $\;f\;$ which is eventually positive, that is, $\;f(x) > 0\;$ for $x$ close to $r(F).$
\begin{enumerate}
\item[(a)]	If $\;l(F) > 0\;$ and $\;G=\Phi_\alpha\;$ for some $\;\alpha > 0,\;$ then $\;\lim_{n \rightarrow \infty} H(g_n) = H(\phi_\alpha)\;$ and $\;\lim_{n\to\infty}D(g_n\|\phi_\alpha)=0.$
\item[(b)]	If $\;G=\Psi_\alpha\;$ for some $\;\alpha > 0,\;$  then $\;\lim_{n \rightarrow \infty} H(g_n) = H(\psi_\alpha)\;$ and $\;\lim_{n\to\infty}D(g_n\|\psi_\alpha)=0.$
\item[(c)] If $\;G=\Lambda,\;$ then $\;\lim_{n \rightarrow \infty} H(g_n) = H(\lambda)\;$ and $\;\lim_{n\to\infty}D(g_n\|\lambda)=0.$
\end{enumerate}
Further, if $g_n(x)$ is nonincreasing for $\;n\;$ large, then the entropies above increase to their limits.
\end{thm}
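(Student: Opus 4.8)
The plan is to reduce both assertions to two moment‑type limit statements by means of the book‑keeping already set up in Section~1. By the Remark following Definition~1 (in particular the line preceding (\ref{1.5})) together with Lemma~\ref{Lemma1},
\[
H(g_n)=\frac{n-1}{n}-I_2(n),\qquad I_2(n)=\int_A g_n(x)\,\log\bigl(n a_n f(a_nx+b_n)\bigr)\,dx ,
\]
and by (\ref{Introduction_e4}), $D(g_n\|g)=-H(g_n)+\Delta_g(g_n)$ with $\Delta_g(g_n)=-\int_A g_n(x)\log g(x)\,dx$, where $g$ denotes whichever of $\phi_\alpha,\psi_\alpha,\lambda$ is relevant. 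Hence it suffices to prove
\[
\text{(I)}\quad \lim_{n\to\infty}I_2(n)=1-H(g),\qquad \text{(II)}\quad \lim_{n\to\infty}\Delta_g(g_n)=H(g),
\]
for then $H(g_n)\to 1-(1-H(g))=H(g)$ and $D(g_n\|g)\to -H(g)+H(g)=0$. Throughout, $W_n$ is a rv with density $g_n$ and $W$ one with density $g$, so $W_n$ converges in distribution to $W$ by (\ref{Introduction_e1}).

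The engine behind both limits is density convergence. The hypotheses on $f$ (absolutely continuous $F$, eventually positive and nonincreasing $f$) place $F$ in the von Mises class for $G$ — for $\Phi_\alpha$ and $\Psi_\alpha$ this is the monotone density theorem of regular variation, and for $\Lambda$ an analogous representation argument — and hence the densities converge: $g_n(x)\to g(x)$ at every interior point $x$ of the support of $g$ (Resnick, 1987; Appendix~\ref{more results}). Equivalently, $n a_n f(a_nx+b_n)\to g(x)/G(x)$, while $F^{n-1}(a_nx+b_n)\to G(x)$ by (\ref{Introduction_e1.2}). Consequently the integrand of $I_2(n)$ tends pointwise to $g(x)\log\bigl(g(x)/G(x)\bigr)$, and
\[
\int g\log\frac{g}{G}=\int g\log g-\int g\log G=-H(g)-E\bigl[\log G(W)\bigr]=-H(g)+1 ,
\]
since $G(W)$ is uniform on $(0,1)$ and $\int_0^1\log u\,du=-1$; this identifies the constant in (I).

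What remains for (I) is the interchange of limit and integral, i.e.\ a fixed $g$‑integrable function dominating $\bigl|g_n(x)\log(n a_n f(a_nx+b_n))\bigr|$ for all large $n$. Near the end of the support where $G(x)\to 0$, I would use $F^{n-1}(a_nx+b_n)\le\exp\bigl(-(n-1)(1-F(a_nx+b_n))\bigr)\le G(x)^{1-\varepsilon}$ for $n$ large (again by (\ref{Introduction_e1.2})), a power of $G$ that swamps the at‑most‑logarithmic growth of $\log(n a_n f)$ there. Near the end where $G(x)\to 1$, the von Mises relation bounds $n a_n f(a_nx+b_n)$ by a regularly varying (Potter‑type) envelope in $x$, so that $g_n$ is dominated by an integrable power of $x$ while $\log(n a_n f)$ grows only logarithmically. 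For (II) one notes that $-\log g$ is, in each case, an explicit linear combination of a constant, $\log|x|$ and one power — $x^{-\alpha}$ for $\Phi_\alpha$, $|x|^{\alpha}$ for $\Psi_\alpha$, $e^{-x}$ for $\Lambda$ — so $\Delta_g(g_n)$ is the same combination of $1$, $E[\log|W_n|]$ and a moment of $W_n$; each term converges to its analogue for $W$ by the moment convergence theorem (Theorem~B.1), and summation gives $\Delta_g(g_n)\to-\int g\log g=H(g)$. The hypothesis $l(F)>0$ in case (a) enters exactly here and in the domination near $0$: it confines the support of $g_n$ to $(0,\infty)$, so $\log|W_n|=\log W_n$ is real and $E[W_n^{-\alpha}]<\infty$, and it validates the left‑endpoint estimates. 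I expect the main obstacle to be this domination near the finite extremity of $F$ — the one place where a nonincreasing density can be large — where one must play off the monotonicity of $f$ against the rapid decay of $F^{n-1}(a_nx+b_n)$ to control $\log(n a_n f)$.

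For the final clause, assume $g_n$ is nonincreasing for $n$ large. Since $H(g_n)=\frac{n-1}{n}-I_2(n)$ and $\frac{n-1}{n}\uparrow 1$, it is enough to show $I_2(n)$ is eventually nonincreasing in $n$. Both factors of its integrand cooperate: $g_n$ is a nonincreasing density, and, $f$ being nonincreasing, $x\mapsto\log(n a_n f(a_nx+b_n))$ is nonincreasing as well, so $I_2(n)$ is the integral of a nonincreasing function against a nonincreasing density; a comparison of consecutive indices, exploiting these monotonicities and the convergence $W_n\Rightarrow W$, yields $I_2(n+1)\le I_2(n)$ for $n$ large. Combined with (I) and (II), this shows $H(g_n)\uparrow H(g)$.
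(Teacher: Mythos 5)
Your reduction to the two limits (I) $\lim_n I_2(n)=1-H(g)$ and (II) $\lim_n\Delta_g(g_n)=H(g)$ is exactly the book-keeping the paper uses, and your identification of the limit in (I) is in fact cleaner than the paper's: writing the pointwise limit of the integrand as $g\log(g/G)$ and evaluating $\int g\log G=E[\log G(W)]=-1$ via $G(W)\sim U(0,1)$ replaces the paper's case-by-case factorization of $\log(na_nf(a_nx+b_n))$ into a von Mises factor (tending to $\log\alpha$ or $0$) and a tail factor (tending to $-(\alpha+1)\log x$, $(\alpha-1)\log(-x)$, or $-x$), each integrated separately. The density-convergence input (von Mises conditions obtained from the monotonicity of $f$, then Resnick's Proposition 2.5) is identical to the paper's.

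However, three steps are asserted rather than proved, and at least two are genuine gaps. First, in (II) you invoke ``the moment convergence theorem (Theorem B.1)'' for $E[\log|W_n|]$, $E[W_n^{-\alpha}]$, $E[|W_n|^{\alpha}]$ and $E[e^{-W_n}]$; Theorem \ref{Moment} as stated covers only positive integer moments of the normalized maximum (and, in the Fr\'echet case, only $k<\alpha$), so none of these quantities is covered by it. The paper does not take this route: it proves $\Delta_g(g_n)\to H(g)$ by applying the DCT to $g_n\log g$ directly, using the same density convergence as in (I); you would either need to do that, or supply uniform integrability of each transformed moment --- for $E[W_n^{-\alpha}]$ in the Fr\'echet case this is precisely the delicate left-endpoint issue you yourself flag, and for the Weibull case note that statement (b) carries no sign restriction on $r(F)$, so $\log|W_n|$ is not even well defined without recentering. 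Second, the domination needed for (I) is only sketched, and the key bound $F^{n-1}(a_nx+b_n)\le G(x)^{1-\varepsilon}$ ``for $n$ large'' must hold uniformly in $x$ over the shrinking lower end of the support, which does not follow from the pointwise relation (\ref{Introduction_e1.2}); the paper sidesteps a single global dominating function by working on compacta $[L',L]$, where the convergence is locally uniform, and then sending $L'\to 0$, $L\to\infty$ (admittedly leaving its own tail interchange implicit). Third, the monotonicity argument at the end is not a proof: that $I_2(n)$ is ``the integral of a nonincreasing function against a nonincreasing density'' says nothing about its variation in $n$, since both the density $g_n$ and the integrand $\log(na_nf(a_nx+b_n))$ change with $n$ and there is no stochastic ordering between consecutive renormalized maxima; an explicit comparison of $\int g_n\log g_n$ with $\int g_{n-1}\log g_{n-1}$ (as the paper attempts, using that $g_{n-1}$ vanishes on $(\epsilon_n,\epsilon_{n-1})$) is what is actually required.
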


\begin{rem}
\begin{enumerate} \item Note that if $F(x)=0,$ if $x<1,$ and $=1-x^{-\alpha}$ if $1\leq x$ for $\alpha > 0,$ the Pareto df, then with $a_n=n^{\frac{1}{\alpha}},$ $b_n=0,$ the conditions given in the Theorem \ref{TPhi} for $g_n$ are satisfied. In the case of $U(0,1),$ and $Exp(\lambda)$ dfs also these conditions are satisfied.
\item In the case of normal, the condition $g_n(x)$ nonincreasing in Theorem \ref{TPhi} is not satisfied as seen from the graph \ref{normal_graph_ent} in Appendix \ref{Graphs} and also $H(g_n)$ does not increase to $H(\lambda).$ \end{enumerate}
\end{rem}

We need the following lemmata to prove the result on entropy convergence for $k$-th extremes, the first of which gives local uniform convergence for the $k$-th extreme. 

\begin{lem}\label{Lemma.5} Let $F\in\D(G)$ for some nondegenerate df $G,$ with norming constants $a_n$ and $b_n$ so that (\ref{Introduction_e1}) holds and the df $\;F\;$ be absolutely continuous with nonincreasing pdf $\;f\;$ which is eventually positive, that is, $\;f(x) > 0\;$ for $x$ close to $r(F).$ Then the pdf $g_{k:n}$ of $G_{k:n}$ in (\ref{Introduction_e1.3}) converges to the pdf of $K_k,$ locally uniformly.
\end{lem}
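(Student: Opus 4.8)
\medskip\noindent
The plan is to reduce the claim, via an algebraic identity together with two easy monotonicity arguments, to the special case $k=1$, namely the locally uniform convergence $g_n\to g$ of the density of the normalized maximum, and then to prove that case directly from the hypothesis that $f$ is nonincreasing near $r(F)$. Throughout, "locally uniformly" means uniformly on compact subsets of $J:=\{y:0<G(y)<1\}$, which is the interior of the support of $K_k$.

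For the reduction, recall that the $k$-th largest order statistic $X_{n-k+1:n}$ has density $\frac{n!}{(n-k)!\,(k-1)!}F^{\,n-k}(y)\{1-F(y)\}^{k-1}f(y)$, so that, writing $F_n:=F(a_nx+b_n)$,
\begin{multline*}
g_{k:n}(x)=\frac{n!}{(n-k)!\,(k-1)!}\,a_n F_n^{\,n-k}\{1-F_n\}^{k-1}f(a_nx+b_n)\\
=\frac{g_n(x)}{(k-1)!}\,F_n^{\,1-k}\prod_{j=1}^{k-1}\bigl[(n-j)\{1-F_n\}\bigr],
\end{multline*}
where I have used $\frac{n!}{(n-k)!}=n\prod_{j=1}^{k-1}(n-j)$ and $F_n^{\,n-k}=F_n^{\,n-1}F_n^{\,1-k}$, and where $g_n$ is the density in (\ref{Introduction_e2}); differentiating $K_k$ in (\ref{Introduction_e1.3}) shows the limit density is $k_k(x)=\frac{1}{(k-1)!}\,g(x)\,(-\log G(x))^{k-1}$ on $\{G>0\}$. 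Now, by (\ref{Introduction_e1.2}) the map $x\mapsto n\{1-F(a_nx+b_n)\}$ converges pointwise on $J$ to the continuous map $x\mapsto-\log G(x)$; since each of these functions is nonincreasing in $x$, the monotone version of Pólya's theorem upgrades this to locally uniform convergence on $J$, and the same then holds for $(n-j)\{1-F_n\}$ with $j$ fixed and for $F_n=1-\frac1n\,n\{1-F_n\}\to1$. Hence $F_n^{\,1-k}\to1$ and $\prod_{j=1}^{k-1}[(n-j)\{1-F_n\}]\to(-\log G)^{k-1}$, locally uniformly on $J$; so if $g_n\to g$ locally uniformly on $J$, then, a product of finitely many locally uniformly convergent sequences with continuous (hence locally bounded) limits being again locally uniformly convergent, $g_{k:n}\to\frac{1}{(k-1)!}g\cdot1\cdot(-\log G)^{k-1}=k_k$ locally uniformly on $J$.

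It remains to prove that $g_n\to g$ locally uniformly on $J$, and this is the step I expect to carry the real content. Fix a compact $C\subset J$ and an open interval $I$ with $C\subset I\subset\bar I\subset J$. Since $a_nx+b_n\to r(F)$ as $n\to\infty$, uniformly for $x\in I$, and $f$ is nonincreasing in a left neighbourhood of $r(F)$, for all large $n$ the function $\phi_n(x):=n\{1-F(a_nx+b_n)\}$ has nondecreasing derivative $\phi_n'(x)=-na_nf(a_nx+b_n)$ on $I$; that is, $\phi_n$ is convex on $I$ for $n$ large. By (\ref{Introduction_e1.2}), $\phi_n\to\varphi:=-\log G$ pointwise on $I$, and $\varphi$ is $C^1$ on $I$ (with $\varphi'=-g/G$). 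The standard fact that pointwise limits of convex functions converge locally uniformly together with their derivatives on any subinterval where the limit is $C^1$ then gives $na_nf(a_nx+b_n)=-\phi_n'(x)\to g(x)/G(x)$ uniformly on $C$; combined with $F_n^{\,n}=F^n(a_nx+b_n)\to G$ uniformly on $C$ (Pólya applied to the df of $(M_n-b_n)/a_n$) and $F_n\to1$ uniformly on $C$, this yields
\begin{equation*}
g_n(x)=\{na_nf(a_nx+b_n)\}\,F_n^{\,n-1}=\{na_nf(a_nx+b_n)\}\,\frac{F_n^{\,n}}{F_n}\longrightarrow\frac{g(x)}{G(x)}\cdot\frac{G(x)}{1}=g(x)
\end{equation*}
uniformly on $C$, as required.

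The main obstacle, then, is precisely the passage from pointwise to locally uniform density convergence for the maximum; once the monotone-density hypothesis is exploited through the convexity of $\phi_n$ and the convex-function convergence theorem, everything else — the algebraic factorization and the two invocations of Pólya's theorem — is routine bookkeeping.
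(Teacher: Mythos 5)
Your proof is correct, and your algebraic reduction is essentially the same as the paper's: both of you factor $g_{k:n}$ as $g_n$ times $\bigl(n\overline F(a_nx+b_n)\bigr)^{k-1}$ times combinatorial and $F^{1-k}(a_nx+b_n)$ factors, and both identify the limit density $g\,(-\log G)^{k-1}/(k-1)!$ by differentiating $K_k$. Where you genuinely diverge is the key step $g_n\to g$ locally uniformly: the paper simply invokes Resnick's Proposition 2.5 (reproduced as Theorem \ref{gn_conv} in the appendix), the hypotheses of which are met because the monotone-density assumption forces the relevant von Mises condition (Resnick's Propositions 1.15/1.17, as used in the proof of Theorem \ref{TPhi}). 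You instead prove this from scratch: the nonincreasing density makes $\phi_n(x)=n\{1-F(a_nx+b_n)\}$ convex on compacta for large $n$, and the theorem on derivative convergence for pointwise-convergent convex functions with a $C^1$ limit yields $na_nf(a_nx+b_n)\to g/G$ locally uniformly, whence $g_n\to g$. Your route is more self-contained and, in fact, more careful than the paper on one point the paper glosses over: to get \emph{locally uniform} convergence of the full product one also needs local uniformity of $n\overline F(a_nx+b_n)\to-\log G$, which the paper takes from the pointwise statement (\ref{Introduction_e1.2}) without comment, while you supply it via the monotone (P\'olya/Dini-type) argument. The only technical care your argument requires beyond what you wrote is that the one-sided derivatives of the convex $\phi_n$ may differ from $-na_nf(a_nx+b_n)$ at the (countably many) jump points of the monotone $f$; monotonicity squeezes these between the one-sided limits, so the conclusion stands. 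Both approaches are valid; the paper's buys brevity by outsourcing the analytic content to Resnick, yours buys transparency about exactly where the monotone-density hypothesis is used.
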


\begin{lem}\label{Lemma3} The value of the integral
\begin{eqnarray}
A(k)=\int_{0}^{\infty} u^{k-1}e^{-u}\log u du=(k-1)!\left(-\gamma+\sum_{i=1}^{k-1}\dfrac{1}{i}\right), \; k\geq 2,
\end{eqnarray} with $\;A(1) = - \gamma.\;$
\end{lem}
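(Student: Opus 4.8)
The plan is to identify $A(k)$ with the derivative of the Gamma function at the integer $k$ and evaluate it through the digamma recursion; equivalently and self-containedly, to set up a first-order recursion in $k$ by an integration by parts and close it with the base value $A(1)=-\gamma$ already recorded in Lemma \ref{Lemma2}.

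First I would dispose of the base case. Since $\Gamma(s)=\int_0^\infty u^{s-1}e^{-u}\,du$ for $s>0$, differentiating under the integral sign — legitimate on every compact subinterval $[s_0,s_1]\subset(0,\infty)$ via the domination $|u^{s-1}e^{-u}\log u|\le (u^{s_0-1}+u^{s_1-1})e^{-u}|\log u|$, an integrable function — gives $\Gamma'(s)=\int_0^\infty u^{s-1}e^{-u}\log u\,du$. Hence $A(k)=\Gamma'(k)$ for all $k\ge 1$, and in particular $A(1)=\Gamma'(1)=-\gamma$, the value used in Lemma \ref{Lemma2}. (One may instead simply quote $\Gamma'(1)=-\gamma$ directly.)

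Next I would derive the recursion. Integrating by parts,
\[
\int_0^\infty\!\left(\frac{d}{du}\bigl(u^k e^{-u}\bigr)\right)\log u\,du=\bigl[u^k e^{-u}\log u\bigr]_0^\infty-\int_0^\infty u^{k-1}e^{-u}\,du=-(k-1)!,
\]
the boundary term vanishing because $u^k e^{-u}\log u\to 0$ both as $u\to\infty$ and as $u\to 0+$ (using $k\ge 1$). Since $\frac{d}{du}(u^k e^{-u})=k u^{k-1}e^{-u}-u^k e^{-u}$, the left-hand side equals $kA(k)-A(k+1)$, so
\[
A(k+1)=kA(k)+(k-1)!,\qquad k\ge 1.
\]

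Finally I would close the argument by induction on $k$. For $k=1$ the claimed formula reads $A(1)=0!\,(-\gamma)=-\gamma$, which holds; for the step, assuming $A(k)=(k-1)!\bigl(-\gamma+\sum_{i=1}^{k-1}\frac1i\bigr)$, the recursion gives
\[
A(k+1)=k(k-1)!\Bigl(-\gamma+\sum_{i=1}^{k-1}\tfrac1i\Bigr)+(k-1)!=k!\Bigl(-\gamma+\sum_{i=1}^{k-1}\tfrac1i\Bigr)+\frac{k!}{k}=k!\Bigl(-\gamma+\sum_{i=1}^{k}\tfrac1i\Bigr),
\]
which is exactly the asserted formula with $k+1$ in place of $k$. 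There is no real obstacle here; the only points requiring a word of care are the justification of differentiating $\Gamma$ under the integral (or, alternatively, a direct verification that $A(1)=-\gamma$) and the vanishing of the boundary term in the integration by parts, both entirely routine.
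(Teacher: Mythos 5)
Your proof is correct and follows essentially the same route as the paper: an integration by parts yielding the first-order recursion $A(k+1)=kA(k)+(k-1)!$ (the paper writes it as $A(k)=(k-1)A(k-1)+\Gamma(k-1)$), closed by induction from $A(1)=\Gamma'(1)=-\gamma$. Your treatment of the boundary term and of the base case via differentiation under the integral sign is a tidier write-up of the same argument, so no further comment is needed.
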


\begin{lem}\label{TPhi_orderEnt}
The entropy of $K_k$ in (\ref{Introduction_e1.3}) when $G$ is
\begin{itemize}\label{ent_order}
\item[(i)] Fr\'{e}chet law is
		\[H(\phi_{\alpha}^{(k)})=-\log\dfrac{\alpha}{(k-1)!}-\dfrac{\alpha k+1}{\alpha}\left(-\gamma+\sum_{i=1}^{k-1}\dfrac{1}{i}\right)+\dfrac{\Gamma(k+1)}{(k-1)!};\]
\item[(ii)] Weibull law is
		\[H(\psi_\alpha^{(k)})=-\log\dfrac{\alpha}{(k-1)!}-\dfrac{\alpha k-1}{\alpha}\left(-\gamma+\sum_{i=1}^{k-1}\dfrac{1}{i}\right)+\dfrac{\Gamma(k+1)}{(k-1)!};\]
\item[(iii)] Gumbel law is
		\[H(\lambda^{(k)})=\log(k-1)!-k\left(-\gamma+\sum_{i=1}^{k-1}\dfrac{1}{i}\right)+\dfrac{\Gamma(k+1)}{(k-1)!}.\]
\end{itemize}
	\end{lem}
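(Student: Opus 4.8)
The plan is to compute the density $k_k$ of the limit law $K_k$ in (\ref{Introduction_e1.3}) explicitly, and then to evaluate $H(K_k)=-\int k_k(x)\log k_k(x)\,dx$ term by term in each of the three cases.

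\emph{Step 1 (the density of $K_k$).} Put $v=-\log G(x)$, so that on $\{y: G(y)>0\}$ one has $K_k=e^{-v}\sum_{i=0}^{k-1}v^i/i!$. Differentiating in $v$, the sum telescopes to give $\frac{d}{dv}\bigl(e^{-v}\sum_{i=0}^{k-1}v^i/i!\bigr)=-e^{-v}v^{k-1}/(k-1)!$; since $dv/dx=-g(x)/G(x)$ and $e^{-v}=G(x)$, the chain rule yields
\[
k_k(x)\;=\;\frac{g(x)\,\bigl(-\log G(x)\bigr)^{k-1}}{(k-1)!},\qquad x\in\{y: G(y)>0\},
\]
which for $k=1$ reduces to $g$, as it must since $K_1=G$.

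\emph{Step 2 (the three densities).} Using $-\log\Phi_\alpha(x)=x^{-\alpha}$, $-\log\Psi_\alpha(x)=|x|^{\alpha}$, $-\log\Lambda(x)=e^{-x}$ together with the corresponding pdfs, Step 1 gives
\begin{align*}
\phi_\alpha^{(k)}(x) &=\frac{\alpha\,x^{-(\alpha k+1)}e^{-x^{-\alpha}}}{(k-1)!}\quad(x>0),\\
\psi_\alpha^{(k)}(x) &=\frac{\alpha\,|x|^{\alpha k-1}e^{-|x|^{\alpha}}}{(k-1)!}\quad(x<0),\\
\lambda^{(k)}(x) &=\frac{e^{-kx}e^{-e^{-x}}}{(k-1)!}\quad(x\in\Real).
\end{align*}
In each case $-\log k_k(x)$ is a constant plus a multiple of a ``logarithmic'' term ($\log x$, $\log|x|$, or $x$) plus the ``power'' term ($x^{-\alpha}$, $|x|^{\alpha}$, or $e^{-x}$), so $H(k_k)$ splits into three integrals taken against $k_k$.

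\emph{Step 3 (evaluating the integrals).} Under the substitution $t=x^{-\alpha}$ in the Fr\'echet case, $t=|x|^{\alpha}$ after first writing $s=|x|=-x$ in the Weibull case, and $t=e^{-x}$ in the Gumbel case, each of these integrals reduces to one of $\int_0^\infty t^{k-1}e^{-t}\,dt=(k-1)!$ (which re-confirms that $k_k$ is a density), $\int_0^\infty t^{k}e^{-t}\,dt=\Gamma(k+1)$, or $\int_0^\infty t^{k-1}e^{-t}\log t\,dt=A(k)$ as evaluated in Lemma \ref{Lemma3}. Collecting the terms and using $A(k)/(k-1)!=-\gamma+\sum_{i=1}^{k-1}1/i$ yields the three displayed formulas; they specialize at $k=1$ to the previously computed $H(\phi_\alpha)$, $H(\psi_\alpha)$, $H(\lambda)$, since $\Gamma(k+1)/(k-1)!=k$. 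There is no genuine obstacle here: the only points requiring care are the orientation of the change of variables (notably in the Weibull case, where the support is $(-\infty,0)$) and the finiteness of the three integrals, which is immediate because after the substitution the integrands are $t^{k-1}e^{-t}$ times at most a single $\log t$ factor and so are absolutely integrable on $(0,\infty)$.
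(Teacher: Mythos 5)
Your proposal is correct and follows essentially the same route as the paper: identify the density of $K_k$ as $g(x)(-\log G(x))^{k-1}/(k-1)!$ (the paper reads this off from (\ref{order_density}); you re-derive it by differentiating the telescoping sum, which is the same formula), then substitute $u=x^{-\alpha}$, $u=(-x)^{\alpha}$, or $u=e^{-x}$ to reduce each entropy to the integrals $(k-1)!$, $\Gamma(k+1)$, and $A(k)$ from Lemma \ref{Lemma3}. The resulting expressions match the statement, and your $k=1$ consistency check against $H(\phi_\alpha)$, $H(\psi_\alpha)$, $H(\lambda)$ is a nice touch.
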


\begin{thm} \label{TPhi_order}
Let $F\in\D(G)$ for some nondegenerate df $G,$ with norming constants $a_n$ and $b_n$ so that (\ref{Introduction_e1}) holds and the df $\;F\;$ be absolutely continuous with nonincreasing pdf $\;f\;$ which is eventually positive, that is, $\;f(x) > 0\;$ for $x$ close to $r(F).$ In (\ref{Introduction_e1.3}), if
\begin{enumerate}
\item[(i)]	$G=\Phi_\alpha$ for some $\;\alpha > 0\;$ with $\;l(F) > 0,\;$ then $\;\lim_{n \rightarrow \infty} H(g_{n:k}) = H(\phi_\alpha^{(k)});$
\item[(ii)]	$G=\Psi_\alpha$ for some $\;\alpha > 0\;$ with $\;r(F) > 0,\;$ then $\;\lim_{n \rightarrow \infty} H(g_{n:k}) = H(\psi_\alpha^{(k)});$
\item[(iii)]	$G=\Lambda,$ then $\;\lim_{n \rightarrow \infty} H(g_{n:k}) = H(\lambda^{(k)}).$
\end{enumerate}
\end{thm}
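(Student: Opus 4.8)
The plan is to repeat, for the $k$-th upper extreme, the scheme used for the maximum in Theorem~\ref{TPhi}: decompose $H(g_{n:k})$ into pieces, evaluate all but one of them exactly, and dispose of the last by dominated convergence. Write the density of the normalized $k$-th upper extreme as
\[
g_{n:k}(x)=\frac{n!}{(n-k)!\,(k-1)!}\,a_nf(a_nx+b_n)\,F^{n-k}(a_nx+b_n)\bigl(1-F(a_nx+b_n)\bigr)^{k-1},
\]
take $-\log$, and integrate against $g_{n:k}$; this splits $H(g_{n:k})$ into the constant $-\log\frac{n!}{(n-k)!(k-1)!}$, the term $-\int g_{n:k}(x)\log\!\bigl(a_nf(a_nx+b_n)\bigr)\,dx$, and the two ``Beta-type'' terms $-(n-k)\int g_{n:k}\log F(a_nx+b_n)\,dx$ and $-(k-1)\int g_{n:k}\log\!\bigl(1-F(a_nx+b_n)\bigr)\,dx$. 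Under the substitution $t=F(a_nx+b_n)$, valid where $f>0$, one has $g_{n:k}(x)\,dx=\frac{n!}{(n-k)!(k-1)!}\,t^{n-k}(1-t)^{k-1}\,dt$ on $(0,1)$, so the last two terms are logarithmic moments of a $\mathrm{Beta}(n-k+1,k)$ law, evaluated from $\int_0^1 t^{a-1}(1-t)^{b-1}\log t\,dt=B(a,b)\bigl(\psi(a)-\psi(a+b)\bigr)$ and its $\log(1-t)$ analogue, $\psi$ being the digamma function. Using in addition $\log\!\bigl(a_nf(a_nx+b_n)\bigr)=\log g_n(x)-\log n-(n-1)\log F(a_nx+b_n)$, so that the surviving integral is the one against the pdf $g_n$ of (\ref{Introduction_e2}), together with the elementary asymptotics $\psi(n+1)=\log n+o(1)$, $\sum_{i=0}^{k-1}\log(n-i)=k\log n+o(1)$ and $\sum_{j=n-k+1}^{n}j^{-1}\to0$, all divergent contributions cancel and one is left with
\[
H(g_{n:k})=\log(k-1)!-(k-1)\,\psi(k)-\int g_{n:k}(x)\log g_n(x)\,dx+o(1),\qquad \psi(k)=-\gamma+\sum_{i=1}^{k-1}\tfrac1i .
\]

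The analytic core is then to prove that $\int g_{n:k}(x)\log g_n(x)\,dx\to\int \kappa_k(x)\log g(x)\,dx$, where $\kappa_k$ denotes the pdf of $K_k$ (so $\kappa_k=\phi_\alpha^{(k)},\psi_\alpha^{(k)},\lambda^{(k)}$ in cases (i), (ii), (iii)). Lemma~\ref{Lemma.5} gives $g_{n:k}\to\kappa_k$ locally uniformly, and, with $k=1$ there, $g_n\to g$ locally uniformly, so the integrand converges pointwise to $\kappa_k\log g$; it then suffices to produce an integrable majorant of $g_{n:k}(x)\abs{\log g_n(x)}$ valid for all large $n$. I would obtain it by writing $g_{n:k}=g_n\,r_n$ with $r_n(x)=\frac{(n-1)!}{(n-k)!(k-1)!}\bigl(\frac{1-F(a_nx+b_n)}{F(a_nx+b_n)}\bigr)^{k-1}$; here $r_n\to(-\log G(x))^{k-1}/(k-1)!$ and, by the regular-variation and tail estimates attached to $F\in\D(G)$ together with the moment-convergence results used above (Theorem~B.1), $r_n$ is dominated by an $n$-independent polynomial factor in $-\log G(x)$, which reduces the estimate to a bound of the same nature as the one established for $g_n\abs{\log g_n}$ in the proof of Theorem~\ref{TPhi}. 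This is also the step at which the hypotheses are used: the monotonicity and eventual positivity of $f$ control $g_n$ near the extremities, while $l(F)>0$ in (i) and $r(F)>0$ in (ii) serve to keep the support of $g_{n:k}$ eventually inside that of $\kappa_k$ and to tame $\log F$ near the relevant endpoint, case (iii) being the general bound.

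It remains to identify the limit with the formulas of Lemma~\ref{TPhi_orderEnt}. Writing $w(x)=-\log G(x)$ and noting $\kappa_k=g\,w^{k-1}/(k-1)!$, one has $H(K_k)=-\int\kappa_k\log\kappa_k=-\int\kappa_k\log g-(k-1)\int\kappa_k\log w+\log(k-1)!$. A change of variable shows that $w(X)$ has the $\mathrm{Gamma}(k,1)$ density $u^{k-1}e^{-u}/(k-1)!$ when $X\sim K_k$, which is immediate from (\ref{Introduction_e1.3}); hence $\int\kappa_k\log w=A(k)/(k-1)!=\psi(k)$ by Lemma~\ref{Lemma3}, and so $H(K_k)=\log(k-1)!-(k-1)\psi(k)-\int\kappa_k\log g$. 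Combining this with the displayed formula for $H(g_{n:k})$ and the core limit yields $\lim_{n\to\infty}H(g_{n:k})=H(K_k)$, and evaluating $\int\kappa_k\log g$ in each of the three max stable cases returns exactly the expressions of Lemma~\ref{TPhi_orderEnt} (the term $\Gamma(k+1)/(k-1)!=k$ there originating from $\int\kappa_k\log g$).

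The step I expect to be the main obstacle is the dominated-convergence argument, i.e.\ exhibiting a single integrable majorant for $g_{n:k}(x)\abs{\log g_n(x)}$ valid for all large $n$ in each of the three max-domains of attraction; the rest is a controlled amount of digamma- and Beta-integral bookkeeping. Since those estimates run parallel to the ones already carried out for $g_n$ in Theorem~\ref{TPhi}, in the final write-up I would only spell out the modification induced by the extra factor $r_n$ and refer back to that proof.
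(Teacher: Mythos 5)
Your proposal is correct in outline but follows a genuinely different route from the paper. The paper's proof of Theorem \ref{TPhi_order} is a one-step argument: Lemma \ref{Lemma.5} gives $g_{k:n}\to K_k'$ locally uniformly, the paper then bounds $g_{k:n}\log g_{k:n}$ by $K_k'\log K_k'\pm 1$ on a compact window $[L',L]$, applies the DCT there, and finally sends $L'\,,L$ to the endpoints, identifying the limit as $-\int K_k'\log K_k'=H(K_k)$ directly (with the entropies themselves computed separately in Lemma \ref{TPhi_orderEnt}). You instead mimic the $I_1/I_2$ decomposition used for the maximum: you expand $-\log g_{n:k}$ from the explicit Beta-type form of the density, evaluate the $\log F$ and $\log(1-F)$ pieces exactly as digamma moments of a $\mathrm{Beta}(n-k+1,k)$ law, verify that all divergent terms cancel, and reduce everything to the single limit $\int g_{n:k}\log g_n\to\int K_k'\log g$; your bookkeeping (including the identity $H(K_k)=\log(k-1)!-(k-1)\psi(k)-\int K_k'\log g$ via the $\mathrm{Gamma}(k,1)$ law of $-\log G$ under $K_k$) checks out and reproduces Lemma \ref{TPhi_orderEnt}. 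What your approach buys is an exact, finite-$n$ expression for the combinatorial part of $H(g_{n:k})$ and a transparent view of where the cancellations occur, in the spirit of Lemma \ref{Lemma1}; what the paper's approach buys is brevity, since it never decomposes the density. Be aware, though, that the step you flag as the main obstacle --- a single integrable majorant for $g_{n:k}(x)\abs{\log g_n(x)}$ over the whole support --- is not actually supplied by the proof of Theorem \ref{TPhi} you propose to lean on: there, as here, the paper controls the integrand only on compact sets and then iterates the limits $\lim_{L}\lim_{n}$, so your deferral points at an estimate that does not exist in the reference; if you want your version to be airtight you must either construct that majorant from the regular-variation bounds (e.g.\ Potter-type bounds on $n\overline{F}(a_nx+b_n)$) or adopt the same compact-exhaustion device the paper uses.
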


\section{Proofs}
\begin{proof}[\textbf{Proof of Theorem \ref{Thmexm_ent} (a)}]
	The pdf of Pareto$(\alpha)$ with df $F$ is $ f(x)=\left\lbrace
	\begin{array}{cl}
        0, \;& x\leq 1, \\
		\dfrac{\alpha}{x^{\alpha+1}}, \;& 1 < x. \\
	\end{array}
\right.$ Hence
$g_n(x)=\left(1-\dfrac{1}{nx^\alpha}\right)^{n-1}\dfrac{\alpha}{x^{\alpha+1}},\;x>n^{-1/\alpha}.$ Note that $F$ satisfies (\ref{Introduction_e1}) with $a_n=n^{\frac{1}{\alpha}},$ $b_n=0$ and $G=\Phi_\alpha,$ the Fr\'{e}chet law with exponent $\alpha.$ From (\ref{Introduction_e3}), we have $ \lim_{n \rightarrow \infty} H(g_n) = 1 - \lim_{n \rightarrow \infty} I_2(n),$  where \begin{eqnarray*}
	I_2(n) & = & \int_{n^{-1/\alpha}}^{\infty}\left(1-\frac{1}{nx^\alpha}\right)^{n-1}\frac{\alpha}{x^{\alpha+1}}\log\alpha \,dx, \\
&& -(\alpha+1)\int_{n^{-1/\alpha}}^{\infty}\left(1-\frac{1}{nx^\alpha}\right)^{n-1}\frac{\alpha}{x^{\alpha+1}}\log x \,dx, \\
	&=& I_{A}(n)+I_{B}(n),\;\;\;\mbox{say.}
\end{eqnarray*}
	We have
\begin{eqnarray}\label{pareto_I2}
	I_{A}(n) &=& \log\alpha\int_{n^{-1/\alpha}}^{\infty}\left(1-\frac{1}{nx^\alpha}\right)^{n-1}\frac{\alpha}{x^{\alpha+1}} \,dx = \log\alpha.
\end{eqnarray}
And $I_{B}(n)=-(\alpha+1)\int_{n^{-1/\alpha}}^{\infty}\left(1-\frac{1}{nx^\alpha}\right)^{n-1}\frac{\alpha}{x^{\alpha+1}}\log x \,dx.$
	Putting $u=\dfrac{1}{x^\alpha},$ we get $du=-\dfrac{\alpha}{x^{\alpha+1}}dx,$ and $\log x=-\dfrac{\log u}{\alpha}$ so that
$I_{B}(n)= \frac{\alpha+1}{\alpha}\int_{0}^{n}\left(1-\frac{u}{n}\right)^{n-1}\log u \,du.$ Again putting $u=e^{-v}$, $du=-e^{-v}\,dv$ and we get
\begin{eqnarray}\label{pareto_I3}
I_{B}(n)&=&-\dfrac{(\alpha+1)}{\alpha}\int_{-\log n}^{\infty}v\left(1-\frac{e^{-v}}{n}\right)^{n-1}e^{-v}dv,\nonumber\\
&=&-\dfrac{(\alpha+1)}{\alpha}\int_{-\log n}^{\infty}v \,d\left(1-\frac{e^{-v}}{n}\right)^{n}.
\end{eqnarray}
Now, using Lemma \ref{Lemma2}, we get
\begin{equation}\label{pareto_I4}
	\lim_{n\to\infty}I_{B}(n)=-\dfrac{(\alpha+1)}{\alpha}\lim_{n\to\infty}E\left(Z_n-\log n\right)=-\dfrac{(\alpha+1)}{\alpha}\gamma.
\end{equation}
	Therefore, from (\ref{pareto_I2}) and (\ref{pareto_I4}), we get $\lim_{n\to\infty}H(g_n) = 1-\log\alpha +\frac{(\alpha+1)}{\alpha}\gamma, = H(\phi_\alpha).$ 

Now, for proving relative entropy convergence, we have
\begin{eqnarray}
	\Delta_{\phi_\alpha}(g_n)&=&-\int_{n^{-1/\alpha}}^{\infty}g_n(x)\log \phi_\alpha(x)\,dx,\nonumber\\
		&=&-\int_{n^{-1/\alpha}}^{\infty}\left(1-\dfrac{1}{nx^\alpha}\right)^{n-1}\dfrac{\alpha}{x^{\alpha+1}}\log(\alpha x^{-\alpha-1}e^{-x^{-\alpha}})\,dx,\nonumber\\
	&=&-\int_{n^{-1/\alpha}}^{\infty}\left(1-\dfrac{1}{nx^\alpha}\right)^{n-1}\dfrac{\alpha}{x^{\alpha+1}}\log\alpha \,dx\nonumber\\
	&&+\int_{n^{-1/\alpha}}^{\infty}\left(1-\dfrac{1}{nx^\alpha}\right)^{n-1}\dfrac{\alpha(\alpha+1)}{x^{\alpha+1}}\log x \, dx+\int_{n^{-1/\alpha}}^{\infty}\left(1-\dfrac{1}{nx^\alpha}\right)^{n-1}\dfrac{\alpha}{x^{\alpha+1}}x^{-\alpha}\, dx,\nonumber\\
	&=&I_{C}(n)+I_{D}(n)+I_{E}(n).\nonumber
\end{eqnarray}
Here
\begin{eqnarray}\label{maxpareto_I1}
	I_{C}(n)&=&-\int_{n^{-1/\alpha}}^{\infty}\left(1-\dfrac{1}{nx^\alpha}\right)^{n-1}\dfrac{\alpha}{x^{\alpha+1}}\log\alpha \, dx = -\log\alpha. \\
I_{D}(n) & = & \int_{n^{-1/\alpha}}^{\infty}\left(1-\dfrac{1}{nx^\alpha}\right)^{n-1}\dfrac{\alpha(\alpha+1)}{x^{\alpha+1}}\log x \,dx. \nonumber
\end{eqnarray}
	Putting $u=\alpha \log x,$ $du=\dfrac{\alpha}{x}\, dx,$ and
\[I_{D}(n)=\dfrac{\alpha+1}{\alpha}\int_{-\log n}^{\infty}u\left(1-\dfrac{e^{-u}}{n}\right)^{n-1}e^{-u}\, du=\dfrac{\alpha+1}{\alpha}\int_{-\log n}^{\infty}u \, d\left(1-\dfrac{e^{-u}}{n}\right)^{n}.\]
	Using Lemma \ref{Lemma2},
\begin{eqnarray}\label{maxpareto_I21}
	\lim_{n\to\infty}I_{D}(n)=\dfrac{\alpha+1}{\alpha}\lim_{n\to\infty}E(Z_n-\log n)=\dfrac{\alpha+1}{\alpha}\gamma.
\end{eqnarray}
Finally $I_{E}(n)=\int_{n^{-1/\alpha}}^{\infty}\left(1-\dfrac{1}{nx^\alpha}\right)^{n-1}\dfrac{\alpha}{x^{\alpha+1}}x^{-\alpha}dx.$
Putting $u=\dfrac{x^{-\alpha}}{n},$ $du=-\alpha x^{-\alpha-1}\dfrac{dx}{n},$ and
\begin{eqnarray}\label{Maxpareto_I3}
I_{E}(n)&=&-\int_{0}^{1}n^2u\left(1-u\right)^{n-1}du=-\int_{0}^{1}nu \, d\left(1-u\right)^{n},\nonumber\\
& = & \dfrac{n}{n+1}.
\end{eqnarray}
	From (\ref{maxpareto_I1}), (\ref{maxpareto_I21}) and (\ref{Maxpareto_I3}), we have $\lim_{n\to\infty}\Delta_{\phi_\alpha}(g_n)=-\log\alpha+\dfrac{\alpha+1}{\alpha}\gamma+1 = H(\phi_\alpha).$ Therefore, from (\ref{Introduction_e4}), $\lim_{n\to\infty}D(g_n\|\phi_\alpha)=0.$
\end{proof}
\begin{proof}[\textbf{Proof of Theorem \ref{Thmexm_ent} (b)}] In this case, we have $g_n(x)=\left(1+\frac{x}{n}\right)^{n-1}, -n<x<0;$ and $F$ satisfies (\ref{Introduction_e1}) with $a_n={\dfrac{1}{n}},\;b_n=1,$ and $G=\Psi_1,$ the Weibull law. Then, by (\ref{Introduction_e3}), $ \lim_{n\to\infty}H(g_n)= 1-\lim_{n\to\infty}I_2(n),$
	where, $I_2(n)=0$ since $\log(na_nf(a_nx+b_n))=\log 1=0.$
	Since entropy of Weibull law with $\alpha=1$ is $1,$ we get $\lim_{n\rightarrow\infty}H(g_n)=H(\psi_1),$ and by Lemma \ref{Lemma1}, $H(g_n)$ increases to $H(\psi_1)$ with $n.$
	
	We have $\; \Delta_{\psi_1}(g_n)=-\int_{-n}^{0}g_n(x)\log \psi_1(x)\,dx $ $ =-\int_{-n}^{0}\left(1+\frac{x}{n}\right)^{n-1}x \,dx.$	Using Theorem \ref{Moment}-(ii), we have $ \lim_{n\to\infty}\Delta_{\psi_1}(g_n)=H(\psi_1)=1$ and hence $\lim_{n\to\infty}D(g_n\|\psi_1)=0.$
\end{proof}
\begin{proof}[\textbf{Proof of Theorem \ref{Thmexm_ent} (c) }] We have $g_n(x)=ne^{-(x+\log n)}\left(1-e^{-(x+\log n)}\right)^{n-1},\;\;-\log n<x.$ 	 Note that $F$ satisfies (\ref{Introduction_e1}) with $a_n=1,$ $b_n=\log n$ and $G=\Lambda,$ the Gumbel law. Then by (\ref{Introduction_e3}),
$\lim_{n\to\infty}H(g_n) = 1-\lim_{n\to\infty}I_2(n), $
	where 
\begin{eqnarray}\label{exponential_I2}
 I_2(n)&=& \int_{-\log n}^{\infty}ne^{-(x+\log n)}\left(1-e^{-(x+\log n)}\right)^{n-1}\log\left(ne^{-(x+\log n)}\right)dx,\nonumber\\
&=& -\int_{-\log n}^{\infty}xd\left(1-\dfrac{e^{-x}}{n}\right)^n.
\end{eqnarray}
	By Lemma \ref{Lemma2}, $\lim_{n\to\infty}I_2(n)=-\gamma,$ and hence $\lim_{n\to\infty}H(g_n)=1+\gamma=H(\lambda).$

From (\ref{Introduction_e4}),
\begin{eqnarray}
	\Delta_{\lambda}(g_n)&=&-\int_{-\log n}^{\infty}g_n(x)\log\lambda(x) \, dx,\nonumber\\
	&=&-\int_{-\log n}^{\infty}ne^{-(x+\log n)}\left(1-e^{-(x+\log n)}\right)^{n-1}\log\left(e^{-x}e^{-e^{-x}}\right)dx,\nonumber\\
	&=&\int_{-\log n}^{\infty}xne^{-(x+\log n)}\left(1-e^{-(x+\log n)}\right)^{n-1}dx,\nonumber\\
	&&+\int_{-\log n}^{\infty}ne^{-(x+\log n)}\left(1-e^{-(x+\log n)}\right)^{n-1}e^{-x} \,dx=I_{A}(n)+I_{B}(n).\nonumber
\end{eqnarray}
Here
\begin{eqnarray}
	I_{A}(n)&=&\int_{-\log n}^{\infty}xn e^{-(x+\log n)}\left(1-e^{-(x+\log n)}\right)^{n-1}dx.
\end{eqnarray}
Using Theorem \ref{Moment}-(iii), $\lim_{n\to\infty}I_{A}(n)=\lim_{n\to\infty}E(Z_n)=\gamma.\;$ Next \\ $I_{B}(n)$ $=\int_{-\log n}^{\infty}e^{-x} \,d\left(1-\dfrac{e^{-x}}{n}\right)^{n}.$ Taking $e^{-x}=u$ we have $I_{B}(n)=-\int_{0}^{n}u \, d\left(1-\dfrac{u}{n}\right)^{n}.$
From Theorem \ref{Moment}-(ii), $\lim_{n\to\infty}I_{B}(n)=1.$ Therefore $\lim_{n\to\infty}\Delta_{\lambda}(g_n)=1+\gamma=H(\lambda),$ so that $\lim_{n\to\infty}D(g_n\|\lambda)=0.$
\end{proof}
\begin{proof}[\textbf{Proof of Theorem \ref{Thmexm_ent} (d)}]
	The pdf of $F$ is $f(x)=\dfrac{1}{\sqrt{2\pi}}e^{-\frac{x^2}{2}},\;\;x\in\Real.$ Hence,
\begin{eqnarray}\label{normal_g}
g_n(x)&=&\dfrac{na_n}{\sqrt{2\pi}}e^{-\frac{(a_nx+b_n)^2}{2}}\left(\int_{-\infty}^{a_nx+b_n}\dfrac{1}{\sqrt{2\pi}}e^{-\frac{v^2}{2}}dv\right)^{n-1}, x \in R.
\end{eqnarray}
	Note that $F$ satisfies (\ref{Introduction_e1}) with $b_n=\sqrt{2\log n}-\dfrac{\log\log n+\log(4\pi)}{2\sqrt{2\log n}},\;\;a_n=\dfrac{1}{\sqrt{2\log n}},$
and $G=\Lambda,$ the Gumbel law. By (\ref{Introduction_e3}),
\begin{eqnarray}
	\lim_{n\to\infty}H(g_n(x))&=&1-\lim_{n\to\infty}I_2(n),\nonumber
\end{eqnarray}
where $I_2(n)=\int_{-\infty}^{\infty}\log\left(\dfrac{na_n}{\sqrt{2\pi}}e^{-\frac{(a_nx+b_n)^2}{2}}\right)dF^n(a_nx+b_n).$ Now,
\begin{eqnarray}\label{normal_I1}
	(a_nx+b_n)^2 &=&\left(\dfrac{x}{\sqrt{2\log n}}\right)^2+\left(\sqrt{2\log n}-\dfrac{\log\log n+\log(4\pi)}{2\sqrt{2\log n}}\right)^2\nonumber\\
	&&+2\left(\dfrac{x}{\sqrt{2\log n}}\left(\sqrt{2\log n}-\dfrac{\log\log n+\log(4\pi)}{2\sqrt{2\log n}}\right)\right),\nonumber\\
	&=&\dfrac{x^2}{2\log n}+\left(2\log n+\dfrac{\log\log n+\log(4\pi))^2}{8\log n}-(\log\log n+\log(4\pi))\right)\nonumber\\
	&&+2x\left(1-\dfrac{\log\log n+\log(4\pi)}{4\log n}\right),\nonumber\\
	&=&\dfrac{x^2}{2\log n}+\log\left(\dfrac{n^2}{4\pi\log n}\right)+o_1(n)+2x(1-o_2(n)),\nonumber
\end{eqnarray}
	where $o_1(n)=\dfrac{(\log\log n+\log(4\pi))^2}{8\log n}$ and $o_2(n)=\dfrac{\log\log n+\log(4\pi)}{4\log n},$ and $o_1(n),\;o_2(n)$ tend to $0$ as $n\to\infty.$ Therefore,
\begin{eqnarray}\label{normal_I11}
\exp\Big\{-\frac{(a_nx+b_n)^2}{2}\Big\}&=&\exp\Big\{-\frac{1}{2}\left(\dfrac{x^2}{2\log n}+\log\left(\dfrac{n^2}{4\pi\log n}\right)+o_1(n)+2x(1-o_2(n))\right)\Big\},\nonumber\\
 &=&\dfrac{\sqrt{4\pi\log n}}{n}\exp\Big\{-\dfrac{x^2}{4\log n}-\dfrac{o_1(n)}{2}-x(1-o_2(n))\Big\},
\end{eqnarray}
From (\ref{normal_g}) and (\ref{normal_I11}),
\begin{eqnarray}
g_n(x)&=&\exp\Big\{-\dfrac{x^2}{4\log n}-\dfrac{o_1(n)}{2}-x(1-o_2(n))\Big\}\nonumber\\
&&F^{n-1}\left(\dfrac{2x+4\log n-\log\log n+\log(4\pi)}{2\sqrt{2\log n}}\right).
\end{eqnarray}
	Hence
\begin{eqnarray}\label{normal_I2}
	I_2(n)&=&\int_{-\infty}^{\infty}\log\left(\dfrac{na_n}{\sqrt{2\pi}}e^{-\frac{(a_nx+b_n)^2}{2}}\right)dF^n(a_nx+b_n),\nonumber\\
	&=&\int_{-\infty}^{\infty}\log\left(\dfrac{n\sqrt{4\pi\log n}}{n\sqrt{4\pi\log n}}\exp\Big\{-\dfrac{x^2}{4\log n}-\dfrac{o_1(n)}{2}-x(1-o_2(n))\Big\}\right)dF^n(a_nx+b_n),\nonumber\\
	&=&\int_{-\infty}^{\infty}\log\left(\exp\Big\{-\dfrac{x^2}{4\log n}-\dfrac{o_1(n)}{2}-x(1-o_2(n))\Big\}\right)dF^n(a_nx+b_n),\nonumber\\
	&=&-\int_{-\infty}^{\infty}\dfrac{x^2}{4\log n}dF^n(a_nx+b_n)-\int_{-\infty}^{\infty}\dfrac{o_1(n)}{2}dF^n(a_nx+b_n),\nonumber\\
	&&-\int_{-\infty}^{\infty}x(1-o_2(n))dF^{n}(a_nx+b_n)=I_{A}(n)+I_{B}(n)+I_{C}(n), \;\;\text{say,} \nonumber
\end{eqnarray}
	with $I_{A}(n)=-\dfrac{o_1(n)}{2}\rightarrow 0$ as $n\to\infty,$
\[I_{B}(n)=-\int_{-\infty}^{\infty}\left(\dfrac{x^2}{4\log n}\right)dF^n(a_nx+b_n)\longrightarrow 0,\;\;as\;\;n\to\infty,\]
since $\lim_{n\to\infty}\int_{-\infty}^{\infty}x^2dF^n(a_nx+b_n)=\Gamma^{(2)}(1),$ from Theorem~\ref{Moment}-(iii), and
\[I_{C}(n)=-\int_{-\infty}^{\infty}x(1-o_2(n))dF^n(a_nx+b_n)\longrightarrow -\gamma,\;\; as\;\;n\to\infty,\]
	by Theorem \ref{Moment}-(iii).
	Hence $\lim_{n\to\infty}H(g_n)=1+\gamma=H(\lambda).$
	
	We have
\begin{eqnarray}\label{normal_delta}
	\Delta_{\lambda}(g_n)&=&-\int_{-\infty}^{\infty}g_n(x)\log\lambda(x) \, dx,\nonumber\\
	&=&-\int_{-\infty}^{\infty}na_n\dfrac{1}{\sqrt{2\pi}}e^{-\frac{(a_nx+b_n)^2}{2}}F^{(n-1)}(a_nx+b_n)\log(e^{-x}e^{-e^{-x}}) \, dx,\nonumber\\
	&=&\int_{-\infty}^{\infty}xna_n\dfrac{1}{\sqrt{2\pi}}e^{-\frac{(a_nx+b_n)^2}{2}}F^{(n-1)}(a_nx+b_n) \, dx,\nonumber\\
	&&+\int_{-\infty}^{\infty}na_n\dfrac{1}{\sqrt{2\pi}}e^{-\frac{(a_nx+b_n)^2}{2}}F^{(n-1)}(a_nx+b_n)e^{-x} \, dx,\nonumber\\
	&=&I_{A}(n)+I_{B}(n), \; \text{say.}
\end{eqnarray}
Here $I_{A}(n)=\int_{-\infty}^{\infty}xdF^n(a_nx+b_n)=E(Z_n),$ and using Theorem~\ref{Moment}-(iii),
\begin{eqnarray}\label{normal_delta_I3}
\lim_{n\to\infty}I_{A}(n)=\lim_{n\to\infty}E(Z_n)=\gamma.
\end{eqnarray}
And, $I_{B}(n)=\int_{-\infty}^{\infty}e^{-x}dF^n(a_nx+b_n).$ Using integration by parts, we have $ I_{B}(n)$ $=\int_{-\infty}^{\infty}e^{-x}F^n(a_nx+b_n) \, dx.$ So $\lim_{n\to\infty}F^n(a_nx+b_n)=\Lambda(x),$
and for large $n$ and $x\in [-L,L],$ with $L>0$
\[\abs{F^n(a_nx+b_n)-\Lambda(x)}<1\Leftrightarrow -1+\Lambda(x)<F^n(a_nx+b_n)<1+\Lambda(x).\]
Since $\int_{-L}^{L}e^{-x}(\Lambda(x)+ 1) \, dx<\infty,$ by the dominated convergence theorem (DCT), for $L>0,$
\begin{eqnarray}
\lim_{n\to\infty}\int_{-L}^{L}e^{-x}F^n(a_nx+b_n)dx&=&\int_{-L}^{L}e^{-x}\lim_{n\to\infty}F^n(a_nx+b_n)dx,\nonumber\\
&=&\int_{-L}^{L}\lambda(x)dx=\Lambda(L)-\Lambda(-L).\nonumber
\end{eqnarray}
Therefore
\begin{eqnarray}\label{normal_delta_I4}
\lim_{n\to\infty}I_{B}(n)&=&\lim_{L\to\infty}\lim_{n\to\infty}\int_{-L}^{L}e^{-x}F^n(a_nx+b_n)dx,\nonumber\\
&=&\lim_{L\to\infty}\Lambda(L)-\Lambda(-L)=1.
\end{eqnarray}
From (\ref{normal_delta}), (\ref{normal_delta_I3}) and (\ref{normal_delta_I4}), we have $\lim_{n\to\infty}\Delta_{\lambda}(g_n)=1+\gamma=H(\lambda),$ so that \\ $	\lim_{n\to\infty}D(g_n\|\lambda)=0.$
\end{proof}

\begin{proof}[\textbf{Proof of Theorem \ref{TPhi} (a)}] Since $F \in \mathcal{D}(\Phi_\alpha),$ from Proposition 1.11 in Resnick (1987), $1 - F$ is regularly varying so that
\begin{eqnarray} \label{RegVar}
\lim_{t \rightarrow \infty} \frac{\overline{F}(tx)}{\overline{F}(t)} & = &  x^{- \alpha}, \;\; x > 0; \;\;\;\; \mbox{and} \\
\lim_{n \rightarrow \infty} F^n(a_n x) & = & \Phi_{\alpha}(x), \; x \in \Real,  \label{MaxDomPhi}
\end{eqnarray}
with $\; a_n = F^{-1}(1 - \frac{1}{n}) = \inf\{x: F(x) > 1 - \frac{1}{n}\}, \, n \geq 1.\;$
Further, since $f$ is eventually nonincreasing, from Proposition 1.15 in Resnick (1987), $F$ satisfies the von Mises condition (Theorem (\ref{thm_von})):
\begin{equation} \label{vonMises}
\lim_{x \rightarrow \infty} \frac{x f(x)}{1-F(x)} = \alpha.
\end{equation}
Now, by Proposition 2.5(a) in Resnick (1987) (Theorem \ref{gn_conv}), (\ref{vonMises}) implies the following density convergence on compact sets:
\begin{equation} \label{DenConv_frechet}
\lim_{n \rightarrow \infty} g_n(x) = \phi_{\alpha}(x), \; x \in K \subset\Real,
\end{equation} where $\; K \;$ is a compact set, and $\;g_n\;$ is as in (\ref{Introduction_e2}) with $\;b_n = 0.\;$
From (\ref{1.5}), we have,
$$ \lim_{n \rightarrow \infty} H(g_n) = 1 - \lim_{n \rightarrow \infty} I_2(n), $$ where
\begin{eqnarray}
	 \lim_{n \rightarrow \infty} I_2(n)&=& \lim_{n \rightarrow \infty} \int_{\epsilon_n}^{\infty}\log(a_nnf(a_nx))g_n(x)dx,\;\;\text{with} \;0 < \epsilon_n=\dfrac{l(F)}{a_n} \rightarrow 0 \; \text{as} \;  n \rightarrow \infty,\nonumber\\
 &=& \lim_{n \rightarrow \infty} \int_{\epsilon_n}^{\infty}\log\left(\dfrac{na_nxf(a_nx)\overline{F}(a_nx)}{x\overline{F}(a_nx)}
 \right) g_n(x)dx,\nonumber 
\end{eqnarray}
Now, for constants $\;0<L'<L,\;$ we have
\begin{eqnarray}
\int_{L'}^{L}\log\left(\dfrac{na_nxf(a_nx)\overline{F}(a_nx)}{x\overline{F}(a_nx)}\right)g_n(x)dx &=&\int_{L'}^{L}\log\left(\dfrac{a_nxf(a_nx)}{\overline{F}(a_nx)}\right)g_n(x)dx \nonumber\\
&&+\int_{L'}^{L}\log\left(\dfrac{n\overline{F}(a_nx)}{x}\right)g_n(x)dx \nonumber\\
	&=&I_{A}(n)+I_{B}(n), \;\mbox{say.}\nonumber
\end{eqnarray}
Here $\;I_{A}(n)=\int_{L'}^{L}\log\left(\dfrac{a_nxf(a_nx)}{\overline{F}(a_nx)}\right)g_n(x)dx.\;$ From (\ref{vonMises}) and (\ref{DenConv_frechet}), it follows that
\[\lim_{n\to\infty}\log\left(\dfrac{a_nxf(a_nx)}{\overline{F}(a_nx)}\right)na_nf(a_nx)F^{n-1}(a_nx)=\phi_\alpha(x)\log\alpha,\]
for large $n$ and $x \in [L', L],$ so that
\begin{eqnarray}
&&\abs{\log\left(\dfrac{a_nxf(a_nx)}{\overline{F}(a_nx)}\right)na_nf(a_nx)F^{n-1}(a_nx)-\phi_\alpha(x)\log\alpha}< 1, \mbox{which is equivalent to } \nonumber\\
&& -1+\phi_\alpha(x)\log\alpha<\log\left(\dfrac{a_nxf(a_nx)}{\overline{F}(a_nx)}\right)na_nf(a_nx)F^{n-1}(a_nx)<1+\phi_\alpha(x)\log\alpha.
\end{eqnarray}
Since $\;\int_{L'}^{L}(\phi_\alpha(x)\log\alpha+ 1) dx<\infty,\;$
by the DCT,
\begin{eqnarray}\label{EntropyFrechet_e1}
\lim_{L'\to 0}\lim_{L\to\infty} \lim_{n\to\infty}I_{A}(n)&=&\lim_{L'\to 0}\lim_{L\to\infty}\lim_{n\to\infty}\int_{L'}^{L}\log\left(\dfrac{a_nxf(a_nx)}{\overline{F}(a_nx)}\right)na_nf(a_nx)F^{n-1}(a_nx)dx,\nonumber\\
&=&\int_{0}^{\infty}\lim_{n\to\infty}\log\left(\dfrac{a_nxf(a_nx)}{\overline{F}(a_nx)}\right)na_nf(a_nx)F^{n-1}(a_nx)dx,\nonumber\\
&=&\int_{0}^{\infty}\log\alpha\phi_\alpha(x)dx=\log\alpha.
\end{eqnarray}
Next, $\; I_{B}(n) = \int_{L'}^{L}\log\left(\dfrac{n\overline{F}(a_nx)}{x}\right)na_nf(a_nx)F^{n-1}(a_nx)dx.\;$ From (\ref{RegVar}) and (\ref{DenConv_frechet}), we have
\[\lim_{n\to\infty}\log\left(\dfrac{n\overline{F}(a_nx)}{x}\right)na_nf(a_nx)F^{n-1}(a_nx)=-{(\alpha+1)}\log x\phi_{\alpha}(x),\]
and for large $n$ and $x\in [L', L],$
\begin{eqnarray}
&&\abs{\log\left(\dfrac{n\overline{F}(a_nx)}{x}\right)na_nf(a_nx)F^{n-1}(a_nx)+(\alpha+1)\phi_\alpha(x)\log x}< 1 \;\mbox{which is equivalent to}\nonumber\\
&& -1-(\alpha+1)\phi_\alpha(x)\log x<\log\left(\dfrac{n\overline{F}(a_nx)}{x}\right)na_nf(a_nx)F^{n-1}(a_nx)<1-(\alpha+1)\phi_\alpha(x)\log x.\nonumber
\end{eqnarray}
Since $\;\int_{L'}^{L}(1-(\alpha+1)\phi_\alpha(x)\log x)dx<\infty,\;$ by the DCT,
\begin{eqnarray}
\lim_{L'\to 0}\lim_{L\to\infty}\lim_{n\to\infty} I_{B}(n)&=&\lim_{L'\to 0}\lim_{L\to\infty}\lim_{n\to\infty}\int_{L'}^{L}\log\left(\dfrac{n\overline{F}(a_nx)}{x}\right)na_nf(a_nx)F^{n-1}(a_nx)dx,\nonumber\\
&=&\int_{0}^{\infty}\lim_{n\to\infty}\log\left(\dfrac{n\overline{F}(a_nx)}{x}\right)na_nf(a_nx)F^{n-1}(a_nx)dx,\nonumber\\
&=&\int_{0}^{\infty}\log(x^{-\alpha-1})\phi_\alpha(x)dx=-(\alpha+1)\int_{0}^{\infty}\log x\phi_\alpha(x)dx. \nonumber
\end{eqnarray}
Substituting \;$x^{-\alpha}=u,\;$ we get $\;-\alpha x^{-\alpha-1}dx=du,\;$ and
\begin{eqnarray}\label{EntropyFrechet_e2}
\lim_{L'\to 0}\lim_{L\to\infty}\lim_{n\to\infty} I_{B}(n)&=&\dfrac{\alpha+1}{\alpha}\int_{0}^{\infty}\log u e^{-u}du =-\dfrac{\alpha+1}{\alpha}\gamma.
\end{eqnarray}

	From (\ref{EntropyFrechet_e1}) and (\ref{EntropyFrechet_e2}),  we therefore have
\begin{eqnarray}
\lim_{n\to\infty}H(g_n)&=& 1 - \lim_{n\to\infty} I_2(n) = 1-\log\alpha+\dfrac{\alpha+1}{\alpha}\gamma=H(\phi_{\alpha}),
\end{eqnarray} completing the proof of the first part.

From (\ref{Introduction_e4}), we have,
\begin{eqnarray}
0\leq D(g_n\|\phi_\alpha)=-H(g_n)+\Delta_{\phi_\alpha}(g_n).\nonumber
\end{eqnarray}
where $\Delta_{\phi_\alpha}(g_n)=-\int_{\epsilon_n}^{\infty}\log\phi_\alpha(x)g_n(x) dx$ and $supp(g_n)\subseteq supp(\phi_\alpha).$

From (\ref{DenConv_frechet})
\[\lim_{n\to\infty}g_n(x)\log\phi_\alpha(x)=\phi_\alpha(x)\log\phi_\alpha(x),\]
 for large $n$ and $x\in [L',L]$ with $L',L>0,$
\begin{eqnarray}
&&\abs{g_n(x)\log \phi_\alpha(x)- \phi_\alpha(x)\log\phi_\alpha(x)}< 1,\nonumber\\
&&\Leftrightarrow -1+\phi_\alpha(x)\log\phi_\alpha(x)<g_n(x)\log \phi_\alpha(x)<1+ \phi_\alpha(x)\log\phi_\alpha(x)
\end{eqnarray}
Since
\[\int_{L'}^{L}(\phi_\alpha(x)\log\phi_\alpha(x)+ 1)dx<\infty,\]
by the DCT,
\begin{eqnarray}\label{RelativeFrechet_e1}
\lim_{n\to\infty}\Delta_{\phi_\alpha}(g_n)&=&-\lim_{L'\to 0}\lim_{L\to\infty}\lim_{n\to\infty}\int_{L'}^{L}g_n(x)\log \phi_\alpha(x)dx,\nonumber\\
&=&-\int_{0}^{\infty}\lim_{n\to\infty}g_n(x)\log \phi_\alpha(x)dx,\nonumber\\
&=&-\int_{0}^{\infty}\phi_\alpha(x)\log\phi_\alpha(x)dx,\nonumber\\
&=&H(\phi_\alpha).
\end{eqnarray}
	From, Theorem \ref{TPhi}(i) and (\ref{RelativeFrechet_e1})
\[\lim_{n\to\infty}D(g_n\|\phi_\alpha)=-\lim_{n\to\infty}H(g_n)+\lim_{n\to\infty}\Delta_{\phi_\alpha}(g_n)=0.\]

For the last part of the proof, from (\ref{Introduction_e3}) we can choose $\;a_n\;$ to be increasing so that $\;0<\epsilon_{n}<\epsilon_{n-1}.\;$ We then have
\begin{eqnarray}\label{inc_Fe1}
H(g_n)-H(g_{n-1})&=&\int_{\epsilon_{n-1}}^{\infty}\log g_{n-1}(x)g_{n-1}(x)dx-\int_{\epsilon_n}^{\infty}\log g_n(x)g_n(x)dx,\nonumber\\
&=&\int_{\epsilon_{n-1}}^{\infty}\log g_{n-1}(x)g_{n-1}(x)dx, \nonumber\\
&&-\int_{\epsilon_n}^{\epsilon_{n-1}}\log g_n(x)g_{n}(x)dx-\int_{\epsilon_{n-1}}^{\infty}\log g_n(x)g_{n}(x)dx,\nonumber\\
&\geq&\int_{\epsilon_{n-1}}^{\infty}(\log g_{n-1}(x)g_{n-1}(x)-\log g_n(x)g_{n}(x))dx,\nonumber\\
&&-\int_{\epsilon_n}^{\epsilon_{n-1}}\log g_n(x)g_{n-1}(x)dx>0,\nonumber
\end{eqnarray}
where, $g_n(x),$ is nonincreasing for $n$ large and $g_{n-1}(x)=0$ for $x\in (\epsilon_n,\epsilon_{n-1}).$ Therefore, $H(g_n)$ is increasing in $n.$
\end{proof}

\begin{proof}[\textbf{Proof of Theorem \ref{TPhi} (b)}]
Since $F \in \mathcal{D}(\Psi_\alpha),$ from Proposition 1.13 in Resnick (1987), $r(F) < \infty,$ and $F^{*}(t) = 1 - F(r(F) - \frac{1}{t})$ is regularly varying so that
\begin{eqnarray} \label{RegVar_w}
\lim_{t \rightarrow \infty} \frac{\overline{F}(r(F)-\frac{1}{tx})}{\overline{F}(r(F)-\frac{1}{t})} & = &  (-x)^{\alpha}, \;\; x < 0; \;\;\;\; \mbox{and} \\
\lim_{n \rightarrow \infty} F^n(a_n x+b_n) & = & \Psi_{\alpha}(x), \; x \in\Real,  \label{MaxDomPsi}
\end{eqnarray}
with $\; a_n = r(F)-F^{-1}(1 - \frac{1}{n}) =r(F)-\inf\{x: F(x) > 1 - \frac{1}{n}\}$ and $b_n=r(F), \, n \geq 1.\;$
Further, since $f$ is nonincreasing near $r(F),$ from Proposition 1.15 in Resnick (1987), $F$ satisfies the von Mises condition (Theorem \ref{thm_von}):
\begin{equation} \label{vonMises_w}
\lim_{x \uparrow r(F)} \frac{(r(F)-x) f(x)}{\overline{F}(x)} = \alpha.
\end{equation}
Now, by Proposition 2.5 (b) in Resnick (1987) (reproduced as Theorem \ref{gn_conv}), (\ref{vonMises_w}) implies the following density convergence on compact sets:
\begin{equation} \label{DenConv}
\lim_{n \rightarrow \infty} g_n(x) = \psi_{\alpha}(x), \; x \in S \subset\Real,
\end{equation} where $\; S \;$ is a compact set, and $\;g_n\;$ is as in (\ref{Introduction_e2}) with $\;b_n = r(F).\;$
From (\ref{1.5}), we have
\begin{eqnarray} \lim_{n \rightarrow \infty} H(g_n)  & = & 1 - \lim_{n \rightarrow \infty} I_2(n), \;\; \text{where} \nonumber \\ \lim_{n \rightarrow \infty} I_2(n) & = & \lim_{n \rightarrow \infty} \int_{-\infty}^{0}\log(a_nnf(a_nx+b_n))dF^{n}(a_nx+b_n) \nonumber \\ & = & \lim_{L \rightarrow - \infty} \lim_{n \rightarrow \infty} \int_{L}^{0}\log(a_nnf(a_nx+b_n))dF^{n}(a_nx+b_n). \nonumber
\end{eqnarray}
 Now for constant $L<0,$ we have
\begin{eqnarray}
&&\int_{L}^{0}\log\left(\dfrac{n(-a_nx)f(a_nx+b_n)\overline{F}(a_nx+b_n)}{(-x)\overline{F}(a_nx+b_n)}\right)dF^{n}(a_nx+b_n) \nonumber\\
&=&\int_{L}^{0}\log\left(\dfrac{(-a_nx)f(a_nx+b_n)}{\overline{F}(a_nx+b_n)}\right)dF^{n}(a_nx+b_n)+\int_{L}^{0}\log\left(\dfrac{n\overline{F}(a_nx+b_n)}{(-x)}\right)dF^{n}(a_nx+b_n),\nonumber\\
	&=&I_{A}(n)+I_{B}(n), \mbox{say.}\nonumber
\end{eqnarray}
	We have $\;I_{A}(n)=\int_{L}^{0}\log\left(-\dfrac{a_nxf(a_nx+b_n)}{\overline{F}(a_nx+b_n)}\right)na_nf(a_nx+b_n)F^{n-1}(a_nx+b_n)dx\;$ and from (\ref{vonMises_w}) and (\ref{DenConv}), since $a_n x + b_n \rightarrow r(F)$ as $n \rightarrow \infty,$
\begin{eqnarray*} \lim_{n\to\infty}\log\left(-\dfrac{a_nxf(a_nx+b_n)}{\overline{F}(a_nx+b_n)}\right)na_nf(a_nx+b_n)F^{n-1}(a_nx+b_n) & = & \log\alpha\psi_\alpha(x),\;x<0. \end{eqnarray*}
So, for large $n$ and $x\in [L,0],$
$$\abs{\log\left(-\dfrac{a_nxf(a_nx+b_n)}{\overline{F}(a_nx+b_n)}\right)na_nf(a_nx+b_n)F^{n-1}(a_nx+b_n)-\psi_\alpha(x)\log\alpha} < 1$$ which is equivalent to
$$ -1+\psi_\alpha(x)\log\alpha<\log\left(-\dfrac{a_nxf(a_nx+b_n)}{\overline{F}(a_nx+b_n)}\right)na_nf(a_nx+b_n)F^{n-1}(a_nx+b_n)
<1+\psi_\alpha(x)\log\alpha.$$
Since $\;\int_{L}^{0}(\psi_\alpha(x)\log\alpha+ 1)dx<\infty,\;$ by the DCT,
\begin{eqnarray}\label{EntropyWeibull_e1}
\lim_{L\to -\infty}\lim_{n\to\infty}I_A(n)&=&\lim_{L\to -\infty} \lim_{n\to\infty}\int_{L}^{0}\log\left(-\dfrac{a_nxf(a_nx+b_n)}{\overline{F}(a_nx+b_n)}\right)na_nf(a_nx+b_n)F^{n-1}(a_nx+b_n)dx,\nonumber\\
&=&\int_{-\infty}^{0}\lim_{n\to\infty}\log\left(-\dfrac{a_nxf(a_nx+b_n)}{\overline{F}(a_nx+b_n)}\right)na_nf(a_nx+b_n)F^{n-1}(a_nx+b_n)dx,\nonumber\\
&=&\int_{-\infty}^{0}\log\alpha d\Psi_\alpha(x) =\log\alpha.
\end{eqnarray}
	Next, $\; I_{B}(n)=\int_{L}^{0}\log\left(-\dfrac{n\overline{F}(a_nx+b_n)}{x}\right)na_nf(a_nx+b_n)F^{n-1}(a_nx+b_n)dx.$
We have
\[\lim_{n\to\infty}\log\left(-\dfrac{n\overline{F}(a_nx+b_n)}{x}\right)na_nf(a_nx+b_n)F^{n-1}(a_nx+b_n)=(\alpha-1)\log(-x)\psi_\alpha(x).\]
By the Proposition 0.5 in Resnick (1987), $\;\lim_{n\to\infty}n\overline{F}(a_nx+b_n)=(-x)^{\alpha}, x < 0 \; $ and for large $n$ and $x\in[L,0]$ we have
$$\abs{\log\left(-\dfrac{n\overline{F}(a_nx+b_n)}{x}\right)na_nf(a_nx+b_n)F^{n-1}(a_nx+b_n)-\psi_\alpha(x)(\alpha-1)\log(-x)}<1,$$ $$\Leftrightarrow -1+\psi_\alpha(x)(\alpha-1)\log(-x)<\log\left(-\dfrac{n\overline{F}(a_nx+b_n)}{x}\right)na_nf(a_nx+b_n)F^{n-1}(a_nx+b_n)
<1+\psi_\alpha(x)(\alpha-1)\log (-x).$$
Since $\; \int_{L'}^{0}(\psi_\alpha(x)(\alpha-1)\log(-x)+ 1)dx<\infty,\;$ by the DCT,
\begin{eqnarray}
\lim_{L\to-\infty}\lim_{n\to\infty} I_B(n)&=&\lim_{L\to -\infty} \lim_{n\to\infty}\int_{L}^{0}\log\left(-\dfrac{n\overline{F}(a_nx+b_n)}{x}\right)na_nf(a_nx+b_n)F^{n-1}(a_nx+b_n)dx,\nonumber\\
&=&\int_{-\infty}^{0}\lim_{n\to\infty}\log\left(-\dfrac{n\overline{F}(a_nx+b_n)}{x}\right)na_nf(a_nx+b_n)F^{n-1}(a_nx+b_n)dx,\nonumber\\
&=&(\alpha-1)\int_{-\infty}^{0}\log(-x)\alpha (-x)^{\alpha-1}e^{-(-x)^{\alpha}}dx. \nonumber
\end{eqnarray}
	Putting $\; (-x)^{\alpha}=u,\;$ we have  $\;-\alpha (-x)^{\alpha-1}dx=du\;$ and
\begin{eqnarray}\label{EntropyWeibull_e2}
\lim_{L\to-\infty}\lim_{n\to\infty} I_B(n) =\dfrac{\alpha-1}{\alpha}\int_{0}^{\infty}\log ue^{-u}du&=&-\dfrac{\alpha-1}{\alpha}\gamma.
\end{eqnarray}
 From (\ref{EntropyWeibull_e1}) and (\ref{EntropyWeibull_e2}), we have
\[\lim_{n\to\infty}H(g_n)=1-\log\alpha+\dfrac{\alpha-1}{\alpha}\gamma=H(\psi_\alpha).\]

 From (\ref{Introduction_e4}), we have,
\begin{eqnarray}
0\leq D(g_n\|\psi_\alpha)=-H(g_n)+\Delta_{\psi_\alpha}(g_n).\nonumber
\end{eqnarray}
where, $\Delta_{\psi_\alpha}(g_n)=-\int_{-\infty}^{\nu_n}\log\psi_\alpha(x)dF^{n}(a_nx+b_n),$ and $supp(g_n)\subseteq supp(\psi_\alpha).$

From (\ref{DenConv})
\[\lim_{n\to\infty}g_n(x)\log \psi_\alpha(x)=\psi_\alpha(x)\log\psi_\alpha(x),\]
 for large $n$ and $x\in [L',L]$ with $L',L<0,$
\begin{eqnarray}
&&\abs{g_n(x)\log \psi_\alpha(x)-\psi_\alpha(x)\log\psi_\alpha(x)}< 1\nonumber\\
&&\Leftrightarrow -1+\psi_\alpha(x)\log\psi_\alpha(x)<g_n(x)\log \psi_\alpha(x)-\psi_\alpha(x)\log\psi_\alpha(x)<1+\psi_\alpha(x)\log\psi_\alpha(x).\nonumber
\end{eqnarray}
Since
\[\int_{L'}^{L}(\psi_\alpha(x)\log\psi_\alpha(x)+ 1)dx<\infty,\]
by the DCT,
\begin{eqnarray}\label{RelativeWeibull_e1}
\lim_{n\to\infty}\Delta_{\psi_\alpha}(g_n)&=&-\lim_{L'\to -\infty}\lim_{L\to 0}\lim_{n\to\infty}\int_{L'}^{L}g_n(x)\log \psi_\alpha(x)dx,\nonumber\\
&=&-\int_{-\infty}^{0}\lim_{n\to\infty}g_n(x)\log \psi_\alpha(x)dx,\nonumber\\
&=&-\int_{-\infty}^{0}\psi_\alpha(x)\log\psi_\alpha(x)dx,\nonumber\\
&=& H(\psi_\alpha).
\end{eqnarray}

From Theorem \ref{TPhi}(ii) and (\ref{RelativeWeibull_e1})
\[\lim_{n\to\infty}D(g_n\|\psi_\alpha)=-\lim_{n\to\infty}H(g_n)+\lim_{n\to\infty}\Delta_{\psi_\alpha}(g_n)=0.\]
For the last part of the proof, from (\ref{Introduction_e3}), we have
\begin{eqnarray}\label{inc_we1}
H(g_n)-H(g_{n-1})&=&\int_{-\infty}^{0}\log g_{n-1}(x)g_{n-1}(x)dx-\int_{-\infty}^{0}\log g_n(x)g_n(x)dx\geq 0,\nonumber
\end{eqnarray}
where, $g_n(x)$ is nonincreasing. Therefore, $H(g_n)$ is increasing.
\end{proof}
\begin{proof}[\textbf{Proof of Theorem \ref{TPhi} (c)}]
Since $F \in \mathcal{D}(\Lambda), \; r(F) \leq \infty,\;$ and from Proposition 1.1 in Resnick (1987), $1 - F$ is $\Gamma$ varying so that
\begin{eqnarray} \label{RegVar_g}
\lim_{t \rightarrow r(F)} \frac{\overline{F}(t+xu(t))}{\overline{F}(t)} & = &  e^{-x}, \;\; x\in\Real,
\end{eqnarray}
where the function $u(t)=\int_{t}^{r(F)}\overline{F}(s)ds/\overline{F}(t)$ is called an auxiliary function. We also have
\begin{eqnarray}
\lim_{n \rightarrow \infty} F^n(a_n x+b_n) & = & \Lambda(x), \; x \in\Real,  \label{MaxDomLambda}
\end{eqnarray}
with $\; a_n = u(b_n)$ and $b_n=F^{-1}(1 - \frac{1}{n}) = \inf\{x: F(x) > 1 - \frac{1}{n}\}, \, n \geq 1.\;$
Further, since $f$ is nonincreasing, from Proposition 1.17 in Resnick (1987), $F$ satisfies the von Mises condition (Theorem \ref{thm_von}):
\begin{equation} \label{vonMises_g}
\lim_{x \rightarrow r(F)} \dfrac{f(x)\int_{x}^{r(F)}\overline{F}(t)dt}{\overline{F}(x)^2} = 1.
\end{equation}
Now, by Proposition 2.5(c) in Resnick (1987) (Theorem \ref{gn_conv}), (\ref{vonMises_g}) implies the following density convergence on compact sets:
\begin{equation} \label{DenConv_g}
\lim_{n \rightarrow \infty} g_n(x) = \lambda(x), \; x \in S \subset \Real,
\end{equation} where $\; S \;$ is a compact set, and $\;g_n\;$ is as in (\ref{Introduction_e2}).
From (\ref{1.5}), we have
\begin{eqnarray} \lim_{n \rightarrow \infty} H(g_n) & = & 1 - \lim_{n \rightarrow \infty} I_2(n), \;\; \text{where} \nonumber \\
	I_2(n)&=&\int_{-\infty}^{\infty}\log(a_nnf(a_nx+b_n))dF^{n}(a_nx+b_n),\nonumber\\
 &=&\int_{-\infty}^{\infty}\log\left(\dfrac{nu(b_n)u(a_nx+b_n)f(a_nx+b_n)\overline{F}(a_nx+b_n)}{u(a_nx+b_n)
 \overline{F}(a_nx+b_n)}\right)dF^{n}(a_nx+b_n), \nonumber \\  & = & \lim_{L \rightarrow \infty} \int_{-L}^{L}\log\left(\dfrac{nu(b_n)u(a_nx+b_n)f(a_nx+b_n)\overline{F}(a_nx+b_n)}{u(a_nx+b_n)\overline{F}(a_nx+b_n)}\right)dF^{n}(a_nx+b_n). \nonumber
\end{eqnarray}
For constants $\;L>0,\;$ we then have,
\begin{eqnarray}
&&\int_{-L}^{L}\log\left(\dfrac{nu(b_n)u(a_nx+b_n)f(a_nx+b_n)\overline{F}(a_nx+b_n)}{u(a_nx+b_n)\overline{F}(a_nx+b_n)}\right)dF^{n}(a_nx+b_n)\nonumber\\
 &=&\int_{-L}^{L}\log\left(\dfrac{u(b_n)}{u(a_nx+b_n)}\right)dF^n(a_nx+b_n)+\int_{-L}^{L}\log\left(\dfrac{u(a_nx+b_n)f(a_nx+b_n)}{\overline{F}(a_nx+b_n)}\right)dF^{n}(a_nx+b_n) \nonumber\\
	&&+\int_{-L}^{L}\log(n\overline{F}(a_nx+b_n))dF^{n}(a_nx+b_n),\nonumber\\
	&=&I_{A}(n)+I_{B}(n)+I_{C}(n), \;\;\text{say.}\nonumber
\end{eqnarray}
	We have
\[I_{A}(n)=\int_{-L}^{L}\log\left(\dfrac{u(b_n)}{u(a_nx+b_n)}\right)a_nnf(a_nx+b_n)F^{n-1}(a_nx+b_n)dx.\]
	Using Theorem (\ref{lem2_appendix}),
\[\lim_{n\to\infty}\log\left(\dfrac{u(b_n)}{u(a_nx+b_n)}\right)a_nnf(a_nx+b_n)F^{n-1}(a_nx+b_n)=0\]
	locally uniformly in $\;x\in\Real.\;$ And for large $n$ and $x\in[-L,L],$
\begin{eqnarray}
&&\abs{\log\left(\dfrac{u(b_n)}{u(a_nx+b_n)}\right)a_nnf(a_nx+b_n)F^{n-1}(a_nx+b_n)}< 1,\nonumber\\
&&\Leftrightarrow -1<\log\left(\dfrac{u(b_n)}{u(a_nx+b_n)}\right)a_nnf(a_nx+b_n)F^{n-1}(a_nx+b_n)<1.\nonumber
\end{eqnarray}
Since $\;\int_{-L}^{L}dx< \infty,\;$ by the DCT, we have
\begin{eqnarray}\label{EntropyGumbel_e1}
 \lim_{L \rightarrow \infty} \lim_{n\to\infty}I_A(n)&=&- \lim_{L \rightarrow \infty} \lim_{n\to\infty}\int_{-L}^{L}\log\left(\dfrac{u(a_nx+b_n)}{u(b_n)}\right)a_nnf(a_nx+b_n)F^{n-1}(a_nx+b_n)dx,\nonumber\\
	&=&- \int_{-\infty}^{\infty}\lim_{n\to\infty}\log\left(\dfrac{u(a_nx+b_n)}{u(b_n)}\right)a_nnf(a_nx+b_n)F^{n-1}(a_nx+b_n)dx,\nonumber\\
	&=& 0.
\end{eqnarray}
 Next,
\begin{eqnarray}
	I_{B}(n)&=&\int_{-L}^{L}\log\left(\dfrac{u(a_nx+b_n)f(a_nx+b_n)}{\overline{F}(a_nx+b_n)}\right)a_nnf(a_nx+b_n)F^{n-1}(a_nx+b_n)dx. \nonumber
\end{eqnarray}
We have
\[\lim_{n\to\infty}\log\left(\dfrac{u(a_nx+b_n)f(a_nx+b_n)}{\overline{F}(a_nx+b_n)}\right)a_nnf(a_nx+b_n)F^{n-1}(a_nx+b_n)=0.\]
Hence by Theorem (\ref{lem3_appendix}), for large $n$ and $x\in[-L,L],$
\begin{eqnarray}
&&\abs{\log\left(\dfrac{u(a_nx+b_n)f(a_nx+b_n)}{\overline{F}(a_nx+b_n)}\right)a_nnf(a_nx+b_n)F^{n-1}(a_nx+b_n)}< 1\nonumber\\
&&\Leftrightarrow -1<\log\left(\dfrac{u(a_nx+b_n)f(a_nx+b_n)}{\overline{F}(a_nx+b_n)}\right)a_nnf(a_nx+b_n)F^{n-1}(a_nx+b_n)<1.\nonumber
\end{eqnarray}
Since $\;\int_{-L}^{L}dx<\infty,\;$ by the DCT, we have
\begin{eqnarray}\label{EntropyGumbel_e2}
  \lim_{L \rightarrow \infty} \lim_{n\to\infty}I_{B}(n)&=& \lim_{L \rightarrow \infty} \lim_{n\to\infty}\int_{-L}^{L}\log\left(\dfrac{u(a_nx+b_n)f(a_nx+b_n)}{\overline{F}(a_nx+b_n)}\right)\nonumber\\
 && a_nnf(a_nx+b_n)F^{n-1}(a_nx+b_n)dx,\nonumber\\
&=&\int_{-\infty}^{\infty}\lim_{n\to\infty}\log\left(\dfrac{u(a_nx+b_n)f(a_nx+b_n)}{\overline{F}(a_nx+b_n)}\right)a_nnf(a_nx+b_n)F^{n-1}(a_nx+b_n)dx,\nonumber\\
&=& 0.
\end{eqnarray}
	Finally,
\begin{eqnarray}
	I_{C}(n)&=&\int_{-L}^{L}\log(n\overline{F}(a_nx+b_n))a_nnf(a_nx+b_n)F^{n-1}(a_nx+b_n)dx.\nonumber
\end{eqnarray}
We have
\[\lim_{n\to\infty}\log(n\overline{F}(a_nx+b_n))a_nnf(a_nx+b_n)F^{n-1}(a_nx+b_n)=-x\lambda(x).\]
If $F\in\Gamma,$ then $n\overline{F}(a_nx+b_n)\to e^{-x}$ as $n\to\infty,$ and hence, for large $n$ and $x\in[-L,L],$
\begin{eqnarray}
&&\abs{\log(n\overline{F}(a_nx+b_n))a_nnf(a_nx+b_n)F^{n-1}(a_nx+b_n)+x\lambda(x)}< 1\nonumber\\
&&\Leftrightarrow -1+x\lambda(x)\log(n\overline{F}(a_nx+b_n))a_nnf(a_nx+b_n)F^{n-1}(a_nx+b_n)<1+x\lambda(x).\nonumber
\end{eqnarray}
Since $\;\int_{-L}^{L}(-x\lambda(x)+ 1)dx< \infty,\;$ by the DCT,  we have
\begin{eqnarray}\label{EntropyGumbel_e3}
 \lim_{L \rightarrow \infty} \lim_{n\to\infty}I_C(n)&=&  \lim_{L \rightarrow \infty} \lim_{n\to\infty}\int_{-L}^{L}a_nnf(a_nx+b_n)F^{n-1}(a_nx+b_n)\log(n\overline{F}(a_nx+b_n))dx,\nonumber\\
	&=&\int_{-\infty}^{\infty}\lim_{n\to\infty}a_nnf(a_nx+b_n)F^{n-1}(a_nx+b_n)\log(n\overline{F}(a_nx+b_n))dx,\nonumber\\
	&=&-\int_{-\infty}^{\infty}xe^{-x}e^{-e^{-x}}dx,\nonumber\\
	&=&\gamma.
\end{eqnarray}
	From (\ref{EntropyGumbel_e1}), (\ref{EntropyGumbel_e2}), (\ref{EntropyGumbel_e3}) we have,
\begin{eqnarray}
\lim_{n\to\infty}H(g_n)\,=\,1+\,\gamma\,=\,H(\lambda).\nonumber
\end{eqnarray}

From (\ref{Introduction_e4}), we have,
\begin{eqnarray}
0\leq D(g_n\|\lambda)=-H(g_n)+\Delta_{\lambda}(g_n).\nonumber
\end{eqnarray}
where, $\Delta_{\lambda}(g_n)=-\int_{-\infty}^{\infty}\log(\lambda(x))dF^{n}(a_nx+b_n),$ and $supp(g_n)\subseteq supp(\lambda).$ Therefore,

From (\ref{DenConv_g}) for large $n$ and $x\in[-L,L],$ for $L>0$
\[\lim_{n\to\infty}g_n(x)\log \lambda(x)=\lambda(x)\log\lambda(x),\]

\begin{eqnarray}
\abs{g_n(x)\log \lambda(x)-\lambda(x)\log\lambda(x)}< 1\
\Leftrightarrow -1+\lambda(x)\log\lambda(x)<g_n(x)\log \lambda(x)<1+\lambda(x)\log\lambda(x).\nonumber
\end{eqnarray}
Since
\[\int_{-L}^{L}(\lambda(x)\log\lambda(x)+ 1)dx<\infty,\]
by the DCT,
\begin{eqnarray}\label{RelativeGumbel_e1}
\lim_{n\to\infty}\Delta_{\lambda}(g_n)&=&-\lim_{L\to\infty}\lim_{n\to\infty}\int_{-L}^{L}g_n(x)\log \lambda(x)dx,\nonumber\\
&=&-\int_{-\infty}^{\infty}\lim_{n\to\infty}g_n(x)\log \lambda(x)dx,\nonumber\\
&=&-\int_{-\infty}^{\infty}\lambda(x)\log\lambda(x)dx=H(\lambda).
\end{eqnarray}
	From Theorem \ref{TPhi}(iii) and (\ref{RelativeGumbel_e1})
\[\lim_{n\to\infty}D(g_n\|\lambda)=-\lim_{n\to\infty}H(g_n)+\lim_{n\to\infty}\Delta_{\lambda}(g_n)=0.\]

For the last part of the proof, from (\ref{Introduction_e3}) we have
\begin{eqnarray}\label{inc_ge1}
H(g_n)-H(g_{n-1})&=&\int_{-\infty}^{\infty}\log g_{n-1}(x) g_{n-1}(x)dx-\int_{-\infty}^{\infty}\log g_n(x) g_n(x)dx\geq 0.\nonumber
\end{eqnarray}
where, $g_n(x)$ in nonincreasing. Therefore $H(g_n)$ is increasing.
\end{proof}

\begin{proof}[\textbf{Proof of Lemma \ref{Lemma.5}}]
The pdf $g_{k:n}$ of $G_{k:n}$ in (\ref{Introduction_e1.3}) is
\begin{eqnarray}
g_{k:n}(x)&=&\dfrac{n!a_nf(a_nx+b_n)(n\overline{F}(a_nx+b_n))^{k-1}F^{n-1}(a_nx+b_n)}{(k-1)!n^{k-1}(n-k)!F^{k-1}(a_nx+b_n)},\nonumber\\
&=&\dfrac{n!g_n(x)(n\overline{F}(a_nx+b_n))^{k-1}}{(k-1)!n^{k}(n-k)!F^{k-1}(a_nx+b_n)}, \nonumber
\end{eqnarray}
where $g_n(x)=na_nf(a_nx+b_n)F^{n-1}(a_nx+b_n)$ is the pdf of $F^n(a_nx+b_n).$ By Theorem \ref{gn_conv} and (\ref{Introduction_e1.2})
\begin{eqnarray}\label{order_density}
\lim_{n\to\infty}g_{k:n}(x)&=&\lim_{n\to\infty}\dfrac{n!g_n(x)(n\overline{F}(a_nx+b_n))^{k-1}}{(k-1)!n^{k}(n-k)!F^{k-1}(a_nx+b_n)},\nonumber\\
&=&g(x)\dfrac{(-\log G(x))^{k-1}}{(k-1)!} = K_k^{\prime}(x).
\end{eqnarray}
\end{proof}
\begin{proof}[\textbf{Proof of Lemma \ref{Lemma3}}]
For $k=1,$ we have 
\begin{eqnarray}
A(1)&=&\int_{0}^{\infty} e^{-u}\log u du=-\gamma.
\end{eqnarray}
For $k=2,$ we have 
\begin{eqnarray}
A(2) &=& \int_{0}^{\infty} ue^{-u}\log u du \nonumber \\ 
&=&\int_{0}^{\infty}\log ue^{-u}du+\int_{0}^{\infty}e^{-u}du, \text{using integration by parts,} \nonumber\\
&=&A(1)+\Gamma(1)=1-\gamma.\nonumber
\end{eqnarray}
Assuming the result for arbitrary $k-1,$ we have 
\begin{eqnarray}\label{lemma3_A}
A(k-1)&=& (k-2)!\left(-\gamma+\sum_{i=1}^{k-2}\dfrac{1}{i}\right),\nonumber\\
&=&-\gamma(k-2)!+\dfrac{(k-2)!}{k-2}+(k-2)\dfrac{(k-3)!}{k-3}+[(k-2)(k-3)]\dfrac{(k-4)!}{k-4}\nonumber\\
&&+\ldots+[(k-2)(k-3)(k-4)\ldots 5]\dfrac{4!}{4}+[(k-2)(k-3)\ldots 4]\dfrac{3!}{3}+(k-2)!,\nonumber\\
&=&-\gamma(k-2)!+\Gamma(k-2)+(k-2)\Gamma(k-3)+[(k-2)(k-3)]\Gamma(k-4),\nonumber\\
&&+\ldots+[(k-2)(k-3)(k-4)\ldots 5]\Gamma(4)+[(k-2)(k-3)\ldots 4]\Gamma(3)\nonumber\\
&&+[(k-2)(k-3)\ldots 3]\Gamma(2),\nonumber\\
&=&(k-2)A(k-2)+\Gamma(k-2).
\end{eqnarray}
We then have, 
\begin{eqnarray}
A(k)&=&\int_{0}^{\infty}u^{k-1}\log u e^{-u}du, \nonumber \\ 
&=&(k-1)\int_{0}^{\infty}u^{k-2}\log ue^{-u}du+\int_{0}^{\infty}u^{k-2}e^{-u}du,\nonumber\\
&=&(k-1)A(k-1)+\Gamma(k-1), \nonumber \\
&=&(k-1)!\left(-\gamma+\sum_{i=1}^{k-2}\dfrac{1}{i}\right)+\Gamma(k-1) \;\text{using}\; (\ref{lemma3_A}),\nonumber\\
&=&-\gamma(k-1)!+\dfrac{(k-1)!}{k-1}+(k-1)\dfrac{(k-2)!}{k-2}+[(k-1)(k-2)]\dfrac{(k-3)!}{k-3}\nonumber\\
&&+\ldots+[(k-1)(k-2)(k-3)\ldots 4]\dfrac{3!}{3}+[(k-1)(k-2)\ldots 3]\dfrac{2!}{2}+(k-1)!,\nonumber\\
&=&(k-1)!(-\gamma+\dfrac{1}{k-1}+\dfrac{1}{k-2}+\dfrac{1}{k-3}+\ldots+\dfrac{1}{3}+\dfrac{1}{2}+1),\nonumber\\
&=&(k-1)!\left(-\gamma+\sum_{i=1}^{k-1}\dfrac{1}{i}\right),\;k\geq 2.\nonumber
\end{eqnarray} Hence, by induction, the proof is complete. 
\end{proof}

\begin{proof}[\textbf{Proof of Lemma \ref{TPhi_orderEnt} (i)}] From (\ref{order_density}), $\;\phi_\alpha^{(k)}(x)=\dfrac{\alpha}{(k-1)!} x^{-\alpha k-1}e^{-x^{-\alpha}}, \;$ and hence the entropy of $\phi_\alpha^{(k)}(x)$ is
\begin{eqnarray}
H(\phi_\alpha^{(k)})&=&-\int_{0}^{\infty}\dfrac{\alpha}{(k-1)!} x^{-\alpha k-1}e^{-x^{-\alpha}}\log\left(\dfrac{\alpha}{(k-1)!} x^{-\alpha k-1}e^{-x^{-\alpha}}\right)dx.\nonumber
\end{eqnarray}
Putting $x^{-\alpha}=u,$ we have $-\alpha x^{-\alpha-1}dx=du,$ and 
\begin{eqnarray}
H(\phi_\alpha^{(k)}) &=&-\int_{0}^{\infty}\dfrac{1}{(k-1)!} u^{k-1}e^{-u}\log\left(\dfrac{\alpha}{(k-1)!} u^{\frac{\alpha k+1}{\alpha}}e^{-u}\right)du,\nonumber\\
&=&-\int_{0}^{\infty}\dfrac{1}{(k-1)!} u^{k-1}e^{-u}\log\dfrac{\alpha}{(k-1)!}du \nonumber\\
 &&-\dfrac{\alpha k+1}{\alpha(k-1)!}\int_{0}^{\infty}u^{k-1}e^{-u}\log u du +\int_{0}^{\infty}\dfrac{u^{k}e^{-u}}{(k-1)!}du, \nonumber\\
&=&-\log\dfrac{\alpha}{(k-1)!}+I_A+\dfrac{1}{(k-1)!}\Gamma(k+1),\label{ent_g1}
\end{eqnarray}
where 
\begin{eqnarray}
I_A&=&-\dfrac{\alpha k+1}{\alpha(k-1)!}\int_{0}^{\infty}u^{k-1}e^{-u}\log u du =-\dfrac{\alpha k+1}{\alpha(k-1)!}A(k), \nonumber \\ 
&=&-\dfrac{\alpha k+1}{\alpha}\left(-\gamma+\sum_{i=1}^{k-1}\dfrac{1}{i}\right), \label{order_frechet_I1}
\end{eqnarray}
using Lemma \ref{Lemma3}. From (\ref{ent_g1}) and (\ref{order_frechet_I1}), 
\begin{eqnarray}
H(\phi_\alpha^{(k)})=-\log\dfrac{\alpha}{(k-1)!}-\dfrac{\alpha k+1}{\alpha}\left(-\gamma+\sum_{i=1}^{k-1}\dfrac{1}{i}\right)+\dfrac{\Gamma(k+1)}{(k-1)!}.
\end{eqnarray}
\end{proof}
\begin{proof}[\textbf{Proof of Lemma \ref{TPhi_orderEnt} (ii)}] From (\ref{order_density}), $\;\psi_\alpha^{(k)}(x)=\dfrac{\alpha}{(k-1)!} (-x)^{\alpha k-1}e^{-(-x)^{\alpha}}\;$ and the entropy of $\psi_\alpha^{(k)}(x)$ is
\begin{eqnarray}
H(\psi_\alpha^{(k)})&=&-\int_{-\infty}^{0}\dfrac{\alpha}{(k-1)!} (-x)^{\alpha k-1}e^{-(-x)^{\alpha}}\log\left(\dfrac{\alpha}{(k-1)!} (-x)^{\alpha k-1}e^{-(-x)^{\alpha}}\right)dx.\nonumber
\end{eqnarray}
Putting $(-x)^{\alpha}=u,$ we have $-\alpha (-x)^{\alpha-1}dx=du$ and 
\begin{eqnarray}
H(\psi_\alpha^{(k)})&=&-\int_{0}^{\infty}\dfrac{1}{(k-1)!} u^{k-1}e^{-u}\log\left(\dfrac{\alpha}{(k-1)!} u^{\frac{\alpha k-1}{\alpha}}e^{-u}\right)du,\nonumber\\
&=&-\int_{0}^{\infty}\dfrac{1}{(k-1)!} u^{k-1}e^{-u}\log\dfrac{\alpha}{(k-1)!}du,\nonumber\\
 &&-\dfrac{\alpha k-1}{\alpha(k-1)!}\int_{0}^{\infty}u^{k-1}e^{-u}\log u du +\int_{0}^{\infty}\dfrac{u^{k}e^{-u}}{(k-1)!}du,\nonumber\\
&=&-\log\dfrac{\alpha}{(k-1)!}+I_A+\dfrac{1}{(k-1)!}\Gamma(k+1),\label{ent_g2}
\end{eqnarray}
where 
\begin{eqnarray}
I_A&=&-\dfrac{\alpha k-1}{\alpha(k-1)!}\int_{0}^{\infty}u^{k-1}e^{-u}\log u du = -\dfrac{\alpha k-1}{\alpha(k-1)!}A(k),\nonumber \\
&=&-\dfrac{\alpha k-1}{\alpha}\left(-\gamma+\sum_{i=1}^{k-1}\dfrac{1}{i}\right), \label{order_weibull_I1}
\end{eqnarray}
using Lemma \ref{Lemma3}. 
From (\ref{ent_g2}) and (\ref{order_weibull_I1}), 
\begin{eqnarray}
H(\psi_\alpha^{(k)})=-\log\dfrac{\alpha}{(k-1)!}-\dfrac{\alpha k-1}{\alpha}\left(-\gamma+\sum_{i=1}^{k-1}\dfrac{1}{i}\right)+\dfrac{\Gamma(k+1)}{(k-1)!}.
\end{eqnarray}
\end{proof}
\begin{proof}[\textbf{Proof Lemma \ref{TPhi_orderEnt} (iii)}] From (\ref{order_density}), $\;\lambda^{(k)}(x)=\dfrac{1}{(k-1)!} e^{-kx}e^{-e^{-x}}, \;$ and the entropy of $\lambda^{(k)}(x)$ is
\begin{eqnarray}
H(\lambda^{(k)})&=&-\int_{-\infty}^{\infty}\dfrac{1}{(k-1)!} e^{-kx}e^{-e^{-x}}\log\left(\dfrac{1}{(k-1)!} e^{-kx}e^{-e^{-x}}\right)dx. \nonumber
\end{eqnarray}
Putting $e^{-x}=u,$ we have $-e^{-x}dx=du$ and 
\begin{eqnarray}
H(\lambda^{(k)}) &=&-\int_{0}^{\infty}\dfrac{1}{(k-1)!} u^{k-1}e^{-u}\log\left(\dfrac{u^{k}e^{-u}}{(k-1)!}\right)du,\nonumber\\
&=&\int_{0}^{\infty}\dfrac{1}{(k-1)!} u^{k-1}e^{-u}\log(k-1)!du,\nonumber\\
&&-\dfrac{k}{(k-1)!}\int_{0}^{\infty}u^{k-1}e^{-u}\log u du+\int_{0}^{\infty}\dfrac{u^{k}e^{-u}}{(k-1)!}du,\nonumber\\
&=&\log(k-1)!+I_A+\dfrac{1}{(k-1)!}\Gamma(k+1),\label{ent_g3}
\end{eqnarray}
where 
\begin{eqnarray}
I_A&=&-\dfrac{k}{(k-1)!}\int_{0}^{\infty}u^{k-1}e^{-u}\log u du =-\dfrac{k}{(k-1)!}A(k),\nonumber \\
&=&-k\left(-\gamma+\sum_{i=1}^{k-1}\dfrac{1}{i}\right), \label{order_gumbel_I1}
\end{eqnarray}
using Lemma \ref{Lemma3}. 
From (\ref{ent_g3}) and (\ref{order_gumbel_I1}), 
\begin{eqnarray}
H(\lambda^{(k)})=\log(k-1)!-k\left(-\gamma+\sum_{i=1}^{k-1}\dfrac{1}{i}\right)+\dfrac{\Gamma(k+1)}{(k-1)!}.
\end{eqnarray}
\end{proof}

\begin{proof}[\textbf{Proof of Theorem \ref{TPhi_order} (i)}]
 Since $F \in \mathcal{D}(\Phi_\alpha),$ from Lemma \ref{Lemma.5}, $\;
\lim_{n \rightarrow \infty} g_n^{(k)}(x) $ \\ $=\phi_\alpha^{(k)}(x)=\dfrac{\alpha}{(k-1)!} x^{-\alpha k-1}e^{-x^{-\alpha}},\;$ and for large $n$ and $x\in [L',L];$ with $L,L' > 0$
\begin{eqnarray}
&&\abs{g_n^{(k)}(x)\log g_n^{(k)}(x)-\phi_\alpha^{(k)}(x)\log \phi_\alpha^{(k)}(x)}<1,\nonumber\\
&\Leftrightarrow& -1+\phi_\alpha^{(k)}(x)\log \phi_\alpha^{(k)}(x)<g_n^{(k)}(x)\log g_n^{(k)}(x)<1+\phi_\alpha^{(k)}(x)\log \phi_\alpha^{(k)}(x).\nonumber
\end{eqnarray}
Since $\;\int_{L'}^{L}(\phi_\alpha^{(k)}(x)\log \phi_\alpha^{(k)}(x)+1)dx<\infty, \;$ by the DCT, for $L',L>0$
\begin{eqnarray}\label{sorder_frechet_en}
\lim_{n\to\infty}H(g_n^{(k)})&=&-\lim_{L\to \infty}\lim_{L'\to 0}\lim_{n\to\infty}\int_{L'}^{L}g_n^{(k)}(x)\log g_n^{(k)}(x)dx,\nonumber\\
&=&-\int_{0}^{\infty}\lim_{n\to\infty}g_n^{(k)}(x)\log g_n^{(k)}(x)dx,\nonumber\\
&=&-\int_{0}^{\infty}\phi_\alpha^{(k)}(x)\log\phi_\alpha^{k}(x)dx,\nonumber\\
&=&H(\phi_{\alpha}^{(k)}).
\end{eqnarray}
\end{proof}
\begin{proof}[\textbf{Proof Theorem \ref{TPhi_order} (ii)}]
Since $F \in \mathcal{D}(\Psi_\alpha),$ from Lemma \ref{Lemma.5}, $\lim_{n \rightarrow \infty} g_n^{(k)}(x)$ $=\psi_{\alpha}^{(k)}(x)=\dfrac{\alpha}{(k-1)!} (-x)^{\alpha k-1}e^{-(-x)^{\alpha}}\;$ and for large $n$ and $x\in [L',L];$ with $L,L' < 0$
\begin{eqnarray}
&&\abs{g_n^{(k)}(x)\log g_n^{(k)}(x)-\psi_\alpha^{(k)}(x)\log \psi_\alpha^{(k)}(x)}<1,\nonumber\\
&\Leftrightarrow& -1+\psi_\alpha^{(k)}(x)\log \psi_\alpha^{(k)}(x)<g_n^{(k)}(x)\log g_n^{(k)}(x)<1+\psi_\alpha^{(k)}(x)\log \psi_\alpha^{(k)}(x).\nonumber
\end{eqnarray}
Since $\;\int_{L'}^{L}(\psi_\alpha^{(k)}(x)\log \psi_\alpha^{(k)}(x)+1)dx<\infty\;,$  
by the DCT, for $L',L<0$
\begin{eqnarray}\label{sorder_weibull_en}
\lim_{n\to\infty}H(g_n^{(k)})&=&-\lim_{L\to 0}\lim_{L'\to-\infty}\lim_{n\to\infty}\int_{L'}^{L}g_n^{(k)}(x)\log g_n^{(k)}(x)dx,\nonumber\\
&=&-\int_{-\infty}^{0}\lim_{n\to\infty}g_n^{(k)}(x)\log g_n^{(k)}(x)dx,\nonumber\\
&=&-\int_{-\infty}^{0}\psi_\alpha^{(k)}(x)\log\psi_\alpha^{k}(x)dx,\nonumber\\
&=&H(\psi_\alpha^{(k)}).
\end{eqnarray}
\end{proof}
\begin{proof}[\textbf{Proof of Theorem \ref{TPhi_order} (iii)}]
 Since $F \in \mathcal{D}(\Lambda),$ from Lemma \ref{Lemma.5}, $\lim_{n \rightarrow \infty} g_n^{(k)}(x)$ $= \lambda^{(k)}(x)=\dfrac{1}{(k-1)!} e^{-kx}e^{-e^{-x}}.\;$ and for large $n$ and $x\in [-L,L];$ with $L> 0$
\begin{eqnarray}
&&\abs{g_n^{(k)}(x)\log g_n^{(k)}(x)-\lambda^{(k)}(x)\log \lambda^{(k)}(x)}<1,\nonumber\\
&\Leftrightarrow& -1+\lambda^{(k)}(x)\log \lambda^{(k)}(x)<g_n^{(k)}(x)\log g_n^{(k)}(x)<1+\lambda^{(k)}(x)\log \lambda^{(k)}(x).\nonumber
\end{eqnarray}
Since $\;\int_{-L}^{L}(\lambda^{(k)}(x)\log \lambda^{(k)}(x)+1)dx<\infty\;,$ 
by the DCT, for $L>0, L^{\prime} < 0,$
\begin{eqnarray}\label{sorder_gumbel_en}
\lim_{n\to\infty}H(g_n^{(k)}(x))&=&-\lim_{L\to\infty}\lim_{L^{\prime}\to-\infty}\lim_{n\to\infty}\int_{L^{\prime}}^{L}g_n^{(k)}(x)\log g_n^{(k)}(x)dx,\nonumber\\
&=&-\int_{-\infty}^{\infty}\lim_{n\to\infty}g_n^{(k)}(x)\log g_n^{(k)}(x)dx,\nonumber\\
&=&-\int_{-\infty}^{\infty}\lambda^{(k)}\log \lambda^{(k)} dx,\nonumber\\
&=&H(\lambda^{(k)}).
\end{eqnarray}
\end{proof}

\appendix
\section[Appendix A]{}\label{more results}
\begin{thm}\label{Moment}
	Let $F\in\mathcal{D}(G).$
\begin{enumerate}
	\item[(i)] If $G=\Phi_\alpha,$ then $a_n=\left(1/(1-F)\right)^{\leftarrow}(n),$ $b_n=0,$ and if for some integer $0<k<\alpha,$
\[\int_{-\infty}^{0}\mid{x}\mid^kF(dx)<\infty,\]
then $\lim_{n\to\infty}E\left(\dfrac{M_n}{a_n}\right)^k=\int_{0}^{\infty}x^k\Phi_\alpha(dx)=\Gamma\left(1-\frac{k}{\alpha}\right).$
	\item[(ii)] If $G=\Psi_\alpha,$ then $a_n=r(F)-\left(1/(1-F)\right)^{\leftarrow}(n),$ $b_n=0,$ and if for some integer $k>0,$
\[\int_{-\infty}^{r(F)}\mid{x}\mid^kF(dx)<\infty,\]
then $\lim_{n\to\infty}E\left(\dfrac{M_n-r(F)}{a_n}\right)^k=\int_{-\infty}^{0}x^k\Psi_\alpha(dx)=(-1)^k\Gamma\left(1+\frac{k}{\alpha}\right).$
	\item[(iii)] If $G=\Lambda,$ then $b_n=\left(1/(1-F)\right)^{\leftarrow}(n),$ $a_n=f(b_n),$ and if for some integer $k>0,$
\[\int_{-\infty}^{r(F)}\mid{x}\mid^kF(dx)<\infty,\]
then $\lim_{n\to\infty}E\left(\dfrac{M_n-b_n}{a_n}\right)^k=\int_{-\infty}^{\infty}x^k\Lambda(dx)=(-1)^k\Gamma^{(k)}(1),$
where $\Gamma^{(k)}(1)$ is the k-th derivative of the Gamma function at $x=1.$
\end{enumerate}
\end{thm}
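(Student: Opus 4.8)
The strategy is to upgrade the weak convergence $(M_n - b_n)/a_n \Rightarrow G,$ which holds by hypothesis since $F \in \mathcal{D}(G),$ to convergence of $k$-th moments by establishing uniform integrability of $\{|(M_n-b_n)/a_n|^k : n \geq 1\}.$ Writing $W_n = (M_n - b_n)/a_n,$ I would start from the tail representation, valid for any integer $k \geq 1$ whenever $E|W_n|^k < \infty,$
\[
E\left[W_n^k\right] = \int_0^\infty k t^{k-1} P(W_n > t)\, dt + (-1)^k \int_0^\infty k t^{k-1} P(W_n < -t)\, dt,
\]
and treat the two pieces separately. In both, the integrand converges pointwise to the corresponding integrand for $G$ by (\ref{Introduction_e1}) (that is, $F^n(a_n t + b_n) \to G(t)$ and $F^n(b_n - a_n t) \to G(-t)$), so the whole task reduces to producing dominating functions that are integrable against $k t^{k-1}\, dt$ uniformly in $n.$

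For the upper-tail integral, use $1 - F^n(a_n t + b_n) \leq \min\{1,\, n\overline{F}(a_n t + b_n)\}.$ The constant bound $1$ handles $t$ in a fixed bounded interval; for large $t$ one exploits the regular, respectively rapid, variation of $\overline{F}$ near $r(F)$ that comes from $F \in \mathcal{D}(G),$ in the quantitative form of a uniform-convergence (Potter-type) bound: in the Fr\'{e}chet case $n\overline{F}(a_n t) \leq C\, t^{-\alpha + \varepsilon}$ for $t$ large, which is integrable against $t^{k-1}\, dt$ exactly because $k < \alpha$ (choose $\varepsilon < \alpha - k$); in the Gumbel case $n\overline{F}(a_n t + b_n) \leq C\, e^{-t/2}$; in the Weibull case $M_n \leq r(F) = b_n$ makes this integral vanish identically, consistent with $\Psi_\alpha(t) = 1$ for $t \geq 0.$ Dominated convergence then yields $\int_0^\infty k t^{k-1}(1 - F^n(a_n t + b_n))\, dt \to \int_{\{x > 0\}} x^k\, dG(x).$

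For the lower-tail integral I would split the $t$-range at a level $T_n$ above which $b_n - a_n t$ drops below a fixed point $c_0 < r(F).$ On $[0, T_n]$ one uses $F^n(b_n - a_n t) \leq \exp\{- n\overline{F}(b_n - a_n t)\}$ together with a lower Potter-type bound on $n\overline{F}(b_n - a_n t)$ (which tends to $t^\alpha,$ resp. $e^{t},$ according to the case) to get a fixed integrable dominating function, and then applies dominated convergence; on $(T_n, \infty),$ where $b_n - a_n t < c_0,$ monotonicity of $F$ gives $F^n(b_n - a_n t) \leq F(c_0)^{n-1} F(b_n - a_n t),$ so that part is at most $F(c_0)^{n-1} a_n^{-k} \int_{-\infty}^{r(F)} k(b_n - y)^{k-1} F(y)\, dy,$ which is finite by the hypothesis $\int |x|^k\, F(dx) < \infty$ and tends to $0$ since the exponentially small factor $F(c_0)^{n-1}$ dominates the at most polynomial growth of $a_n^{-k}$ (and of $b_n^{k-1}$ in the Gumbel case); in the Fr\'{e}chet case, where $b_n = 0$ and $a_n \to \infty,$ the cruder bound $F^n(-a_n t) \leq F(-a_n t)$ already suffices. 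Hence $(-1)^k \int_0^\infty k t^{k-1} F^n(b_n - a_n t)\, dt \to (-1)^k \int_{\{x < 0\}} |x|^k\, dG(x).$

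Adding the two limits gives $E[W_n^k] \to \int x^k\, dG(x),$ and evaluating this integral via the substitutions $x^{-\alpha} = u,$ $(-x)^\alpha = u,$ and $e^{-x} = u$ in the three cases produces $\Gamma(1 - k/\alpha),$ $(-1)^k \Gamma(1 + k/\alpha),$ and $(-1)^k \Gamma^{(k)}(1)$ respectively. The essential difficulty is the passage from the pointwise regular/rapid-variation limits (such as $n\overline{F}(a_n t) \to t^{-\alpha}$) to bounds that hold simultaneously for all large $n$ and all $t$ in an unbounded range, and the pasting of these with the moment hypothesis so as to tame the deep lower tail of $F$; this is essentially Proposition 2.1 in Resnick (1987).
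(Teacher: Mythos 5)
The paper does not prove this statement at all: Theorem~B.1 is reproduced in Appendix~A purely for reference and is credited in the text to Proposition~2.1 of Resnick (1987), so there is no internal proof to compare yours against. What you have written is, as you yourself note at the end, essentially a reconstruction of Resnick's argument: tail representation of the $k$-th moment, pointwise convergence of $1-F^n(a_nt+b_n)$ and $F^n(b_n-a_nt)$ from (\ref{Introduction_e1}), domination of the upper tail via $1-F^n\le n\overline{F}$ together with Potter (resp.\ $\Gamma$-variation) bounds, and control of the deep lower tail by splitting at the level where $b_n-a_nt$ falls below a fixed $c_0<r(F)$ and invoking the moment hypothesis on $F$ there. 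The structure is sound, the role of $k<\alpha$ in the Fr\'echet case and of $\int|x|^kF(dx)<\infty$ in the lower tail are both placed correctly, and the closed-form evaluations $\Gamma(1-k/\alpha)$, $(-1)^k\Gamma(1+k/\alpha)$, $(-1)^k\Gamma^{(k)}(1)$ follow from the substitutions you name.

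Two points remain at the level of assertion rather than proof in your sketch. First, the dominating bounds you invoke ($n\overline{F}(a_nt)\le Ct^{-\alpha+\varepsilon}$ for all large $n$ and all large $t$; $n\overline{F}(a_nt+b_n)\le Ce^{-t/2}$; the lower bounds $n\overline{F}(b_n-a_nt)\ge ct^{\alpha-\varepsilon}$, resp.\ $\ge ce^{t/2}$, on the range $[1,T_n]$ with $T_n\to\infty$) require the \emph{uniform} convergence theorems for regularly varying and $\Gamma$-varying functions, not just the pointwise limits; you correctly flag this as the essential difficulty but do not supply it. Second, in the Gumbel and Weibull cases your estimate for the range $t>T_n$ is $F(c_0)^{n-1}a_n^{-k}\cdot(\text{const}+\text{poly}(b_n))$, and the claim that $a_n^{-k}$ grows at most polynomially (so that the geometric factor $F(c_0)^{n-1}$ wins) is not automatic when $r(F)<\infty$ and needs a short separate argument from the definition $a_n=u(b_n)$. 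Neither point is a wrong turn --- both are exactly the technical content of Resnick's proof --- but as written the proposal is an accurate outline rather than a complete proof.
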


\begin{thm}\label{thm_von}
\item[(i)] Suppose that df $F$ is absolutely continuous with density $f$ which is eventually positive. \\
\item[(a)] If for some $\alpha>0$
\begin{eqnarray}\label{von_F}
\lim_{x\to\infty}\dfrac{xf(x)}{\overline{F}(x)}=\alpha
\end{eqnarray}
then $F\in\D(\Phi_\alpha).$\\
\item[(b)] If $f$ is nonincreasing and $F\in\D(\Phi_\alpha)$ then (\ref{von_F}) holds.
\item[(ii)] Suppose $F$ has right endpoint $r(F)$ finite and density $f$ positive in a left neighbourhood of $r(F).$  \\
\item[(a)] If for some $\alpha>0$
\begin{eqnarray}\label{von_W}
\lim_{x\to r(F)}\dfrac{(r(F)-x)f(x)}{\overline{F}(x)}=\alpha
\end{eqnarray}
then $F\in\D(\Psi_\alpha).$\\
\item[(b)] If $f$ is nonincreasing and $F\in\D(\Psi_\alpha)$ then (\ref{von_W}) holds.
\end{thm}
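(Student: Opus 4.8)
The plan is to reduce all four assertions to Gnedenko's characterisation of the two power-type max domains of attraction via regular variation of the tail, together with Karamata's monotone density theorem for the converse directions. Recall, from Propositions 1.11 and 1.13 in Resnick (1987), that $F\in\D(\Phi_\alpha)$ precisely when $r(F)=\infty$ and $\overline{F}$ is regularly varying at $\infty$ with index $-\alpha$, and that $F\in\D(\Psi_\alpha)$ precisely when $r(F)<\infty$ and $s\mapsto\overline{F}(r(F)-s)$ is regularly varying at $0$ with index $\alpha$.

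For (i)(a), I would write the hypothesis as $f(x)/\overline{F}(x)=(\alpha/x)\bigl(1+\varepsilon(x)\bigr)$ with $\varepsilon(x)\to0$ as $x\to\infty$, and note that on the eventual set where $f>0$ we have $f/\overline{F}=-(\log\overline{F})'$. Integrating from a fixed large $x_0$ gives
\[
-\log\overline{F}(x)=c+\alpha\log x+\alpha\int_{x_0}^{x}\frac{\varepsilon(t)}{t}\,dt,\qquad x\ge x_0,
\]
so $\overline{F}(x)=x^{-\alpha}L(x)$ with $L(x)=e^{-c}\exp\bigl\{-\alpha\int_{x_0}^{x}\varepsilon(t)t^{-1}\,dt\bigr\}$. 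For each $\lambda>0$, $L(\lambda x)/L(x)=\exp\bigl\{-\alpha\int_{x}^{\lambda x}\varepsilon(t)t^{-1}\,dt\bigr\}\to1$ because $\bigl|\int_{x}^{\lambda x}\varepsilon(t)t^{-1}\,dt\bigr|\le(\log\lambda)\sup_{t\ge x}|\varepsilon(t)|\to0$; hence $L$ is slowly varying, $\overline{F}$ is regularly varying of index $-\alpha$, and $F\in\D(\Phi_\alpha)$. Part (ii)(a) is the same computation performed at the finite right endpoint: with $g(s)=\overline{F}(r(F)-s)$ one has $g'(s)=f(r(F)-s)$ and the hypothesis becomes $sg'(s)/g(s)\to\alpha$ as $s\downarrow0$; integrating $(\log g)'$ over $[s,s_0]$ produces $g(s)=s^{\alpha}\ell(s)$ with $\ell$ slowly varying at $0$, which is precisely Gnedenko's criterion for $F\in\D(\Psi_\alpha)$.

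For the converses (i)(b) and (ii)(b) I would invoke the monotone density theorem for regular variation (see, e.g., Resnick, 1987, or Bingham, Goldie and Teugels, 1987, Theorem 1.7.2). In (i)(b), $F\in\D(\Phi_\alpha)$ gives $\overline{F}(x)=x^{-\alpha}L(x)$ with $L$ slowly varying and $\overline{F}(x)=\int_x^{\infty}f(t)\,dt$ with $f$ eventually nonincreasing, so the theorem yields $f(x)\sim\alpha x^{-\alpha-1}L(x)$, i.e.\ $xf(x)/\overline{F}(x)\to\alpha$. In (ii)(b), $F\in\D(\Psi_\alpha)$ gives $g(s)=\overline{F}(r(F)-s)=s^{\alpha}\ell(s)$ with $\ell$ slowly varying at $0$ and $g(0)=0$, so $g(s)=\int_0^s g'(u)\,du$ with $g'(u)=f(r(F)-u)$ nondecreasing for small $u$; the monotone density theorem at the origin then gives $g'(s)\sim\alpha s^{\alpha-1}\ell(s)$, that is $(r(F)-x)f(x)/\overline{F}(x)\to\alpha$.

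The integrations themselves and the uniform smallness of the logarithmic-window integral $\int_x^{\lambda x}\varepsilon(t)t^{-1}\,dt$ are routine; the one step that needs care is the application of Karamata's monotone density theorem after the reflection $x\mapsto r(F)-x$, which converts the tail integral $\int_x^{\infty}f$ into an integral up from $0$ of the reflected density and slow variation at $\infty$ into slow variation at $0$. Once this dictionary between the two endpoints is in place, the converse direction follows at once.
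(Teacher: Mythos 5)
The paper does not prove this statement: Theorem \ref{thm_von} sits in Appendix \ref{more results} among the ``results used in this article'' and is explicitly attributed to Resnick (1987) (Proposition 1.15 and the surrounding von Mises theory), so there is no internal proof to compare yours against. Your argument is correct and is essentially the canonical textbook proof. The forward parts (i)(a) and (ii)(a) follow, as you say, by integrating $f/\overline{F}=-(\log\overline{F})'$ to obtain the Karamata representation $\overline{F}(x)=x^{-\alpha}L(x)$ (respectively $\overline{F}(r(F)-s)=s^{\alpha}\ell(s)$), after which Gnedenko's regular-variation characterisations of $\D(\Phi_\alpha)$ and $\D(\Psi_\alpha)$ (Resnick, Propositions 1.11 and 1.13, which the paper also quotes) finish the job; the converses (i)(b) and (ii)(b) are exactly Karamata's monotone density theorem applied to the tail integral $\overline{F}(x)=\int_x^{\infty}f$, respectively to $\int_0^s f(r(F)-u)\,du$ after reflection. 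Two small points you would want to make explicit in a full write-up: the hypothesis in (i)(a) implicitly forces $r(F)=\infty$ so that $(\log\overline{F})'$ can be integrated on a half-line, and the monotone density theorem at a finite endpoint must be stated carefully (it is obtained from the version at infinity by the substitution $s\mapsto 1/s$), which you correctly flag as the delicate step. I see no gaps.
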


\begin{lem}\label{lem1_appendix}
Let $u(x)$ be absolutely continuous auxiliary function with $u(x)\to 0$ as $x\uparrow r(F).$
	
	\item[(a)] If $r(F)=\infty$ then $\lim_{t\to\infty}t^{-1}u(t)=0.$
	
	\item[(b)] If $r(F)<\infty$ then $u(r(F))=\lim_{t\uparrow r(F)}u(t)=0$ and $\lim_{n\to r(F)}(r(F)-t)^{-1}u(t)=0.$
	
	In either case $\lim_{t\uparrow r(F)}(t+xu(t))=r(F)$ for all $x\in\Real.$
\end{lem}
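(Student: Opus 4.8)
The plan is to reduce the entire lemma to the single fact that the (absolutely continuous) auxiliary function satisfies $u'(t)\to 0$ as $t\uparrow r(F)$, and then read off each assertion by an elementary integration argument. To establish $u'(t)\to 0$ I would use the explicit form $u(t)=\int_t^{r(F)}\overline{F}(s)\,ds/\overline{F}(t)$: since $F$ is absolutely continuous, differentiation gives $u'(t)=-1+f(t)\int_t^{r(F)}\overline{F}(s)\,ds/\overline{F}(t)^2$, and the second summand tends to $1$ by the Gumbel-type von Mises relation (\ref{vonMises_g}), which holds here because $f$ is eventually nonincreasing; equivalently one may simply quote that an absolutely continuous auxiliary function of a $\Gamma$-varying tail has vanishing derivative (Resnick, 1987).

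For part (a), with $r(F)=\infty$, I would fix $\varepsilon>0$, pick $T$ with $|u'(s)|<\varepsilon$ for $s\ge T$, and write $u(t)/t=\bigl(u(T)+\int_T^t u'(s)\,ds\bigr)/t\le |u(T)|/t+\varepsilon$, so that $\limsup_{t\to\infty}u(t)/t\le\varepsilon$; letting $\varepsilon\downarrow 0$ finishes it (if the hypothesis $u(x)\to 0$ is taken literally this step is of course immediate, the denominator diverging). For part (b), with $r(F)<\infty$, monotonicity of $\overline{F}$ gives $\int_t^{r(F)}\overline{F}(s)\,ds\le(r(F)-t)\overline{F}(t)$, hence $0\le u(t)\le r(F)-t\to 0$, which is the first claim and in particular identifies $u(r(F))=0$; for the sharper $u(t)/(r(F)-t)\to 0$ I would use $u'\to 0$ once more: since $u(r(F))=0$, for $t$ near $r(F)$ one has $u(t)=-\int_t^{r(F)}u'(s)\,ds$, whence $u(t)/(r(F)-t)\le\sup_{(t,r(F))}|u'|\to 0$.

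The concluding statement is then pure algebra: if $r(F)=\infty$ then $t+xu(t)=t\bigl(1+x\,u(t)/t\bigr)\to\infty$ by (a), and if $r(F)<\infty$ then $r(F)-(t+xu(t))=(r(F)-t)\bigl(1-x\,u(t)/(r(F)-t)\bigr)\to 0$ by (b), so in both cases $t+xu(t)\to r(F)$ for every $x\in\Real$. The only genuine obstacle is the input $u'(t)\to 0$, where some care is needed: auxiliary functions are unique only up to asymptotic equivalence and differentiation does not respect equivalence, so I would pin down $u$ as the canonical choice $\int_t^{r(F)}\overline{F}(s)\,ds/\overline{F}(t)$ and differentiate as above, or else invoke the representation theorem for $\Gamma$-variation to replace $u$ by an asymptotically equivalent absolutely continuous function whose derivative tends to zero.
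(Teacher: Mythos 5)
Your proof is correct; note that the paper itself supplies no proof of this lemma --- it is one of the results quoted for reference from Resnick (1987) in Appendix \ref{more results} --- so there is no in-paper argument to compare against, and your write-up is the standard argument. The one genuinely important observation you make, and make correctly, is that the hypothesis as printed, ``$u(x)\to 0$,'' must be read as ``$u'(x)\to 0$'': under the literal reading part (a) and the first claim of (b) are vacuous, while the second claim of (b), $u(t)/(r(F)-t)\to 0$, would not follow at all (your own bound $0\le u(t)\le r(F)-t$, obtained from monotonicity of $\overline{F}$, only gives a ratio bounded by $1$). Your verification that $u'(t)=-1+f(t)\int_t^{r(F)}\overline{F}(s)\,ds/\overline{F}(t)^2\to 0$ via the von Mises relation (\ref{vonMises_g}) legitimately supplies this input for the canonical auxiliary function, which is exactly the choice the paper uses in the proof of Theorem \ref{TPhi}(c), and your caveat that asymptotic equivalence of auxiliary functions does not survive differentiation is well taken --- pinning down the canonical representative (or passing to an equivalent smooth one via the representation theorem) is the right way to close that gap. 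The remaining steps --- $u(t)/t\le |u(T)|/t+\varepsilon$ for $t\ge T$, the identity $u(t)=-\int_t^{r(F)}u'(s)\,ds$ once $u(r(F))=0$ is known, the resulting bound $u(t)/(r(F)-t)\le\sup_{(t,r(F))}\abs{u'}$, and the concluding algebra giving $t+xu(t)\to r(F)$ in both cases --- are all sound.
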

\begin{thm}\label{lem2_appendix}
	If $u$ satisfies the conditions given in Lemma (\ref{lem1_appendix}), then 
\[\lim_{t\to r(F)}\dfrac{u(t+xu(t))}{u(t)}=1 \;\; \text{locally uniformly in}\; x\in\Real.\] 
\end{thm}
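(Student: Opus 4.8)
The plan is to reduce the claim to the defining property of $\Gamma$-variation, (\ref{RegVar_g}), together with the explicit form of the auxiliary function $u(t)=V(t)/\overline{F}(t)$, where $V(t)=\int_t^{r(F)}\overline{F}(s)\,ds$; in particular one has the identity $u(t)\overline{F}(t)=V(t)$ and, since $\overline{F}$ is continuous, $V'=-\overline{F}$. Fix $M>0$. By Lemma \ref{lem1_appendix}, $t+xu(t)\to r(F)$ as $t\uparrow r(F)$ for each $x$; a short estimate ($u(t)\le r(F)-t$ when $r(F)<\infty$, used with Lemma \ref{lem1_appendix}(b); $u(t)/t\to 0$ when $r(F)=\infty$) shows this holds uniformly for $|x|\le M$ and that $\overline{F}(t+xu(t))>0$ once $t$ is close enough to $r(F)$, so every expression below is well defined.

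The key step is the decomposition
\[
\frac{u(t+xu(t))}{u(t)}=\frac{V(t+xu(t))}{V(t)}\cdot\frac{\overline{F}(t)}{\overline{F}(t+xu(t))}.
\]
The second factor tends to $e^{x}$ by (\ref{RegVar_g}), and as the reciprocal of $\overline{F}(t+xu(t))/\overline{F}(t)$ it is nondecreasing in $x$ with continuous limit, so the convergence is uniform on $[-M,M]$. For the first factor, the substitution $s=t+yu(t)$ and the identity $u(t)\overline{F}(t)=V(t)$ give
\[
V(t)-V(t+xu(t))=u(t)\int_0^{x}\overline{F}(t+yu(t))\,dy=V(t)\int_0^{x}\frac{\overline{F}(t+yu(t))}{\overline{F}(t)}\,dy,
\]
whence $\dfrac{V(t+xu(t))}{V(t)}=1-\int_0^{x}\frac{\overline{F}(t+yu(t))}{\overline{F}(t)}\,dy$ (for $x<0$ interpret $\int_0^{x}$ as $-\int_x^{0}$). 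By (\ref{RegVar_g}) the integrand tends to $e^{-y}$; since $y\mapsto\overline{F}(t+yu(t))/\overline{F}(t)$ is nonincreasing with continuous limit, the convergence is uniform on compact $y$-intervals, so the integral tends to $\int_0^{x}e^{-y}\,dy=1-e^{-x}$, uniformly for $|x|\le M$. Hence the first factor tends to $e^{-x}$ locally uniformly, and the product of the two limits is $e^{-x}e^{x}=1$.

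I expect the only delicate point to be upgrading pointwise convergence to \emph{locally uniform} convergence, and this is resolved purely by monotonicity: each of $\overline{F}(t+\cdot\,u(t))/\overline{F}(t)$ and its reciprocal is a monotone function of its argument converging pointwise to a continuous limit, hence converges uniformly on every compact interval (a pointwise limit of monotone functions that is continuous is attained uniformly on compacts), and this uniformity is inherited by the integral $\int_0^{x}$. Everything else is routine: the well-definedness of $t+xu(t)$ is precisely what Lemma \ref{lem1_appendix} provides, and the change of variables and the identity $u\overline{F}=V$ are immediate. (Alternatively, when $u$ is differentiable near $r(F)$ with $u'(t)\to 0$ — which holds for the canonical auxiliary function because $u'=-1+fV/\overline{F}^{2}\to 0$ by the von Mises condition (\ref{vonMises_g}) — one may argue directly from $u(t+xu(t))-u(t)=\int_t^{t+xu(t)}u'(s)\,ds$, bounding the integrand by a prescribed $\varepsilon$ for $t$ large; or one may cite the uniform convergence theory for $\Gamma$-varying functions in Resnick (1987).)
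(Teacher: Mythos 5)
Your argument is correct, but it takes a genuinely different route from the one the paper relies on: the paper states Theorem~\ref{lem2_appendix} without proof, quoting it from Resnick (1987), and the hypothesis ``the conditions given in Lemma~\ref{lem1_appendix}'' is intended to mean that $u$ is absolutely continuous with $u'(t)\to 0$ as $t\uparrow r(F)$ (the displayed ``$u(x)\to 0$'' there is evidently a typo), from which Resnick's proof is exactly your closing parenthetical: $\abs{u(t+xu(t))-u(t)}=\abs{\int_t^{t+xu(t)}u'(s)\,ds}\le \varepsilon M u(t)$ once $t$ is close enough to $r(F)$ that the whole interval of integration lies where $\abs{u'}<\varepsilon$ --- which parts (a) and (b) of Lemma~\ref{lem1_appendix} guarantee uniformly for $\abs{x}\le M$. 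That argument is two lines, needs no $\Gamma$-variation, and applies to any auxiliary function satisfying the stated conditions. Your main argument instead specializes to the canonical choice $u=V/\overline{F}$ with $V(t)=\int_t^{r(F)}\overline{F}(s)\,ds$ and invokes (\ref{RegVar_g}); granting those, the decomposition of $u(t+xu(t))/u(t)$ into $\bigl(V(t+xu(t))/V(t)\bigr)\cdot\bigl(\overline{F}(t)/\overline{F}(t+xu(t))\bigr)$, the identity $V(t+xu(t))/V(t)=1-\int_0^x\overline{F}(t+yu(t))/\overline{F}(t)\,dy$, and the P\'olya-type upgrade from pointwise to locally uniform convergence via monotonicity of $y\mapsto\overline{F}(t+yu(t))/\overline{F}(t)$ are all sound, and there is no circularity since (\ref{RegVar_g}) is established in the paper independently of this theorem. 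What your route buys is a self-contained derivation in which the local uniformity is transparent and the limit $e^{-x}\cdot e^{x}=1$ falls out cleanly; what it costs is generality: it uses hypotheses not contained in the theorem's stated assumptions (that $1/\overline{F}$ is $\Gamma$-varying with auxiliary function this particular $u$, and that $\int^{r(F)}\overline{F}<\infty$ so $V$ is finite), and it proves the claim only for that one representative of the asymptotic-equivalence class of auxiliary functions. Both extra assumptions do hold in the only place the paper uses the result, namely $F\in\mathcal{D}(\Lambda)$ in the proof of Theorem~\ref{TPhi}(c), so your proof suffices for the paper's purposes; if you want the theorem exactly as stated, promote your parenthetical remark to the main argument.
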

\begin{thm} Let  $F$ be absolutely continuous in a left neighbourhood of $r(F)$ with density $f.$ If 
\begin{eqnarray} \label{lem3_appendix} \lim_{x\uparrow r(F)}f(x)\int_{x}^{r(F)}\overline{F}(t)dt/\overline{F}(x)^2=1,\end{eqnarray} 
	 then $F\in\D(\Lambda).$ In this case we may take,
\[u(t)=\int_{x}^{r(F)}\overline{F}(s)ds/\overline{F}(t), \;\;b_n=F^{-}(1-1/n),\;\;a_n=u(b_n).\]
\end{thm}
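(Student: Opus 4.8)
The plan is to show that the von Mises condition (\ref{lem3_appendix}) forces the tail $\overline{F}$ to be $\Gamma$-varying at $r(F)$ with the stated auxiliary function $u$, and then to read off $F\in\D(\Lambda)$ directly from the equivalence (\ref{Introduction_e1.2}). So first I would set up $u$ and extract two consequences of the hypothesis. Write $w(t)=\int_t^{r(F)}\overline{F}(s)\,ds$ and let $h(t)=f(t)/\overline{F}(t)$ be the hazard rate, so that $u(t)=w(t)/\overline{F}(t)$ and (\ref{lem3_appendix}) reads $f(t)w(t)/\overline{F}(t)^2=h(t)u(t)\to1$ as $t\uparrow r(F)$; in particular $w$ is finite and $u$ is a well-defined, positive, absolutely continuous function in a left neighbourhood of $r(F)$. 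Differentiating and using $w'=-\overline{F}$ and $\overline{F}'=-f$ (a.e., by absolute continuity) gives $u'(t)=-1+f(t)w(t)/\overline{F}(t)^2\to0$. Hence $u$ qualifies for Lemma \ref{lem1_appendix} and Theorem \ref{lem2_appendix}: in particular $t+xu(t)\to r(F)$ for every $x\in\Real$, and $u(t+xu(t))/u(t)\to1$ locally uniformly in $x$.

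The core step is the estimate, valid for $t$ close enough to $r(F)$ that $t+xu(t)<r(F)$,
\[ \log\frac{\overline{F}(t)}{\overline{F}\big(t+xu(t)\big)}=\int_t^{\,t+xu(t)}h(s)\,ds=\int_0^x h\big(t+\tau u(t)\big)\,u\big(t+\tau u(t)\big)\,\frac{u(t)}{u\big(t+\tau u(t)\big)}\,d\tau, \]
where the first equality is $\frac{d}{ds}\big(-\log\overline{F}(s)\big)=h(s)$ integrated, and the second is the substitution $s=t+\tau u(t)$. Along the range of integration $s\to r(F)$, so $h(s)u(s)\to1$, while $u(t)/u(s)=u(t)/u(t+\tau u(t))\to1$ uniformly for $\tau$ between $0$ and $x$ by Theorem \ref{lem2_appendix}; thus the integrand tends to $1$ uniformly, the integral to $x$, and therefore $\overline{F}(t+xu(t))/\overline{F}(t)\to e^{-x}$ locally uniformly in $x$. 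This is exactly $\Gamma$-variation of $\overline{F}$ at $r(F)$ with auxiliary function $u$.

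Finally I would convert this to the stated form. Put $b_n=F^{\leftarrow}(1-1/n)$, so $n\overline{F}(b_n)\to1$ and $b_n\uparrow r(F)$, and $a_n=u(b_n)$. Then for each $x\in\Real$,
\[ n\,\overline{F}(a_nx+b_n)=\big(n\,\overline{F}(b_n)\big)\cdot\frac{\overline{F}\big(b_n+xu(b_n)\big)}{\overline{F}(b_n)}\longrightarrow 1\cdot e^{-x}=-\log\Lambda(x), \]
so by (\ref{Introduction_e1.2}) we obtain $\lim_{n\to\infty}F^n(a_nx+b_n)=\Lambda(x)$, i.e. $F\in\D(\Lambda)$ with these constants.

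I expect the main obstacle to be the uniform passage to the limit inside the core integral: one must ensure that, as $t\uparrow r(F)$, both $h(s)u(s)\to1$ and $u(t)/u(s)\to1$ hold uniformly while $s$ ranges over $[t,t+xu(t)]$, an interval whose length need not shrink when $r(F)=\infty$, and then interchange the limit with the integral. This is exactly where Theorem \ref{lem2_appendix} (and, when $r(F)=\infty$, the bound $u(t)=o(t)$ that follows from $u'\to0$, as in Lemma \ref{lem1_appendix}) does the work; the remaining steps are routine.
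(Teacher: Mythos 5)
Your proof is correct and is essentially the canonical argument: the paper states this theorem in Appendix~A without proof, quoting it from Resnick (1987, Proposition 1.17), and your derivation --- writing $-\log\bigl(\overline{F}(t+xu(t))/\overline{F}(t)\bigr)=\int_0^x h\bigl(t+\tau u(t)\bigr)u(t)\,d\tau$ with $h=f/\overline{F}$, then using $h(s)u(s)\to 1$ together with Theorem~\ref{lem2_appendix} to pass to the limit under the integral and obtain $\Gamma$-variation, and finally converting via $n\overline{F}(b_n)\to 1$ and (\ref{Introduction_e1.2}) --- is exactly the proof given there. The one point worth flagging is that Lemma~\ref{lem1_appendix} as printed hypothesizes $u(x)\to 0$, which should read $u'(x)\to 0$; you correctly use the latter form, which is what your computation $u'=-1+fw/\overline{F}^2\to 0$ supplies.
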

\begin{thm}\label{lem4_appendix}
Suppose that $U_1,\;U_2$ are nondecreasing and $\rho$- varying, $0<\rho<\infty.$ Then for $0\leq c\leq \infty,$   $U_1(x)\sim cU_2(x)$ iff $U_1^{\leftarrow}(x)\sim c^{-\rho^{-1}}U_2^{\leftarrow},$ as $n\to\infty.$
\end{thm}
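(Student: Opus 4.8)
\emph{Proof plan.} The plan is to reduce the stated equivalence to a single implication by a duality argument, and to prove that implication by sandwiching the generalized inverses. Write $U^{\leftarrow}(y)=\inf\{s:U(s)\ge y\}$ for nondecreasing $U$ with $U(\infty)=\infty$; after replacing $U$ by its right-continuous version (which alters neither $U^{\leftarrow}$ nor the regular variation of $U$), one has the duality $U^{\leftarrow}(y)\le s\iff U(s)\ge y$, so that $\inf\{s:\lambda U(s)\ge y\}=U^{\leftarrow}(y/\lambda)$ for every $\lambda>0$ and $U^{\leftarrow}(y)\to\infty$. First I would record two standard facts from regular variation theory: if $U$ is $\rho$-varying with $\rho>0$, then $U(U^{\leftarrow}(y))\sim y$, $U^{\leftarrow}$ is $(1/\rho)$-varying, and, iterating, $(U^{\leftarrow})^{\leftarrow}(x)\sim U(x)$ as $x\to\infty$. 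Granting these, it suffices to prove the implication $U_1(x)\sim cU_2(x)\Rightarrow U_1^{\leftarrow}(x)\sim c^{-1/\rho}U_2^{\leftarrow}(x)$: applying it to the $(1/\rho)$-varying functions $U_1^{\leftarrow},U_2^{\leftarrow}$, with $c$ replaced by $c^{-1/\rho}$, gives $(U_1^{\leftarrow})^{\leftarrow}(x)\sim(c^{-1/\rho})^{-\rho}(U_2^{\leftarrow})^{\leftarrow}(x)=c\,(U_2^{\leftarrow})^{\leftarrow}(x)$, that is $U_1(x)\sim cU_2(x)$, which is the converse direction.

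To prove the implication, take first $0<c<\infty$ and fix $\epsilon\in(0,c)$. For all large $x$ we have $(c-\epsilon)U_2(x)\le U_1(x)\le(c+\epsilon)U_2(x)$; applying generalized inverses and using that $U_i^{\leftarrow}(y)\to\infty$ (so only large arguments of $U_i$ are relevant) one obtains, for all large $y$,
\[
U_2^{\leftarrow}\left(\frac{y}{c+\epsilon}\right)\le U_1^{\leftarrow}(y)\le U_2^{\leftarrow}\left(\frac{y}{c-\epsilon}\right).
\]
Dividing by $U_2^{\leftarrow}(y)$ and using that $U_2^{\leftarrow}$ is $(1/\rho)$-varying, so $U_2^{\leftarrow}(y/\lambda)/U_2^{\leftarrow}(y)\to\lambda^{-1/\rho}$, gives $(c+\epsilon)^{-1/\rho}\le\liminf_{y\to\infty}U_1^{\leftarrow}(y)/U_2^{\leftarrow}(y)$ and $\limsup_{y\to\infty}U_1^{\leftarrow}(y)/U_2^{\leftarrow}(y)\le(c-\epsilon)^{-1/\rho}$; letting $\epsilon\downarrow0$ yields $U_1^{\leftarrow}(y)\sim c^{-1/\rho}U_2^{\leftarrow}(y)$. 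The boundary cases use one-sided bounds: if $c=0$, then for every $\delta>0$ eventually $U_1(x)\le\delta U_2(x)$, so $U_1^{\leftarrow}(y)\ge U_2^{\leftarrow}(y/\delta)\sim\delta^{-1/\rho}U_2^{\leftarrow}(y)$, whence $\liminf_{y\to\infty}U_1^{\leftarrow}(y)/U_2^{\leftarrow}(y)\ge\delta^{-1/\rho}\to\infty$ as $\delta\downarrow0$, matching $0^{-1/\rho}=\infty$; the case $c=\infty$ is symmetric and gives the limit $0=\infty^{-1/\rho}$.

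The sandwich estimate is routine; the step I expect to require genuine care is the auxiliary claim that a $\rho$-varying $U$ (with $\rho>0$) has a $(1/\rho)$-varying inverse $U^{\leftarrow}$, together with the asymptotic inversion relations $U(U^{\leftarrow}(y))\sim y$ and $(U^{\leftarrow})^{\leftarrow}(x)\sim U(x)$; these are standard in regular variation theory (cf. Resnick, 1987). A self-contained proof would start from the bracketing $U(U^{\leftarrow}(y)-)\le y\le U(U^{\leftarrow}(y))$ and the uniform convergence theorem for regularly varying functions, the delicate point being the control of the jumps of $U$. That last difficulty can be sidestepped by first passing to a continuous, eventually strictly increasing representative in the same regular variation class, for instance $x\mapsto\rho\int_{1}^{x}U(t)t^{-1}\,dt$, which is asymptotic to $U$ by Karamata's theorem and admits a genuine two-sided inverse near infinity; once these preliminaries are in place, the theorem follows as above.
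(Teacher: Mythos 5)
Your proof is correct: the sandwich estimate obtained from $(\lambda U)^{\leftarrow}(y)=U^{\leftarrow}(y/\lambda)$ combined with the $(1/\rho)$-variation of $U_2^{\leftarrow}$, the one-sided bounds for $c=0,\infty$, and the duality $(U^{\leftarrow})^{\leftarrow}\sim U$ (with the jump issue handled via the continuous Karamata representative) is precisely the standard argument. The paper itself gives no proof of this statement---it is listed in the appendix of ``results used'' and is Proposition 0.8(vi) of Resnick (1987)---so there is nothing to compare against; the only blemish is in the statement itself, where ``$n\to\infty$'' should read ``$x\to\infty$''.
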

\begin{defn}\label{def1_appendix}
A nondecreasing function $U=1/\overline{F}$ is $\Gamma$ varying if $U$ is defined on $(l(F),r(F)),$ with  $\lim_{x\to r(F)}U(x)=\infty$ and there exists a positive function $f$ defined on $(l(F),r(F))$ such that for all $x$
\begin{eqnarray}\label{e1_appendix}
\lim_{t\to r(F)}\dfrac{U(t+xf(t))}{U(t)}=e^x.
\end{eqnarray}
The function $f$ is called an auxiliary function and is unique up to asymptotic equivalence. If (\ref{e1_appendix}) is satisfied by both $f_1$ and $f_2$ then $\;F_t(f_i(t)x)\to 1-e^{-x}\;$ as $\;t \to \infty,\;i = 1,2,$ where $\;F_t(x)=1-U(t)/U(t+x)\;$ is a family of distributions and we have $f_1(t)\sim f_2(t).$
\end{defn}
\begin{thm}\label{gn_conv}
Suppose that $F$ is absolutely continuous with pdf $f.$ If $F\in\D(G)$ and
\begin{itemize}
\item[(i)] $G=\Phi_\alpha,$ then $g_n(x)\to \phi_\alpha(x)$ locally uniformly on $(0,\infty)$ iff (\ref{von_F}) holds;
\item[(ii)] $G=\Psi_\alpha,$ then $g_n(x)\to \psi_\alpha(x)$ locally uniformly on $(-\infty,0)$ iff (\ref{von_W}) holds;
\item[(iii)] $G=\Lambda,$ then $g_n(x)\to \lambda(x)$ locally uniformly on $\Real$ iff (\ref{lem3_appendix}) holds.
\end{itemize}
\end{thm}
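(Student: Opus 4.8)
The plan is to handle all three cases through one factorisation of $g_n$, establishing each ``iff'' by separating the easy forward implication (von Mises condition $\Rightarrow$ local uniform density convergence) from the more delicate converse. I would carry out the Fr\'{e}chet case (i) in full detail and then indicate how (ii) and (iii) follow after the substitutions that put $r(F)-x$ in the role of $x$ (Weibull) and $t\mapsto t+xu(t)$ (Gumbel), the latter relying on the auxiliary function $u$ and the local uniform limit of Theorem \ref{lem2_appendix}. Note that the standing hypothesis $F\in\D(G)$ is available in \emph{both} directions, so I may freely use those of its consequences that do not mention the density $f$.

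For (i), with $b_n=0$ and $a_n=F^{-1}(1-\tfrac1n)$, I would start from
\[
g_n(x)=na_nf(a_nx)F^{n-1}(a_nx)=\frac1x\,\bigl(n\overline{F}(a_nx)\bigr)\,\frac{a_nxf(a_nx)}{\overline{F}(a_nx)}\,F^{n-1}(a_nx),\qquad x>0,
\]
whose middle factor is exactly the von Mises ratio of (\ref{von_F}) evaluated at $a_nx$. From $F\in\D(\Phi_\alpha)$ and (\ref{Introduction_e1.2}) one has $n\overline{F}(a_nx)\to x^{-\alpha}$ and $F^{n-1}(a_nx)\to\Phi_\alpha(x)=e^{-x^{-\alpha}}$, both locally uniformly on $(0,\infty)$ (the first by the uniform convergence theorem for regularly varying functions, the second by P\'{o}lya's theorem since $\Phi_\alpha$ is continuous and $F(a_nx)\to1$). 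Assuming (\ref{von_F}) for the forward implication, the fact that $a_nx\to\infty$ uniformly for $x$ in a compact subset of $(0,\infty)$ makes the middle factor tend to $\alpha$ uniformly there, and multiplying the three factors gives $g_n(x)\to\alpha x^{-\alpha-1}e^{-x^{-\alpha}}=\phi_\alpha(x)$ locally uniformly.

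For the converse I would assume $g_n\to\phi_\alpha$ locally uniformly and divide by $F^{n-1}(a_nx)\to\Phi_\alpha(x)$, a strictly positive limit on $(0,\infty)$, to get $na_nf(a_nx)\to\phi_\alpha(x)/\Phi_\alpha(x)=\alpha x^{-\alpha-1}$ locally uniformly, whence
\[
\frac{a_nxf(a_nx)}{\overline{F}(a_nx)}=\frac{x\,na_nf(a_nx)}{n\overline{F}(a_nx)}\longrightarrow\frac{\alpha x^{-\alpha}}{x^{-\alpha}}=\alpha
\]
locally uniformly on $(0,\infty)$. The main obstacle is the final step: promoting this convergence along the two-parameter family $\{a_nx:n\ge1,\ x\in K\}$ to the genuine limit $tf(t)/\overline{F}(t)\to\alpha$ as $t\to\infty$ through all reals, which is what (\ref{von_F}) asserts. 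For this I would use that $a_n$ is regularly varying of index $1/\alpha$, so $a_{n+1}/a_n\to1$; then every large $t$ can be written $t=a_nx$ with $x$ trapped in an interval $[x_0,(a_{n+1}/a_n)x_0]$ collapsing to $\{x_0\}$, and the local uniform convergence just obtained pins the ratio at $t$ down to $\alpha$ in the limit, irrespective of the chosen representation.

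Cases (ii) and (iii) follow the same template. For Weibull I would use that $F^{*}(t)=\overline{F}(r(F)-1/t)$ is regularly varying of index $-\alpha$ and work with the grid $\{r(F)+a_nx:n\ge1,\ x\in K\}$, $K\subset(-\infty,0)$, approaching $r(F)$; the interpolation again rests on $a_{n+1}/a_n\to1$. For Gumbel the prefactor $1/x$ is absent: one combines $n\overline{F}(a_nx+b_n)\to e^{-x}$ (from $\Gamma$-variation) with $u(a_nx+b_n)/u(b_n)\to1$ locally uniformly (Theorem \ref{lem2_appendix}) to obtain $g_n\to\lambda$, and in the converse the same two facts convert $na_nf(a_nx+b_n)\to e^{-x}$ into $u(t)f(t)/\overline{F}(t)\to1$, i.e.\ (\ref{lem3_appendix}); here the grid $\{b_n+xu(b_n)\}$ is dense near $r(F)$ precisely because $u(a_nx+b_n)/u(b_n)\to1$.
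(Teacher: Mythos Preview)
The paper does not prove this theorem: it is quoted in Appendix~\ref{more results} from Resnick (1987), Proposition~2.5, as a background result used in the proofs of Theorem~\ref{TPhi} and Lemma~\ref{Lemma.5}, with no argument supplied. There is therefore no ``paper's own proof'' to compare against.

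That said, your outline is essentially the standard one (and is in fact close to Resnick's argument). The factorisation
\[
g_n(x)=\frac{1}{x}\cdot n\overline{F}(a_nx)\cdot\frac{a_nxf(a_nx)}{\overline{F}(a_nx)}\cdot F^{n-1}(a_nx)
\]
cleanly isolates the von Mises ratio, and the forward implications in all three cases go through exactly as you describe, using the uniform convergence theorem for regularly varying functions together with $F^{n-1}\to G$. For the converse, your reduction to showing that the lattice $\{a_nx:n\ge1,\ x\in K\}$ is asymptotically dense near $r(F)$ is the right idea; the justification via $a_{n+1}/a_n\to1$ (regular variation of $a_n$) is correct in cases (i) and (ii). In the Gumbel case the analogous density statement is that $(b_{n+1}-b_n)/u(b_n)\to0$, which follows from $n\overline{F}(b_{n+1})=n/(n+1)\to1=e^{0}$ and the local uniform $\Gamma$-variation relation; you allude to this but it deserves an explicit line, since the mechanism is slightly different from the multiplicative $a_{n+1}/a_n\to1$ used in the other two cases.
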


\begin{figure}[!ht]
\section{Graphs of densities and relative entropies}  \label{Graphs}
  \centering
   \subfloat[]{\label{pareto_graph_f}\includegraphics[width=0.5\textwidth]{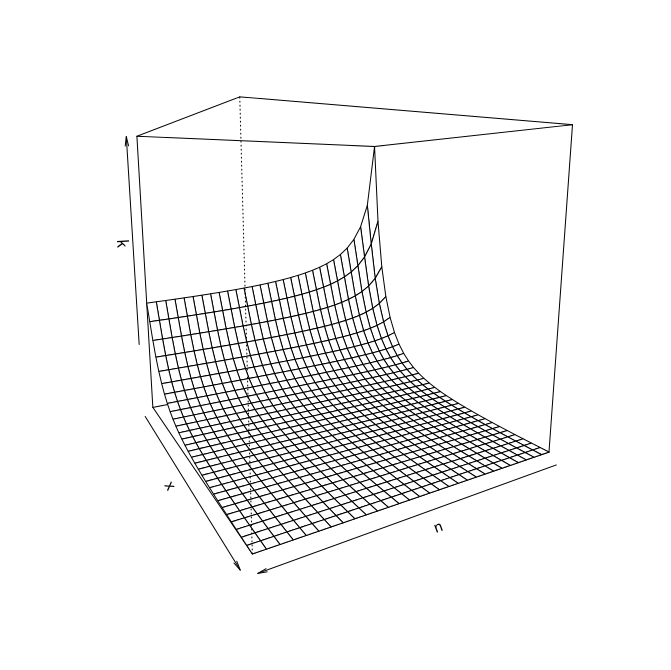}} 
  \subfloat[]{\label{pareto_graph_ent}\includegraphics[width=0.5\textwidth]{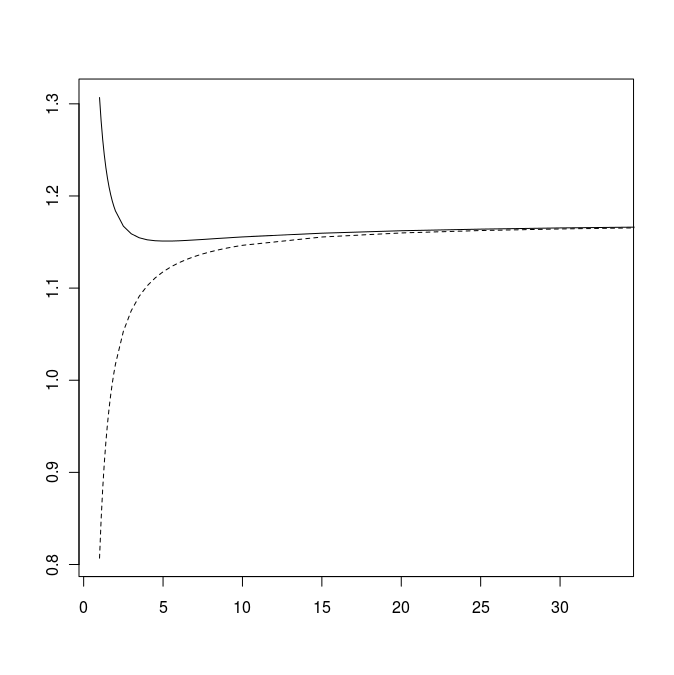}}                
  \caption{\small{(A) Graph of $g_n(x)$ in the Pareto case with $\alpha=2,$ $1 < n < 5$ and $x\in (1,5)$). (B) Entropy $H(g_n)$ (dashed line) and  $\Delta_g(g_n)$ (line) for  $1 < n < 100.$}}

\centering
  \subfloat[]{\includegraphics[width=0.5\textwidth]{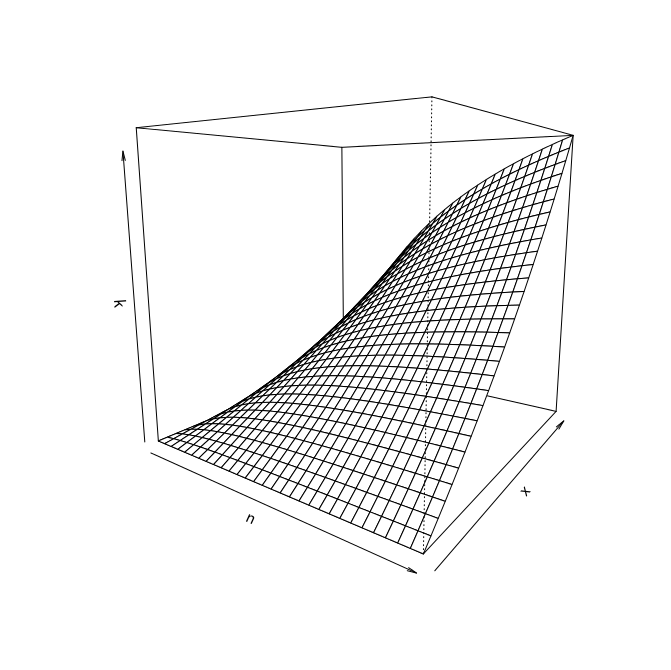}}
  \subfloat[]{\includegraphics[width=0.5\textwidth]{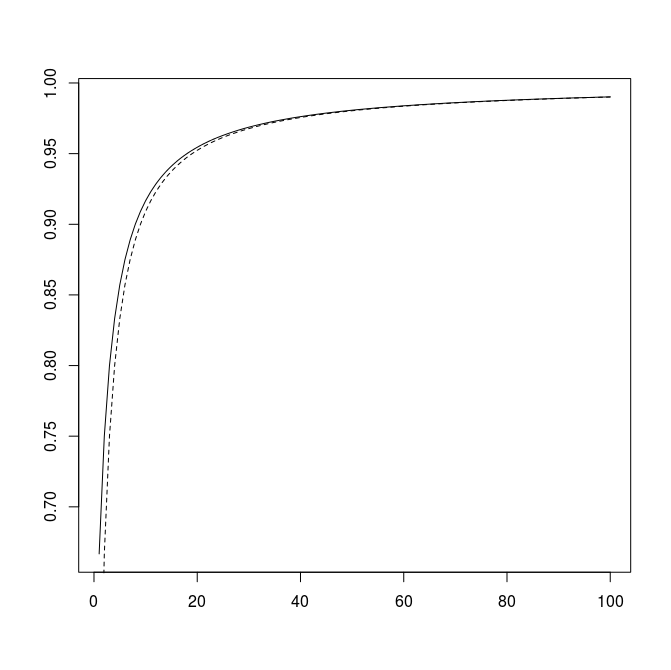}}
  \caption{\small{(A) Graph of $g_n(x)$ in the uniform case with $1 < n < 5$ and $x\in (0,1).$ (B) Entropy $H(g_n)$ (dashed line) and $\Delta_g(g_n)$ (line) for  $1 < n < 100.$}}
\end{figure}

\begin{figure}[!ht]
  \centering
   \subfloat[]{\label{exp_graph_f}\includegraphics[width=0.5\textwidth]{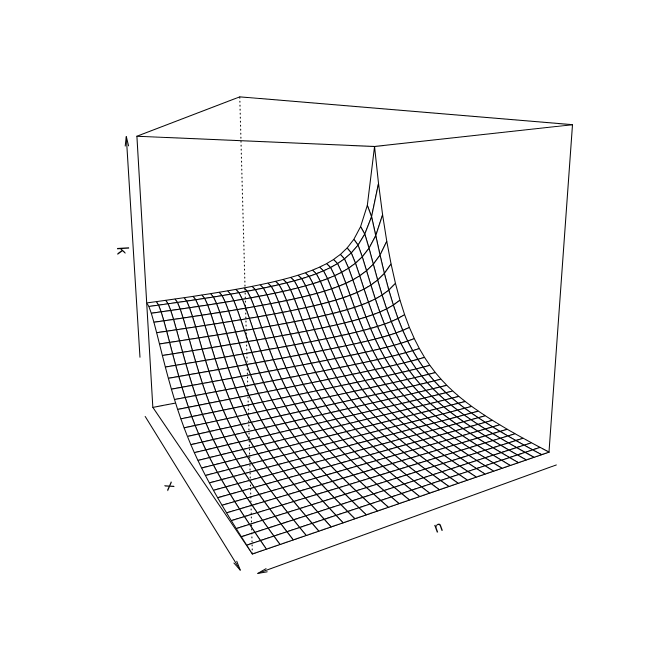}} 
  \subfloat[]{\label{exp_graph_ent}\includegraphics[width=0.5\textwidth]{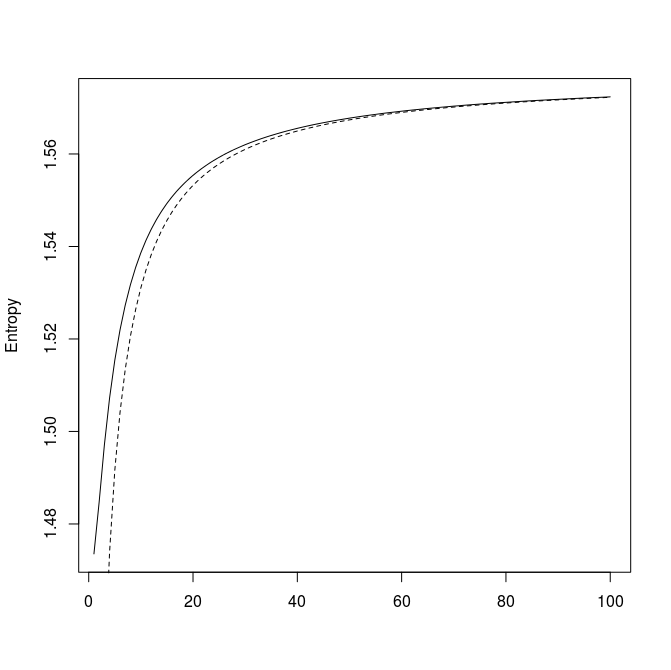}}                
  \caption{\small{(A) Graph of $g_n(x)$ in the exponential case with $1 < n < 5$ and $x\in (0,5).$ (B) Entropy $H(g_n)$ (dashed line) and $\Delta_g(g_n)$ (line) for  $1 < n < 100.$}}

  \centering
  \subfloat[]{\label{normal_graph_f}\includegraphics[width=0.5\textwidth]{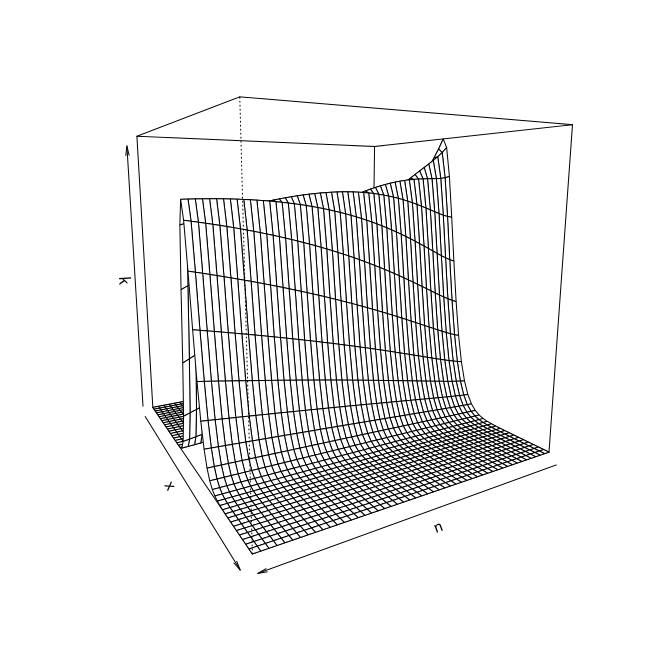}}
  \subfloat[]{\label{normal_graph_ent}\includegraphics[width=0.5\textwidth]{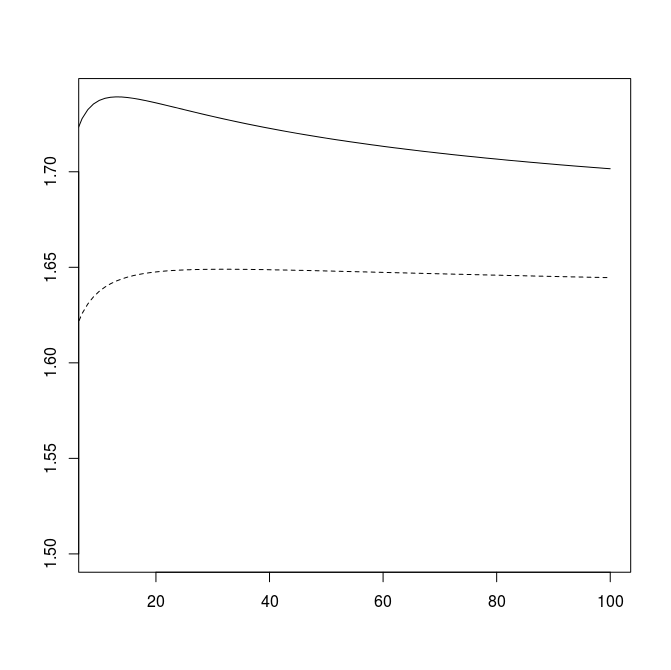}}
  \caption{\small{(A) Graph of $g_n(x)$ in the normal case ($2 < n < 10$ and $x\in (-10,10)$). (B) Entropy $H(g_n)$ (dashed line) and  $\Delta_g(g_n)$ (line) for $2 < n < 100.$} In (A), $g_n$ is increasing for fixed $x$ and varying $n.$}
\end{figure}


\begin{thebibliography}{}
\bibitem{Barron (1986)}{\sc Barron, A.R.,} (1986), {\it Entropy and the Central Limit Theorem,} {\it The Annals of Probability}, Vol. 14, No. 1, Pages 336-342.
\bibitem{Embrechts (1997)}{\sc Embrechts, P., Kl\"{u}ppelberg, C., Mikosch, T.,} (1997), {\it Modelling Extremal Events for Insurance and Finance,} Springer Verlag.
\bibitem{Galambos (1987)}{\sc Galambos, J.,} (1987), {\it The Asymptotic Theory of Extreme Order Statistics,} Krieger Pub. Co.
\bibitem{Gnedenko (1996)}{\sc Gnedenko, Boris V., Korolev, V.,} (1996), {\it Random Summation: Limit Theorems and Applications,} CRC Press, Boca Raton, Florida.
\bibitem{Johnson (2006)}{\sc Johnson, O.,} (2006), {\it Information Theory and the Central Limit Theorem,} Imperical College Press.
\bibitem{Resnick (1987)}{\sc Resnick, Sidney I.,} (1987), {\it Extreme Values, Regular Variation, and Point Processes,} Springer Verlag.
	\end{thebibliography}
\end{document}